\newtheorem{thm}{Theorem}[section]
\newtheorem{lem}[thm]{Lemma}
\newtheorem{prop}[thm]{Proposition}
\newtheorem{cor}[thm]{Corollary}
\newtheorem{prob}[thm]{Problem}
\theoremstyle{definition}
\newtheorem{defin}[thm]{Definition}
\newtheorem*{remark}{Remark}
\newcommand{\N}{\mathbb N}
\newcommand{\R}{\mathbb R}
\newcommand{\C}{\mathbb C}
\newcommand{\Q}{\mathbb Q}
\newcommand {\A}{\mathbb A}
\newcommand{\cE}{\mathcal{E}}
\newcommand{\vp}{\varepsilon}
\newcommand{\vpb}{\overline{\varepsilon}}
\newcommand{\xb}{\overline x}
\newcommand{\ib}{\overline\iota}
\newcommand{\rhob}{\overline \rho}
\newcommand{\zb}{\overline z}
\newcommand{\supp}{{\rm{supp}}}
\newcommand{\ran}{{\rm{ran}}}
\newcommand{\coo}{c_{00}}
\newcommand{\eqdist}{=_{\text{dist}}}
\def\hangbox to #1 #2{\vskip1pt\hangindent #1\noindent \hbox to
#1{#2}$\!\!$}
\newcommand{\tn}{|\!|\!|}
\newcommand{\Btn}{\Big|\!\Big|\!\Big|}
\newcommand{\ra}{\rangle}
\newcommand{\la}{\langle}
\newcommand{\nl}{\langle\hskip-3pt\langle}
\newcommand{\nr}{\rangle\hskip-3pt\rangle}
\newcommand{\Bnl}{\Big\langle\hskip-5pt\Big\langle}
\newcommand{\Bnr}{\Big\rangle\hskip-5pt\Big\rangle}
\newcommand{\kleq}{\!\leq\!}
\newcommand{\kge}{\!\ge\!}
\newcommand{\kle}{\!<\!}
\newcommand{\kin}{\!\in\!}
\newcommand{\st}{{\tilde s}}
\newcommand{\pt}{\tilde p}
\newcommand{\xt}{\tilde x}
\newcommand{\yt}{\tilde y}
\newcommand{\zt}{\tilde z}
\newcommand{\gt}{\tilde g}
\newcommand{\mt}{\tilde m}
\newcommand{\lt}{\tilde l}
\newcommand{\Et}{\tilde E}
\newcommand {\del}{ \; \big| \;}
\newcommand{\norm}[1]{\lVert#1\rVert}
\title{Subsequential minimality in  Gowers and Maurey spaces}
\author {Valentin Ferenczi and Thomas Schlumprecht}
\begin{document}
\thanks{The second author's research is partially supported by NSF grant DMS0856148.
A large part of the research for this paper was accomplished during a visit of the second author to the  Universidade de S\~ao Paulo which was funded by FAPESP grant 2010/17493-1.
}
\subjclass[2000]{46B03, 03E15}

\begin{abstract} We  define block sequences  $(x_n)$ in every block subspace of  a variant of the space of Gowers and Maurey
 so that the map $x_{2n-1}\mapsto x_{2n}  $ extends to an isomorphism. This implies the existence of a subsequentially minimal HI space, which solves a question in \cite{FR}. 

\end{abstract}

\maketitle

\tableofcontents

\section{Introduction}\label{intro}

We start this article by motivating our result with a presentation of W.T. Gowers's program
of classification of Banach spaces, and its recent developments along the lines of \cite{FR,FR2} and \cite{FG}.

\subsection{Gowers' classification program}

\

W.T. Gowers' fundamental results in geometry of Banach spaces \cite{g:hi,g:dicho} opened the way to a {\em loose classification of Banach spaces up to subspaces}, known as Gowers'  program. 
The aim of this program is to produce  a list of classes of infinite dimensional Banach spaces such that:

(a)  the classes are {\em hereditary}, i.e., stable under taking subspaces (or block subspaces),

(b) the classes are {\em inevitable}, i.e., every infinite dimensional Banach space contains a subspace in one of the classes,

(c) the classes are mutually disjoint,

(d) belonging to one class gives some information about the operators that may be defined on the space or on its subspaces.

\

We shall refer to such a list as a {\em list of  inevitable classes of
  Gowers}.
The reader interested in more details about  Gowers' program  may consult \cite{g:dicho} and \cite{FR}.
Let us just say that the class of spaces $c_0$ and $\ell_p$ is seen as the
  most regular class, and
  so, the objective this program really is the classification of those spaces which do not contain a copy of $c_0$ or $\ell_p$. 
We shall first  give a  summary of the classification obtained in \cite{FR}
and of the results of Gowers that led to it.

The first classification result of Gowers was motivated by his construction with B. Maurey of a hereditarily
indecomposable (or HI) space $GM$, i.e.,  a space such that no subspace may be
written as the direct sum of infinite dimensional subspaces \cite{GM}. The space $GM$ was the first known example of a space without an unconditional sequence.  Gowers then
proved his {\em first dichotomy}.

\begin{thm}[First dichotomy \cite{g:hi}]\label{T:0.1} Every Banach space contains either an HI subspace or  a subspace with an unconditional basis. \end{thm}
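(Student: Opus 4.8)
The plan is to deduce the dichotomy from Gowers' infinite Ramsey theorem for block sequences. First I would reduce to a space with a nice basis: since every infinite-dimensional Banach space contains a basic sequence, after passing to a subspace and renorming we may assume $X$ has a bimonotone Schauder basis $(e_n)$, and henceforth ``subspace'' will mean block subspace of $(e_n)$. The Ramsey ingredient I would use is the game form of Gowers' theorem: for any analytic set $\cA$ of normalized block sequences and any summable sequence $\Delta=(\delta_n)$ of positive reals, there is a block subspace $Y$ such that either no block sequence of $Y$ lies in $\cA$, or Player II has a winning strategy in the Gowers game on $Y$ that produces a block sequence in the $\Delta$-expansion $\cA_\Delta$. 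I would apply this to a set that encodes unconditionality.

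For a constant $C$, let $\cA_C$ be the open set of normalized block sequences $(x_i)$ that are \emph{not} $C$-suppression-unconditional, i.e.\ for which there are a finite set $A$ and scalars $(a_i)$ with $\bigl\|\sum_{i\in A}a_i x_i\bigr\| > C\bigl\|\sum_i a_i x_i\bigr\|$. Running Gowers' theorem on $\cA_C$ (with $\Delta$ small, depending on $C$): if for some $C$ the first alternative occurs, so that some block subspace $Y$ has all of its block sequences $C$-unconditional, then in particular the basis of $Y$ is $C$-unconditional and the unconditional conclusion holds. Otherwise the first alternative fails for every $C$, and then it fails inside every block subspace $W$ as well (a block subspace of $W$ is one of $X$); so by Gowers' theorem applied inside $W$ there is $Y\subseteq W$ on which II has a winning strategy forcing block sequences into $(\cA_C)_\Delta$. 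In short, under the negation of the unconditional alternative I would arrive at: for every $C$ and every $W$ there is $Y\subseteq W$ on which II can force a quantitative failure of $C$-unconditionality.

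I would then convert this into approximate indecomposability by the $\pm$-splitting trick. If a normalized block sequence $(x_i)$ witnesses failure of $C$-unconditionality through a sign pattern $(\vp_i)$ and scalars $(a_i)$, set $u=\sum_{\vp_i=+}a_i x_i$ and $v=\sum_{\vp_i=-}a_i x_i$; then $\|u-v\|>C\|u+v\|$, so after rescaling by $\max(\|u\|,\|v\|)$ one gets normalized vectors $u',v'$, living in the closed spans of disjoint subfamilies of $(x_i)$, with $\|u'+v'\|$ as small as desired when $C$ is large. Thus II's strategy lets us produce, inside any prescribed pair of block subspaces of the ambient subspace, normalized vectors that almost coincide up to sign. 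I would enumerate a countable list of tasks, each a triple of two block subspaces (from a fixed countable dense family of pairs) and a tolerance $\vp>0$, and run a fusion argument: a decreasing chain of block subspaces, each step shrinking so as to meet one task by producing the required pair of near-equal normalized vectors via the relevant strategy; the diagonal block subspace then has the property that any two of its block subspaces contain normalized vectors arbitrarily close, i.e.\ it is HI.

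The hard part will be this last step, since the implication ``no $C$-unconditional block subspace for any $C$'' $\Longrightarrow$ ``HI subspace'' is not formal. The $\pm$-split produces near-equal vectors only on \emph{interleaved} index sets, not on two arbitrarily prescribed block subspaces, so one must genuinely exploit the game --- II responding to I's block subspaces with block vectors --- to steer the witnessing sequence through the subspaces dictated by the current task, and the fusion has to be arranged so that all countably many tasks are realized inside one fixed block subspace. Controlling the $\Delta$-expansion (so the unconditionality constants really tend to $\infty$ while the perturbations stay summable) and tracking supports throughout is the technical core; the rest is bookkeeping around Gowers' Ramsey theorem.
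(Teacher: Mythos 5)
The paper does not prove this statement; it is quoted from Gowers \cite{g:hi} as background, so I am comparing your sketch against the known proofs. Your overall route --- Gowers' Ramsey theorem for block sequences in its game form, applied to a set encoding failure of unconditionality, followed by a diagonal/fusion argument --- is the standard modern derivation (the one in \cite{g:dicho}); Gowers' original argument in \cite{g:hi} used a more hands-on combinatorial lemma tailored to this dichotomy. So the strategy is sound in outline.

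However, there is a genuine gap at exactly the point you flag as ``the technical core,'' and it is not merely technical: with your choice of target set the argument does not close. If $\cA_C$ is the set of block sequences failing $C$-unconditionality via \emph{some} subset $A$ and scalars, then Player II's winning strategy only guarantees that the outcome $(x_i)$ admits \emph{some} witnessing set $A$; Player I has no control over $A$. Even if Player I alternates subspaces $U,V,U,V,\dots$ so that $x_{2i-1}$ lies near $U$ and $x_{2i}$ near $V$, the set $A$ may mix odd and even indices, and then your vectors $u=\sum_{i\in A}a_ix_i$ and $v=\sum_{i\notin A}a_ix_i$ lie in neither $U$ nor $V$. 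No fusion or bookkeeping over tasks repairs this, because the defect is in the single application of the strategy. The standard fix is to change the target set: apply the Ramsey theorem to the set of normalized block sequences $(y_j)$ for which some scalars satisfy $\bigl\|\sum_j(-1)^ja_jy_j\bigr\|>C\bigl\|\sum_ja_jy_j\bigr\|$, i.e.\ failure witnessed specifically by the \emph{alternating} sign pattern. One then needs the (easy but essential) lemma that this set is large in every block subspace under the hypothesis that no block subspace has an unconditional basis: given a witness $(x_i)$, $A$, $(a_i)$ with $\|\sum_{i\in A}a_ix_i\|>C\|\sum_ia_ix_i\|$, block the indices into maximal intervals alternately contained in $A$ and its complement; the resulting finite block sequence $y_1<\dots<y_m$ satisfies $\bigl\|\sum_j(-1)^jy_j\bigr\|=\|2u-z\|\ge(2C-1)\|z\|$ with $z=\sum_ia_ix_i$, so alternating unconditionality fails with essentially the same constant. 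With that set, Player I's alternating play does put $\sum_{j\ \mathrm{odd}}a_jy_j$ in (a perturbation of) $U$ and $\sum_{j\ \mathrm{even}}a_jy_j$ in $V$, and your $\pm$-splitting, the diagonalization over $C\to\infty$, and the fusion over a countable dense family of pairs of subspaces then go through as you describe. Without this replacement of the target set and the accompanying blocking lemma, the proposal is incomplete.
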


 These were the first two examples of
inevitable classes.

After  $GM$ was defined, Gowers   was  able to apply a criterion of  P.G. Casazza to prove that an unconditional Gowers-Maurey's space $G_u$ is  isomorphic to no proper subspace,  solving Banach's hyperplane problem \cite{g:hyperplanes}. Later on Gowers and Maurey proved that $GM$ also solves Banach's hyperplane problem, but as a consequence of general properties of HI spaces, based on Fredholm theory, rather than by applying the criterion.
Let us note in passing that our main result will  suggest that Casazza's criterion is indeed not satisfied in Gowers-Maurey's space.

  Gowers then 
refined the list by proving a {\em second dichotomy} as a consequence of his general Ramsey theorem for block sequences \cite{g:dicho}. A space is said to be {\em quasi-minimal} if any two subspaces  have further
subspaces which are isomorphic.

\begin{thm}[Second dichotomy \cite{g:dicho}] \label{T:0.2}Every Banach space contains a quasi-mini\-mal subspace or a subspace with a basis such that  no two disjointly supported block subspaces are
isomorphic. \end{thm}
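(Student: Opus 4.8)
The plan is to derive the second dichotomy from Gowers' Ramsey theorem for block sequences \cite{g:dicho}, in the same spirit in which the first dichotomy (Theorem~\ref{T:0.1}) is obtained from Gowers' game. Since every infinite-dimensional Banach space has a subspace with a Schauder basis, it suffices to work with a space $X$ carrying a fixed basis and to find a \emph{block} subspace of $X$ that is either quasi-minimal or has the property --- call it \emph{tightness by support} --- that no two disjointly supported block subspaces are isomorphic. The key step is to apply Gowers' dichotomy to the analytic set
\[
\mathcal{A}=\bigl\{(z_n):(z_n)\text{ is a block sequence of }X\text{ and }(z_{2n})_{n}\sim(z_{2n-1})_{n}\bigr\},
\]
where $\sim$ denotes equivalence of basic sequences (this set is $F_\sigma$, as $C$-equivalence is a closed condition for each constant $C$). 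Gowers' theorem then yields a block subspace $Y$ of $X$ of one of two kinds.

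In the first case, no block sequence of $Y$ belongs to $\mathcal{A}$, and I claim $Y$ is tight by support. Otherwise there are block subspaces $U=[u_i]$ and $V=[v_i]$ of $Y$ with disjoint supports and an isomorphism $T\colon U\to V$. A gliding-hump argument applied to the vectors $Tu_i$, followed by passing to subsequences, produces a subsequence $(u_{i_k})$ of $(u_i)$ and a block sequence $(v'_k)$ of $(v_i)$ with $v'_k$ a small perturbation of $Tu_{i_k}$, so that $(u_{i_k})\sim(v'_k)$; these two block sequences lie in $U$ and in $V$ respectively and have disjoint supports, and a further diagonal extraction interleaves them into a single block sequence $(z_n)$ of $Y$ with $z_{2n-1}$ taken from the $u$'s and $z_{2n}$ from the $v$'s. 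Then $(z_{2n-1})\sim(z_{2n})$, so $(z_n)\in\mathcal{A}$, a contradiction. Hence $Y$ is tight by support and we are done.

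In the second case, for a preassigned summable sequence $\bar\delta$ of positive reals the vector player has a winning strategy in the Gowers game played inside $Y$ to build a block sequence in the expansion $\mathcal{A}_{\bar\delta}$, and I claim $Y$ is quasi-minimal. Given arbitrary block subspaces $U,V$ of $Y$, let the subspace player play tails of $U$ on odd moves and tails of $V$ on even moves; the resulting play is a block sequence $(z_n)$ with $z_{2n-1}\in U$, $z_{2n}\in V$, and $(z_{2n})\sim_{\bar\delta}(z_{2n-1})$. Choosing $\bar\delta$ small relative to the basis constant --- and, if necessary, running the dichotomy separately for each equivalence constant $C$ so that $C$ is fixed before $\bar\delta$ --- a standard perturbation argument upgrades this to a genuine isomorphism $[z_{2n-1}]\cong[z_{2n}]$. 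Thus any two block subspaces of $Y$ have isomorphic further block subspaces (and the case of arbitrary subspaces reduces to this by first passing to block subspaces), so $Y$ is quasi-minimal.

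I expect the main obstacle to be the support bookkeeping in the first case: one must simultaneously preserve the equivalence, which ties the $u$-side and the $v$-side together index by index, and arrange that the chosen vectors line up as a legitimate block sequence, which disjointness of supports alone does not guarantee (the two supports may keep straddling one another). Handling this cleanly likely requires selecting the block vectors adaptively, exploiting weak-nullity-type control on the images $Tu_i$ after perturbation, rather than mechanically extracting subsequences from fixed bases. A secondary technical point is the precise form of Gowers' game and Ramsey theorem that is invoked, together with the passage from membership in $\mathcal{A}_{\bar\delta}$ back to an honest isomorphism.
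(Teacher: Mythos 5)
This theorem is quoted in the paper from \cite{g:dicho} without proof, so your proposal has to be measured against what is actually needed rather than against an argument in the text. Your Case 2 is fine: alternating the subspaces played by Player 1 and invoking the small-perturbation principle to pass from $\mathcal{A}_{\bar\delta}$ back to $\mathcal{A}$ is the standard way to extract quasi-minimality, and your worry about fixing the equivalence constant $C$ in advance is unnecessary (the perturbation argument gives $(z_n)\sim(y_n)$ for the \emph{whole} interleaved sequence, and the constant of the equivalence $(y_{2n})\sim(y_{2n-1})$ plays no role in it).

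The gap is in Case 1, and it is not the bookkeeping issue you can fix by choosing blocks adaptively: the implication ``$b(Y)\cap\mathcal{A}=\emptyset$ implies no two disjointly \emph{supported} block subspaces of $Y$ are isomorphic'' is false. The paper itself supplies a counterexample. Gowers' asymptotically unconditional HI space $G$ is tight by range (\cite{FR2}, quoted in Section 1.2), so no two disjointly \emph{ranged} block subspaces of $G$ are comparable; in particular no block sequence $(z_n)$ of $G$ satisfies $(z_{2n})\sim(z_{2n-1})$, i.e.\ $b(G)\cap\mathcal{A}=\emptyset$. On the other hand $G$ is HI, hence its basis is not unconditional, hence (by the observation in the paper that tightness by support forces unconditionality, together with the stated equivalence between tightness by support and the non-isomorphism of disjointly supported block subspaces) $G$ \emph{does} contain two disjointly supported isomorphic block subspaces. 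The concrete point of failure in your construction is the ``diagonal'' case: for an isomorphism $T\colon U\to V$ between disjointly supported block subspaces, every normalized block $w$ of $U$ may have $\mathrm{ran}(w)$ overlapping the essential range of $Tw$ (think of $e_{2i}\mapsto e_{2i-1}$), so no adaptive choice produces $u'_1<v'_1<u'_2<v'_2<\cdots$ with $v'_k\approx Tu'_k$; and since the equivalence $(u'_k)\sim(v'_k)$ is tied index-by-index, you cannot restore a legitimate block ordering by taking different subsequences on the two sides. What your argument genuinely proves is the weaker ``range'' alternative (no two intertwinable block subspaces are equivalent), which is the territory of the fourth dichotomy of \cite{FR}, not the second. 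Handling disjointly supported but range-overlapping isomorphisms is precisely where the real work in Gowers' proof lies, and it requires either a different analytic set or a separate argument for the diagonal case; the distinction between support and range that the paper emphasizes in Section 1.2 is exactly the distinction your Case 1 erases.
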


Finally,
H. Rosenthal had defined  a space to be {\em minimal} if it embeds into any of its subspaces. A
quasi minimal space which does not contain a minimal subspace is called {\em
strictly quasi minimal}, so Gowers again divided the class of quasi minimal
spaces into the class of strictly quasi minimal spaces and the class of minimal
spaces.

\

Gowers deduced from these dichotomies a list of  four inevitable classes of Banach spaces:
HI spaces, such as $GM$; spaces with bases such that no disjointly supported
subspaces are isomorphic, such as $G_u$;
strictly quasi minimal spaces with an unconditional basis, such as Tsirelson's space  $T$ \cite{tsi} ; and
finally, minimal spaces, such as $c_0$ or $\ell_p$, but also $T^*$,
Schlumprecht's space $S$ \cite{Sch1}, or its dual $S^*$ \cite{MP}.

\

In \cite{FR} several other  dichotomies for Banach spaces  were
obtained. The first one, called 
 the {\em third dichotomy}, refines the distinction between the minimality of Rosenthal and strict quasi-minimality. Given a Banach space $X$ with a basis $(e_n)$, a space $Y$ is {\em
tight} in $X$  if there is a sequence of successive
subsets $I_0<I_1<I_2<\ldots$ of $\N$, such that the support on $(e_n)$ of any isomorphic copy of $Y$ intersects all but finitely many of the $I_j$'s. In other words, for any infinite subset
$J$ of $\N$,
$$Y \not\sqsubseteq [e_i: i\in\N\setminus \bigcup_{j \in J} I_j],$$
where $\sqsubseteq$ means "embeds into".

The space $X$ itself is {\em tight} if  all subspaces $Y$ of $X$ are tight in $X$.

\

As observed in \cite{FG}, the tightness of a space $Y$ in $X$ allows the following characterization:
$Y$ is tight in $X$  if and only if
$$\big\{u \in 2^{\omega}: Y \sqsubseteq [e_n : n \in u]\big\}$$
is a meager subset of the Cantor space $2^{\omega}$.
Here we identify the set ot subsets of $\omega$ with the Cantor space
$2^{\omega}$, equipped with its usual topology.

 After  observing  that the tightness property is hereditary and incompatible with minimality, the authors of \cite{FR} prove:

\begin{thm}[Third dichotomy \cite{FR}]\label{T:0.3} Every Banach space contains a minimal subspace or a tight subspace.\end{thm}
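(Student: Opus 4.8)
The plan is to argue by contradiction: assume that $X$ has neither a minimal subspace nor a tight subspace. The first step is to pass to the combinatorics of block bases. By the standard small-perturbation principle every infinite-dimensional subspace of $X$ contains a further subspace $(1+\vp)$-isomorphic to a block subspace of $(e_n)$; hence minimality is detected on block subspaces --- a block subspace into which each of its own block subspaces embeds is already minimal --- and the tightness of a block subspace $Z$ inside a block subspace $W$ is unchanged by small block perturbations of $Z$ and by passing to a tail of $W$. So it is enough to produce a block subspace that is either block-minimal or tight, which will contradict our assumption.

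Next one unfolds ``no tight subspace''. For block subspaces $Z,W$ the set $\cA_{Z,W}=\{v\in 2^{\omega}:Z\sqsubseteq[w_n:n\in v]\}$ is analytic --- membership says that some sequence in $[w_n:n\in v]$ is equivalent to a basis of $Z$ --- hence has the Baire property, and by the reformulation recalled above $Z$ is tight in $W$ precisely when $\cA_{Z,W}$ is meager. Since no subspace of $X$ is tight, every block subspace $W$ contains a block subspace $Z$ for which $\cA_{Z,W}$ is non-meager, hence comeager in some basic clopen set; and since ``$Z\sqsubseteq[w_n:n\in v]$'' depends only on a tail of $v$, after passing to a tail of $W$ one may assume $\cA_{Z,W}$ is comeager in all of $2^{\omega}$.

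The heart of the proof is an application of Gowers' Ramsey theorem \cite{g:dicho}. Fix a countable collection $\cF$ of block subspaces (rational block bases), dense enough that every block subspace of $X$ is $(1+\vp)$-isomorphic to a member of $\cF$, and for $Y\in\cF$ let $\sigma_Y$ be a suitable analytic set of normalised block sequences $(x_i)$ that detects the embeddability into $[x_i]$ of $Y$, or of a finite-codimensional subspace of $Y$. Gowers' theorem, applied to $\sigma_Y$ inside a given block subspace, yields one of two alternatives: either (a) a block subspace which is \emph{$Y$-rejecting}, in the sense that no finite-codimensional subspace of $Y$ embeds into any of its block subspaces; or (b) a block subspace $W$ on which the vector player has a strategy in the Gowers game forcing the run into a small expansion of $\sigma_Y$. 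In alternative (b), by letting the subspace player confine all her moves to an arbitrary block subspace $V\subseteq W$ and summing away the perturbations, one obtains (after stabilising the isomorphism type if necessary) a fixed space $Y_0$ --- a finite-codimensional subspace of some member of $\cF$ --- with $Y_0\sqsubseteq V$ for every block subspace $V$ of $W$; and then $W$ has a minimal subspace, by the following observation: choosing a block subspace $V_0\subseteq W$ with $V_0\sqsubseteq Y_0$ (perturb a copy of $Y_0$ inside $W$) one has $Y_0\sqsubseteq V_0\sqsubseteq Y_0$, so that for any subspace $V_1\subseteq V_0$ and any block subspace $V_2\subseteq V_1$ of $W$ one gets $V_0\sqsubseteq Y_0\sqsubseteq V_2\sqsubseteq V_1$, whence $V_0$ is minimal. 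Thus alternative (b), for any $Y\in\cF$ inside any block subspace, already contradicts the hypothesis.

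Hence alternative (a) holds for every $Y\in\cF$ inside every block subspace, and it remains to assemble a tight block subspace from this. Enumerating the pairs $(k,l)$, one runs an interlaced fusion producing a decreasing chain of block subspaces and a diagonal block sequence $(w_j)$: when $(k,l)$ comes up, pass --- using alternative (a) for $Y_k$ --- to a $Y_k$-rejecting block subspace, choose the next diagonal vectors inside it, and record an interval $I^k_l$ of indices marking this passage. Put $W=[w_j]$. If $v\subseteq\N$ omits $I^k_l$ for infinitely many $l$, then fixing one such $l$ far out splits $[w_j:j\in v]$ as $F\oplus Z'$ with $F$ finite-dimensional and $Z'$ a block subspace of the $Y_k$-rejecting space installed at stage $(k,l)$; since an embedding of $Y_k$ into $F\oplus Z'$ forces a finite-codimensional subspace of $Y_k$ into $Z'$, the rejecting property gives $Y_k\not\sqsubseteq[w_j:j\in v]$, so $Y_k$ is tight in $W$. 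As $\cF$ is dense, every block subspace of $W$ is then tight in $W$, i.e.\ $W$ is a tight subspace --- the final contradiction. The most delicate point is precisely this last construction: one must see to it that each $Y_k$ receives its entire interval sequence $(I^k_l)_l$ although each step handles only finitely much, keep the diagonal block sequence admissible, and have chosen $\sigma_Y$ so that the rejecting alternative survives adjoining the finite-dimensional overhang created by deleting an interval while the other alternative still yields an honest, single embedded space after the perturbations inherent in Gowers' theorem; getting the first-step reductions precise enough that tightness genuinely transfers is the remaining piece of care.
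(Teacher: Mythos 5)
This theorem is not proved in the paper: it is imported from \cite{FR} and used as a black box, so there is no internal proof to compare against. Your overall architecture --- reduce to block subspaces; for each candidate $Y$ extract, via Gowers' Ramsey theorem, either a ``$Y$-rejecting'' block subspace or a block subspace $W$ all of whose block subspaces contain a copy of (a finite-codimensional subspace of) $Y$, the latter branch yielding a minimal subspace; then fuse rejecting subspaces along an enumeration of pairs to manufacture the intervals witnessing tightness --- is essentially the architecture of the argument in \cite{FR}, and your final fusion paragraph is sound as described. But two steps have genuine gaps. The first is the assertion that ``$Z\sqsubseteq[w_n:n\in v]$ depends only on a tail of $v$.'' Deleting finitely many indices from $v$ replaces $[w_n:n\in v]$ by a finite-codimensional subspace, and a Banach space need not embed into its finite-codimensional subspaces; the spaces for which this dichotomy matters (HI spaces, the subject of this paper) are precisely spaces not isomorphic to their hyperplanes. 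The set $\cA_{Z,W}$ is closed under supersets but not under finite deletions, so ``non-meager in some basic clopen set, hence comeager'' does not follow from tail-invariance; what survives is only that a finite-codimensional subspace of $Z$ embeds into $[w_n:n\in v]$ for comeager many $v$. (As it happens, your second paragraph is never invoked in the rest of your argument, so this error does not propagate --- but it is the same finite-codimension phenomenon that undermines the next step.)

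The more serious gap is the phrase ``after stabilising the isomorphism type if necessary'' in the Player~2 branch. Distinct subspaces $V$ fed in by Player~1 give distinct runs of the strategy, hence embeddings into $V$ of finite-codimensional subspaces of $Y$ of possibly unbounded codimension (and with unbounded constants, because of the $\Delta$-expansion in Theorem~\ref{ramsey}). Without a uniform bound on the codimension you obtain only that any two block subspaces of $W$ have isomorphic further subspaces --- quasi-minimality of $W$ --- and there is no single space $Y_0$ for which the chain $Y_0\sqsubseteq V_0\sqsubseteq Y_0$ can be launched; note that $Y^{(k_0)}$ need not embed into $Y^{(k_1)}$ for $k_1>k_0$, again by the hyperplane phenomenon. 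The repair is to apply Gowers' theorem separately to the analytic sets $\sigma_Y^{k,K}$ recording ``the codimension-$k$ subspace of $Y$ embeds with constant $K$ into $[x_i]$'': the winning branch for a fixed $(k,K)$ then produces one fixed space embedding into every block subspace of some $W$, hence a minimal subspace, while the rejecting branch must be diagonalised over all $(k,K)$ before entering the fusion. This bookkeeping, together with making the tightness transfer of your first paragraph precise, is exactly the content of the proof in \cite{FR} that your sketch defers.
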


Special types of tightness may be defined
 according to the way the $I_n$'s may be chosen in function of $Y$. It is observed in \cite{FR} that the actual known examples of tight spaces satisfy  one of two stronger forms of tightness, called {\em by range}, and  {\em with constants}. Thus e.g. Gowers unconditional space $G_u$ is tight by range, and Tsirelson's space $T$ is tight with constants, see also \cite{FR2} for other examples.

We shall be mainly interested in tightness by range, which we define in the next subsection. We refer to the end of the paper
for definitions and comments about tightness with constants.

\subsection{Ranges and supports}

\

The following distinction is essential. If $X$ is a space with a basis $(e_i)_i$,
then the definition of the {\em support} ${\rm supp}\ x$ of a vector $x$ is well-known: it is the set
$\{i \in \N: x_i \neq 0\}$,
where $x=\sum_{i=0}^{\infty} x_i e_i$.
On the other hand the {\em range}, ${\rm ran} \ x$, of  $x$ is the smallest interval of integers
containing its support. 
So of course, having finite range and having finite support are the same, but the range is always an interval of integers, while the support may be an arbitrary subset of $\N$.

If $Y=[y_n, n \in \N]$ is a block subspace of $X$, then the support of $Y$ is $\cup_{n \in \N} {\rm supp}\ y_n$, and the range of $Y$ is
$\cup_{n \in \N}{\rm ran}\ y_n$. 

Let us now recall the criterion of Casazza, which appears in \cite{g:hyperplanes}. Two basic sequences $(x_n)_{n \in \N}$ and $(y_n)_{n \in \N}$  are said to be equivalent if the map $x_n \mapsto y_n$ extends to an isomorphism of $[x_n, n \in \N]$ onto $[y_n, n \in \N]$.

\begin{prop} \cite{C}
Let $X$ be a Banach space with a basis. Assume that for any block sequence $(x_n)$ in $X$, $(x_{2n})$ is not equivalent to $(x_{2n+1})$. Then $X$ is isomorphic to no proper subspace.
\end{prop}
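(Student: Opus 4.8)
The plan is to argue by contraposition: assuming that $X$ is isomorphic to a proper subspace, I will build, out of that isomorphism, a normalized block sequence $(x_n)$ of $(e_n)$ for which $x_{2n-1}\mapsto x_{2n}$ extends to an isomorphism of $[x_{2n-1}:n]$ onto $[x_{2n}:n]$, contradicting the hypothesis. So fix a closed subspace $Z\subsetneq X$ and an isomorphism $T\colon X\to Z$, viewed as an into-isomorphism $T\colon X\to X$ with $\ran T=Z$, and put $K:=\max(\|T\|,\|T^{-1}\|)$ (so $K\ge1$) and $K_b:=\sup_m\|P_m\|$, where $P_m$ is the $m$-th partial-sum projection of $(e_n)$.

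The construction will be a gliding-hump argument producing a normalized block sequence $(x_n)$ and positive reals $\varepsilon_n$ with $\sum_n\varepsilon_n<(4KK_b)^{-1}$ such that $\|Tx_{2n-1}-x_{2n}\|\le\varepsilon_n$ for every $n$. Granting this, the rest is soft. Being a block sequence of $(e_n)$, $(x_n)$ is basic with basis constant $\le K_b$, hence so is its subsequence $(x_{2n-1})$, and for scalars $(a_n)$ and $y:=\sum_n a_nx_{2n-1}$ one has $\sup_n|a_n|\le 2K_b\|y\|$. Then
\[
\Big\|\sum_n a_nx_{2n}-Ty\Big\|=\Big\|\sum_n a_n\bigl(x_{2n}-Tx_{2n-1}\bigr)\Big\|\le\Big(\sup_n|a_n|\Big)\sum_n\varepsilon_n\le 2K_b\Big(\sum_n\varepsilon_n\Big)\|y\|,
\]
and combining this with $K^{-1}\|y\|\le\|Ty\|\le K\|y\|$ and $\sum_n\varepsilon_n<(4KK_b)^{-1}$ yields $(2K)^{-1}\|y\|\le\bigl\|\sum_n a_nx_{2n}\bigr\|\le 2K\|y\|$. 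Hence $x_{2n-1}\mapsto x_{2n}$ extends to an isomorphism of constant at most $2K$, i.e.\ $(x_{2n-1})\sim(x_{2n})$, which is the sought contradiction.

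The step I expect to be the real obstacle is the gliding-hump construction, and it is there that the properness of $Z$ must genuinely be used — the non-surjectivity cannot be dispensed with, since a surjective isomorphism such as the identity yields no information. The subtlety is that once $x_{2n-1}$ has been chosen, $Tx_{2n-1}$ is a fixed vector whose support may carry a definite amount of mass far to the left of $\ran x_{2n-1}$, so one cannot simply take the next block vector to approximate $Tx_{2n-1}$; the block vectors must be selected so that their $T$-images are, up to the prescribed error $\varepsilon_n$, supported to the right of them, and this property has to be made to propagate along the induction. The resources for arranging this are: that $T$ restricted to any tail $[e_i:i\ge N]$ is still an isomorphic embedding, so that its range is infinite-dimensional and therefore meets every further tail $[e_i:i\ge M]$ in an infinite-dimensional subspace — which supplies block vectors whose $T$-image is supported as far out as one pleases; that the non-surjectivity of $T$, together with the density of finitely supported vectors, provides at each stage a block vector that stays $\delta$-separated from $Z$, which is the slack that keeps the induction from stalling (passing if necessary to a power $T^k$ of $T$, whose range $T^k(X)\subseteq Z$ has codimension $k\cdot\operatorname{codim}Z$, one may arrange this slack to be available at every stage); and the usual small-perturbation bookkeeping — truncate $Tx_{2n-1}$ to a long initial segment, discard its small tail, and push $\supp x_{2n}$ far enough out to carry what remains. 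Making these ingredients interact so that $\|Tx_{2n-1}-x_{2n}\|\le\varepsilon_n$ at every step while the $\varepsilon_n$ remain summably small is where the real work of the proof lies; the contradiction set-up of the first paragraph and the perturbation estimate of the second are routine.
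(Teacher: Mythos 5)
Your proposal does not actually prove the statement: the entire difficulty of Casazza's argument is concentrated in the inductive step that you describe in your third paragraph and then explicitly leave unexecuted, and the ``resources'' you list do not combine to give it. (For what it is worth, the paper itself offers no proof either --- it cites Casazza's unpublished notes --- so I am judging the proposal on its own merits.) The contrapositive set-up and the perturbation estimate of your second paragraph are indeed routine and correct (modulo the small point that $x_{2n}$ should be the normalization of a truncation of $Tx_{2n-1}$, so the error $\varepsilon_n$ has to absorb a renormalization). But everything hinges on the following claim, which you never establish: \emph{for every $m$ and $\varepsilon>0$ there is a normalized finitely supported $u$ with $\supp u\subseteq(m,\infty)$ such that $\|P_{\max\supp u}(Tu)\|<\varepsilon$}, i.e.\ $Tu$ is essentially supported to the right of $u$. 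Your first resource gives something genuinely weaker: for any prescribed $N$ and $M$, the finite-codimensional subspace $\ker(P_MT)\cap[e_i:i>N]$ is infinite dimensional, so there are normalized $u$ supported past $N$ with $Tu$ supported past $M$. The circularity is that $M$ must be fixed \emph{before} $u$ is chosen, whereas the claim requires $Tu$ to clear $\max\supp u$, a quantity you only learn \emph{after} choosing $u$; nothing you write breaks this circle. Your second resource (a block vector $\delta$-separated from $Y=T(X)$) is where properness enters, but you never say how separation from $Y$ forces $T$ to push some far-out vector forward, and this implication is not obvious.

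That this claim is the real theorem, and not bookkeeping, is shown by examples. For a surjective isomorphism (e.g.\ the identity) the claim is false, so properness must be used in an essential, quantitative way. More tellingly, take $X=\ell_2$ and $Te_n=\alpha e_n+\beta e_{2n}$ with $\alpha^2+\beta^2=1$: when $\alpha>\beta$ this is an onto isomorphism and the claim fails; when $\alpha<\beta$ the range is proper and the claim holds, but only for vectors $u$ supported along long segments of the orbits $k,2k,4k,\dots$ with geometrically tuned coefficients --- no ``generic'' block works, and a naive gliding hump never finds such $u$. So the inductive step requires a genuine argument (in effect a Fredholm/index-type or orbit-following argument exploiting that an into-isomorphism with proper closed range must ``move some far-out mass forward''), and until that argument is supplied the proof is incomplete. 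I would also flag that even granting the claim, one must check it survives the additional constraint $\min\supp u>\max\supp v_{n-1}$ imposed by the induction; that part is fine, but only once the claim itself is proved.
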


The criterion of Casazza leads to studying the possible isomorphisms between disjointly supported or disjointly ranged subspaces. As proved in \cite{FR}, this turns out to have an essential connection with the notion of tightness.
In what follows we shall say that two spaces are {\em comparable} if one embeds into the other.

\

If no two disjointly supported block-subspaces are isomorphic, then equivalently no two such subspaces are comparable. This is also equivalent to saying that 
 for
every block   subspace $Y$, spanned by a block sequence $(y_n)$,  the sequence of successive subsets
$I_0<I_1<\ldots$ of $\N$ witnessing the tightness of $Y$ in $(e_n)$ may be
defined by $I_k={\rm supp}\ y_k$ for each $k$.
When this happens it is said that $X$ is {\em  tight by support} \cite{FR}. So Gowers' second dichotomy
may be interpreted as a dichotomy between a form of tightness and a form of minimality, and $G_u$ is tight by support.

If now for
every block subspace $Y=[y_n]$, the sequence of successive subsets
$I_0<I_1<\ldots$ of $\N$ witnessing the tightness of $Y$ in $(e_n)$ may be
defined by $I_k={\rm ran}\ y_k$ for each $k$, then $X$ is said to be {\em tight by range}. This is equivalent to no two
block subspaces with disjoint ranges being comparable, a property which is formally weaker than tightness by support.
Note that the criterion of Casazza applies to prove that a space which is tight by range cannot be isomorphic to its proper subspaces.

The distinction between range and support is relevant here. While it is easy to check that
a basis which is tight by support must be unconditional, it is proved in \cite{FR2} that HI spaces may be tight by range; this is the case of an asymptotically unconditional and HI  Gowers-Maurey's space $G$,  due to Gowers \cite{g:asymptotic}.

In \cite{FR} it was proved  that there  also exists a dichotomy relative to
tightness by range.  The authors define a space $X$ with a
basis $(x_n)$  to be {\em subsequentially minimal} if every subspace of  $X$
contains an isomorphic copy of a subsequence of $(x_n)$. 
Tsirelson's space $T$ is the classical example of subsequentially minimal, non-minimal space.

\begin{thm}[Fourth dichotomy \cite{FR}]\label{main2}
Any Banach space contains a
subspace with a basis which is either tight by range or subsequentially minimal.
\end{thm}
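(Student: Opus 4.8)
The plan is to deduce the fourth dichotomy from W.\,T. Gowers' Ramsey theorem for block sequences \cite{g:dicho}, in the spirit of the proofs of the first two dichotomies: isolate a suitable analytic set of block sequences, apply Gowers' theorem, and match its two alternatives to the two sides of the dichotomy. First I would reduce to a convenient setting. Since every infinite dimensional Banach space has a subspace with a Schauder basis, we may assume $X=[e_n]$; and by the standard small perturbation principle every infinite dimensional subspace of $X$ contains, for each $\vp>0$, a normalized basic sequence $(1+\vp)$-equivalent to a normalized block sequence of $(e_n)$, so in both definitions --- ``subsequentially minimal'' and ``tight by range'' --- it suffices to quantify over \emph{block} subspaces of $(e_n)$. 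Thus the goal is to produce a block subspace $Y=[y_n]$ that is either tight by range with respect to $(y_n)$ or subsequentially minimal.

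For a block subspace $Y=[y_n]$, let
\[
\cA_Y=\big\{(z_n)\ \text{normalized block sequence of}\ Y\ :\ [z_n]\ \text{contains an isomorphic copy of a subsequence of}\ (y_n)\big\}.
\]
Coding isomorphisms by rational matrices, $\cA_Y$ is analytic in the Polish space of normalized block sequences of $Y$, so Gowers' block Ramsey dichotomy applies and returns one of two alternatives. \textbf{(a)} Some block subspace $Y'\subseteq Y$ has no block sequence in $\cA_Y$ --- equivalently, no block subspace of $Y'$ contains a copy of a subsequence of $(y_n)$. \textbf{(b)} In Gowers' game on $Y$ the ``subsequence player'' has a strategy producing, against any opponent, a block sequence in a prescribed small $\Delta$-expansion of $\cA_Y$; unwinding this strategy and absorbing the perturbation by a gliding-hump argument yields a block subspace $Y''\subseteq Y$ that is subsequentially minimal (every block subspace of $Y''$ contains a copy of a subsequence of $(y''_n)$, hence also of $(y_n)$ by the reduction above).

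If alternative \textbf{(b)} ever occurs, we are done. Otherwise \textbf{(a)} holds for every block subspace we meet, and I would build a fusion sequence $X=Y_0\supseteq Y_1\supseteq Y_2\supseteq\cdots$, obtaining $Y_{k+1}$ by applying \textbf{(a)} to a tail of $Y_k$, committing to one further basis vector $w_k$ at stage $k$, and letting the ranges grow quickly; let $W=[w_n]$ be the diagonal subspace. To check that $W$ is tight by range I would use the reformulation recalled above \cite{FR}: it is enough that no two block subspaces $U,V$ of $W$ with $\ran\,U\cap\ran\,V=\emptyset$ are comparable. If $U\sqsubseteq V$, a perturbation turns the embedding into a block map, so $U$ is isomorphic to a block subspace $U'$ of $V$ with $\ran\,U'\subseteq\ran\,V$; then $U$ and $U'$ are isomorphic block subspaces of $W$ with disjoint ranges, and pushing this configuration down to the stage of the fusion at which their ranges first separate contradicts that \textbf{(a)} --- avoidance of copies of subsequences of the basis frozen there --- was used at that stage. (Alternatively one may run Gowers' theorem on the set of block sequences $(z_n)$ of $Y$ whose span is isomorphic to a disjointly ranged block subspace of $Y$; then \textbf{(a)} yields a tight-by-range block subspace directly, and the work moves to extracting subsequential minimality from the pervasiveness of disjointly ranged comparable pairs in \textbf{(b)}.)

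The hard part is this coupling of combinatorics with the metric structure of embeddings. Gowers' theorem delivers at each finite stage only a tail \emph{avoidance} statement, while ``tight by range'' is a \emph{global} assertion quantifying over all block subspaces, all infinite families of ranges, and all isomorphisms of arbitrary norm; arranging the fusion so that a single hypothetical embedding violating tightness by range is localized to one stage, and controlling the perturbation constants accumulated along the infinite fusion, is where the real work lies. One also has to verify carefully that $\cA_Y$ is genuinely analytic and that the $\Delta$-expansion step is legitimate, so that Gowers' Ramsey theorem truly applies; and, on the \textbf{(b)} side, that unwinding the strategy really produces a \emph{basis} of a block subspace rather than merely a block sequence along one branch.
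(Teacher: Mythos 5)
A preliminary remark: the paper does not prove Theorem \ref{main2} at all --- it is quoted from \cite{FR}, and the authors explicitly note (just before the proof of Theorem \ref{goma}) that the demonstration of the fourth dichotomy is long, giving instead only a sketch of the shorter variant they actually need. So your proposal must be measured against \cite{FR} and against that sketch.

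Your main route has a genuine gap on the ``tight by range'' side. With $\cA_Y$ defined as the set of block sequences whose span contains a copy of a subsequence of $(y_n)$, alternative (a) of Gowers' theorem yields a block subspace $Y'$ no subspace of which contains a copy of a subsequence of $(y_n)$. That is an avoidance statement about subsequences of one fixed basis, and it says nothing about whether two \emph{disjointly ranged} block subspaces of $Y'$ can be comparable --- which is exactly what tightness by range forbids (the paper recalls this reformulation in Section 1.2). A space can avoid every subsequence of $(y_n)$ and still be riddled with pairs of isomorphic disjointly ranged block subspaces. Your fusion argument does not repair this: a hypothetical embedding $U\sqsubseteq V$ between disjointly ranged subspaces of the diagonal space never manufactures a copy of a subsequence of any of the bases frozen along the fusion, so there is no stage at which a contradiction with the avoidance statement arises. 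Subsequence-avoidance is the currency of the \emph{third} dichotomy (tight versus minimal), not the fourth. Your parenthetical alternative --- applying Gowers' theorem to the set of block sequences witnessing comparability of disjointly ranged subspaces --- is the correct starting point and is essentially what \cite{FR} does: negating tightness by range in every subspace saturates the space with such pairs, so Player~2 acquires a winning strategy to produce them.

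There is also a mismatch on the (b) side of your main route: the strategy delivers, inside an arbitrary block subspace of $Y''$, a copy of a subsequence of $(y_n)$, whereas subsequential minimality of $Y''$ requires a copy of a subsequence of $Y''$'s \emph{own} basis; your phrase ``hence also of $(y_n)$'' runs this implication in the wrong direction. The repair is the diagonalization over the countable strategy tree carried out in the proof of Theorem \ref{goma} (and in \cite{FR}, Lemma 6.4 and Proposition 6.5): one constructs a single block sequence $(v_n)$ extending every node of the tree and arranges the strategy to output interleaved pairs $y_1<f_1<y_2<f_2<\cdots$ with $(f_k)$ a subsequence of $(v_n)$ and $[y_k : k\in\N]$ comparable to $[f_k: k\in\N]$; the subsequence of the final basis comes from $(v_n)$ itself. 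You correctly sense that the real work is in coupling the combinatorics with the embeddings, but that work sits on the (b) side of the comparability set, not on the (a) side of the subsequence-avoidance set where you placed it.
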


The second case in Theorem \ref{main2}
may be improved to the following hereditary property of a basis $(x_n)$, that is called {\em
sequential minimality}: $(x_n)$ is quasi minimal and  every block sequence of
$[x_n]$ has a subsequentially minimal block sequence.

\subsection{The list of 6 inevitable classes}

 The first four dichotomies and the interdependence of the properties involved can be
visualized in the following diagram.

\[
\begin{tabular}{ccc}

Unconditional basis&$**\textrm{ 1st dichotomy }**$& Hereditarily indecomposable\\

$\Uparrow$&         &$\Downarrow$\\

Tight by support & $**\textrm{ 2nd dichotomy }**$ & Quasi minimal  \\

$\Downarrow$&&$\Uparrow$\\

Tight by range    &      $**\textrm{ 4th dichotomy }**$          & Sequentially minimal     \\

$\Downarrow$&&$\Uparrow$\\

Tight&                  $**\textrm{ 3rd dichotomy }**$              & Minimal\\

\end{tabular}
\]

\

The easy observation that HI spaces are  quasi-minimal is due to Gowers (see subsection \ref{SubS:1.5}).
On the other hand it was shown in \cite[Corollary 19]{GM}  and \cite[Theorem 21]{GM} that an HI space  cannot be isomorphic to any proper subspace.
This implies that an HI space cannot contain a minimal subspace.

Therefore by the third dichotomy, every HI space must contain a tight subspace, but it is unknown whether every HI space with a basis must itself be tight.
\

 Combining the four dichotomies and the relations between them, the following
list of 6 classes of Banach spaces contained in any Banach space is obtained
in \cite{FR}:

\begin{thm}\cite{FR}\label{final} Any infinite dimensional Banach space contains a subspace of one of
the types listed in the following chart:

\

\begin{center}
  \begin{tabular}{|l|l|l|}

    \hline

    Type       & Properties                              & Examples                                  \\

    \hline

    (1)                     & HI, tight by range    &   $G, G^*$ \\

    \hline

    (2) & HI, tight, sequentially minimal     &   $ {  \mathcal  X_{GM}}$ \\

    \hline

    (3) & tight by support  &   $G_u, G_u^*$, $X_u$,   $X_u^*$, $X_{abr}$  \\

    \hline

(4) & unconditional basis, tight by range, & \\
 & quasi minimal & ? \\

\hline

(5) & unconditional basis, tight,  & $T$, $T^{(p)}$ \\

    & sequentially minimal & \\

\hline

(6) & unconditional basis, minimal & $S,S^*$,  $T^*$, $c_0$, $\ell_p$\\

\hline
  \end{tabular}

\end{center}
\end{thm}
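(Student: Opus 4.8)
The plan is to derive Theorem \ref{final} as a formal consequence of the four dichotomies (Theorems \ref{T:0.1}, \ref{T:0.2}, \ref{main2}, \ref{T:0.3}) together with the implications recorded in the diagram above, by a finite case analysis. First I would fix an infinite dimensional Banach space $X$ and pass to a subspace with a basis, which is legitimate by a standard application of the Bessaga--Pe{\l}czy\'nski selection principle (or simply by Mazur's theorem), so that all four dichotomies apply. Then I would apply the first dichotomy to split into two cases: $X$ has a subspace with an unconditional basis, or $X$ has an HI subspace.

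In the unconditional case, I would apply the fourth dichotomy to the unconditional subspace, obtaining a further subspace that is either tight by range or sequentially minimal; note that since subspaces of a space with unconditional basis retain an unconditional basis, both sub-cases keep unconditionality. If tight by range, apply the third dichotomy: tight-by-range implies tight (as shown in the diagram), so the relevant alternative from the third dichotomy is only refined, not contradicted; here one must check that tightness-by-range is hereditary so that the class is genuinely stable, giving either type (4) after recording quasi-minimality --- which follows since tight by range does not preclude quasi-minimality and in fact the fourth dichotomy's ``tight by range'' side does not assert non-quasi-minimality --- or, if the space turns out also to be non-tight-by-range on some subspace, one re-enters the tree. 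If sequentially minimal, apply the third dichotomy: either minimal (type (6)) or tight, and tightness combined with sequential minimality and an unconditional basis is type (5). The HI case is symmetric: HI implies quasi-minimal (the easy observation attributed to Gowers, see \S\ref{SubS:1.5}), and HI spaces are never minimal since by \cite[Corollary 19]{GM} and \cite[Theorem 21]{GM} an HI space embeds in no proper subspace; so the third dichotomy forces a tight HI subspace, and then the fourth dichotomy splits this into tight by range (type (1)) or sequentially minimal, the latter giving the tight, sequentially minimal HI space of type (2).

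The mechanics that make the case analysis terminate are the hereditarity statements: each property appearing in the chart (HI, unconditional basis, tight, tight by range, sequentially minimal, quasi-minimal, minimal) must be verified to pass to the appropriate kind of subspace, so that after finitely many applications of the dichotomies one lands in a subspace simultaneously satisfying all the constraints of one row. Concretely, after applying the first dichotomy once, the fourth once, and the third once (in the order dictated by the diagram's arrows), every branch of the binary tree terminates at one of the six leaves; the arrows of the diagram guarantee that no branch produces an inconsistent list of properties (e.g. the arrow ``tight by range $\Downarrow$ tight'' means the third dichotomy applied after the fourth cannot send a tight-by-range space to the ``minimal'' side, since tight and minimal are incompatible). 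Matching the examples column is then a separate bookkeeping task, but for the existence statement of Theorem \ref{final} it suffices to exhibit the six leaves.

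The main obstacle I anticipate is not any single hard estimate but rather the careful verification that the dichotomies can be chained: one must confirm that applying a later dichotomy to the subspace produced by an earlier one does not destroy the property already gained, which reduces precisely to the hereditarity of each property and to the incompatibilities encoded by the diagram (most delicately, that minimality is incompatible with tightness, hence a fortiori with tight by support and tight by range, and that an HI space cannot be minimal). A secondary point requiring care is the passage, in the second case of the fourth dichotomy, from ``subsequentially minimal'' to the stronger hereditary ``sequentially minimal'' property noted after Theorem \ref{main2}; I would invoke that improvement directly so that the class in rows (2) and (5) is genuinely hereditary and the chart is self-consistent under taking further subspaces.
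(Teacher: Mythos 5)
Your overall strategy---chaining the dichotomies and tracking hereditarity---is the right one (the paper itself gives no proof, citing \cite{FR}, and the intended argument is exactly a case analysis along the diagram). The HI branch of your argument is correct: HI contains no minimal subspace, so the third dichotomy yields a tight HI subspace, and the fourth dichotomy then splits this into types (1) and (2), using that tightness is hereditary.

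However, there is a genuine gap in the unconditional branch: you never invoke the \emph{second} dichotomy, and as a consequence (i) the leaf of type (3), ``tight by support,'' never appears in your case tree, and (ii) the quasi-minimality required in type (4) is not established. Your justification---``tight by range does not preclude quasi-minimality''---is not a proof that the subspace \emph{is} quasi-minimal; non-preclusion is not implication. Indeed a space such as $G_u$ is unconditional and tight by range (being tight by support), so under your scheme it lands in the ``unconditional, tight by range'' leaf, yet it is not quasi-minimal and belongs to type (3), not type (4). The correct order in the unconditional case is: apply the second dichotomy first, obtaining either a subspace tight by support (type (3)) or a quasi-minimal subspace; in the quasi-minimal case apply the fourth dichotomy, where the ``tight by range'' alternative gives type (4) with quasi-minimality inherited from the second dichotomy (quasi-minimality being hereditary), and the ``sequentially minimal'' alternative is then split by the third dichotomy into type (6) (minimal) and type (5) (tight). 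Your closing remark about upgrading ``subsequentially minimal'' to the hereditary ``sequentially minimal'' version is appropriate, but the missing second dichotomy must be inserted for the six leaves to be exhausted and for row (4) to be justified.
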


\

\

For information about the examples appearing in type (1) and (3)-(6) we refer to \cite{FR2}.
Two major open problems of \cite{FR} were whether spaces of type (2) or (4) existed.
The only known  proofs of sequential minimality used properties which implied unconditionality, so presumably the construction of a type (2) space would require new methods.

\

 The main result of this paper is the
existence of an example ${\mathcal X}_{GM}$ of type (2), similar to Gowers-Maurey's space, which is reported on the chart above.

\subsection{The main result}

\begin{thm}\label{maim} There exists a version ${\mathcal G \mathcal M}$ of Gowers-Maurey's space such that
\begin{itemize}
\item[(a)] ${\mathcal G \mathcal M}$ does not contain an unconditional basic sequence.
\item[(b)]  Any block subspace of ${\mathcal G \mathcal M}$ contains  a  block sequence
$(y_n)_n$  such that $(y_{2n})$ is equivalent to $(y_{2n+1})$.
\end{itemize}
\end{thm}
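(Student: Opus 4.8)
The plan is to build $\mathcal{GM}$ as a variant of the Gowers--Maurey space of \cite{GM}, arranged so that part (a) comes essentially for free and all the work goes into (b). Fix the usual slowly increasing function $f(n)=\log_2(n+1)$ and a sufficiently lacunary sequence of integers, and let $\mathcal{GM}$ be the completion of $\coo$ under the norm induced by a norming set $K\subset\coo$: the smallest subset of $\coo$ containing $\pm e_n^*$, stable under the operations $\bigl(\tfrac1{f(n)},n\bigr)$ (i.e.\ stable under $\phi_1,\dots,\phi_n\mapsto\tfrac1{f(n)}\sum_{i=1}^n E_i\phi_i$ for successive intervals $E_1<\dots<E_n$ and $\phi_i\in K$), and stable under the adjunction of \emph{special functionals} coded by a fixed injective map $\sigma$ on finite sequences of previously built functionals. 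Part (a) then follows from the standard Gowers--Maurey argument recalled in Subsection~\ref{SubS:1.5}: the special functionals make $\mathcal{GM}$ hereditarily indecomposable, hence without unconditional basic sequence, because in any two block subspaces one produces, from a rapidly increasing sequence (RIS) alternating between the two subspaces together with the special functional reading off the correct sign pattern, vectors $x,y$ with $\|x+y\|$ arbitrarily larger than $\|x-y\|$.

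For (b), fix a block subspace $Y$. Recall that inside $Y$ one can, for any constant $C>1$, build RIS of normalized $\ell_1^{m}$-averages, and that the norm of a combination $\sum_n a_n x_n$ of such a RIS, with complexities $m_{j_1}<m_{j_2}<\dots$, admits two-sided estimates whose only inputs are $C$, the data $(j_n)$, $(a_n)$, and the extent to which the special functionals of $K$ happen to ``see'' the particular sequence $(x_n)$. The idea is to construct in $Y$ a single block sequence $(y_n)$ in which the odd family $(y_{2n-1})$ and the even family $(y_{2n})$ are \emph{parallel}: each $y_n$ is a normalized RIS vector of complexity $m_{j_n}$ with $j_{2n-1}=j_{2n}$, the ranges interleave as $\ran y_{2n-1}<\ran y_{2n}<\ran y_{2n+1}$, the averages defining $y_{2n}$ are obtained from those defining $y_{2n-1}$ by translating all the underlying data by one block, and the coding $\sigma$ is set up so that a legal special sequence either ignores a pair $\{y_{2n-1},y_{2n}\}$ or else uses both of its members in matched positions with matched coefficients. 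Granting such a construction, every functional of $K$ relevant to estimating $\|\sum_n a_n y_{2n-1}\|$ is matched, up to a universal constant and up to the flip $2n-1\leftrightarrow 2n$, by a functional of $K$ relevant to $\|\sum_n a_n y_{2n}\|$, and conversely; the two-sided RIS estimates then give $c^{-1}\|\sum_n a_n y_{2n-1}\|\le\|\sum_n a_n y_{2n}\|\le c\|\sum_n a_n y_{2n-1}\|$ for all scalars, so that $x_n:=y_n$ witnesses the statement.

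The main obstacle is to reconcile the two opposing demands on the special functionals: for (a) the coding $\sigma$ must be injective enough that no special functional can be ``fooled'', which is exactly what makes $\mathcal{GM}$ hereditarily indecomposable and rigid, whereas for (b) the special functionals acting on the interleaved sequence $(y_n)$ must be insensitive to the flip $2n-1\leftrightarrow 2n$, so that they cannot separate $[y_{2n-1}]$ from $[y_{2n}]$. The resolution is to treat each pair $(y_{2n-1},y_{2n})$ as a single unit at the level of the coding and to perform the RIS construction ``in pairs'', so that $y_{2n}$ is literally a one-block translate of $y_{2n-1}$; the injectivity of $\sigma$ is then used only ``across pairs'', which suffices for the HI argument of Subsection~\ref{SubS:1.5} but leaves the flip invariance intact. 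Concretely this forces one to redo, in the paired setting, the standard toolkit of \cite{GM} --- existence and stability of $\ell_1^{m}$-averages, the definition of RIS and the basic inequality controlling $\sum_n a_n x_n$, and the analysis of the action of a single functional of $K$ on a RIS --- and then to run each estimate twice, once for each parity, checking that the outputs agree up to absolute constants. That final comparison, where the paired coding is really used, is the technical heart of the argument.
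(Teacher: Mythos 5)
There is a genuine gap, and it sits exactly at the point you yourself identify as ``the technical heart of the argument''. Your plan is to make the special functionals \emph{flip-invariant} by coding each pair $(y_{2n-1},y_{2n})$ as a single unit, and then to match every norming functional for $\sum a_n y_{2n-1}$ with one for $\sum a_n y_{2n}$. But the coding $\sigma$ is part of the definition of the space and must be fixed before any block subspace $Y$ --- and hence before any sequence $(y_n)\subset Y$ --- is given; the ``flip $2n-1\leftrightarrow 2n$'' only makes sense relative to a particular $(y_n)$, so flip-invariance cannot be built into $\sigma$ without circularity. Moreover, even for a single fixed sequence, weakening the injectivity of $\sigma$ to be pair-blind endangers precisely the upper estimates on alternating sums that give part (a): the injectivity is what forces any special functional not produced by the construction to have mismatched complexities and hence small action. (Incidentally, your claim that the whole space is HI is stronger than what is known; the paper only obtains absence of unconditional sequences, and HI for a further subspace via Gowers' first dichotomy.)

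The idea you are missing is that no matching of norming functionals is needed. The paper keeps a standard injective coding and instead chooses the vectors adaptively: each $z_n=u_n+v_n$ is a (Kutzarova--Lin) yardstick vector built on an SRIS, split into halves $u_n<v_n$ in such a way that every special functional $z^*_t$ \emph{recorded by the construction} satisfies $z^*_t(u_{i_t})=z^*_t(v_{i_t})$, hence annihilates $w_n=u_n-v_n$; every other special functional is small on sums of the $w_n$ because, by injectivity of $\sigma$, its complexities avoid the $j_{2q_n(r)}$ used to build the $z_n$ (Lemma \ref{L:4.15}). The upshot (Theorem \ref{T:6.14}) is an asymmetry of spreading models: $(u_n-v_n)$ generates a spreading model equivalent to the unit basis of $S$ (sums of length $l$ have norm $\sim l/f(l)$), while $(u_n)$ and $(v_n)$ generate spreading models dominating $\|\cdot\|_{f^{1/2}}$ (norm $\gtrsim l/\sqrt{f(l)}$). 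The general Lemma \ref{L:1.8} --- which rests on Schreier unconditionality and Elton's near unconditionality, not on any functional matching --- then shows that $u_n\mapsto u_n-v_n$ and $v_n\mapsto u_n-v_n$ extend to bounded operators, so $u_n\mapsto v_n=u_n-(u_n-v_n)$ and its inverse are bounded, which is Theorem \ref{T:6.15}. This is also why the paper must modify $GM$ at all (allowing every $k$ in the special functionals, and $n_1=j_{2k'}$ with $k'\ge k$): the spreading-model estimates are needed for all lengths $l$, not just along a lacunary set. Without this spreading-model mechanism, or a worked-out substitute, your final step ``the two-sided RIS estimates then give equivalence'' is unsupported.
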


The proof of Theorem will be accomplished in Section \ref{S:6}, Theorem \ref{T:6.15}. 
The modification leading to ${\mathcal G \mathcal M}$ is essentially technical. 
Note that by (a) and the first dichotomy, ${\mathcal G \mathcal M}$ contains an HI subspace. 
So this subspace is not isomorphic to its proper subspaces, although by Theorem \ref{maim} (b), it does not satisfy
 Casazza's criterion.  
Using also the third and fourth dichotomy, we deduce that some subspace of ${\mathcal G \mathcal M}$ satisfies:

\begin{thm} There exists a tight, HI, sequentially minimal space ${\mathcal X}_{GM}$. \end{thm}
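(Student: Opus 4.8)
The plan is to obtain ${\mathcal X}_{GM}$ inside ${\mathcal G \mathcal M}$ by successively applying the first, third and fourth dichotomies, with property (b) of Theorem~\ref{maim} doing the work of eliminating the unwanted alternative at the last stage.

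First, by Theorem~\ref{maim}(a) the space ${\mathcal G \mathcal M}$ contains no unconditional basic sequence, so the first dichotomy (Theorem~\ref{T:0.1}) produces an HI subspace; after a routine perturbation, and since being HI passes to subspaces, we may take it to be an HI \emph{block} subspace $X$ of ${\mathcal G \mathcal M}$. As recalled in the introduction, an HI space is isomorphic to no proper subspace (\cite[Corollary~19 and Theorem~21]{GM}), so $X$ contains no minimal subspace. Applying the third dichotomy (Theorem~\ref{T:0.3}) to $X$, and using that tightness is hereditary, we obtain a tight block subspace $Y$ of $X$, which remains HI.

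Now apply the fourth dichotomy (Theorem~\ref{main2}), in the improved form stated just after it, to $Y$: some block subspace $Z=[z_n]$ of $Y$ has a basis which is either tight by range or sequentially minimal. The first case is impossible. Indeed, tightness by range of $(z_n)$ would mean that no two block subspaces of $Z$ with disjoint ranges are comparable; but Theorem~\ref{maim}(b), applied to the block subspace $Z$ of ${\mathcal G \mathcal M}$, furnishes a block sequence $(y_n)$ in $Z$ for which $y_{2n}\mapsto y_{2n+1}$ extends to an isomorphism of $[y_{2n}]$ onto $[y_{2n+1}]$ --- and these two block subspaces have disjoint ranges (since $(y_n)$ is a block sequence), so they are comparable, a contradiction. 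Hence $(z_n)$ is sequentially minimal. Since $Z$ is a block subspace of the HI, tight space $Y$, it is itself HI and tight, so ${\mathcal X}_{GM}:=Z$ has all three required properties.

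The deduction is short once Theorem~\ref{maim} is in hand; the one point that needs attention is that each dichotomy be invoked so as to return a block subspace of the fixed basis of ${\mathcal G \mathcal M}$ rather than an arbitrary subspace with a basis, since it is precisely as a block subspace of ${\mathcal G \mathcal M}$ that $Z$ is governed by Theorem~\ref{maim}(b). That final invocation is where the real content sits: it exhibits an HI space on which Casazza's criterion fails, even though --- by the general Fredholm-theoretic properties of HI spaces --- that space is isomorphic to none of its proper subspaces.
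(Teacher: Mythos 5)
Your deduction is correct and is exactly the route the paper sketches in the introduction (first dichotomy plus Theorem \ref{maim}(a) to get an HI block subspace, non-minimality of HI spaces plus the third dichotomy to get tightness, then the fourth dichotomy with the tight-by-range alternative killed by Theorem \ref{maim}(b)); your exclusion of tightness by range via the two disjointly ranged, equivalent block subspaces $[y_{2n}]$ and $[y_{2n+1}]$ is the intended argument, and you rightly flag that every dichotomy must be invoked so as to return a block subspace of $\mathcal{GM}$. The one difference worth noting is that the paper's detailed proof (Theorem \ref{goma} in Section \ref{S:7}) does not cite the fourth dichotomy as a black box: it applies Gowers' Ramsey Theorem (Theorem \ref{ramsey}) directly to the Borel set $\A$ of block sequences realizing the spreading-model estimates of Theorem \ref{T:6.14}, and then extracts a block subspace $[v_n]$ from the resulting strategy tree. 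That costs a page of strategy-tree bookkeeping but buys a quantitatively stronger conclusion --- a uniform constant $c$ and explicit spreading-model domination for $(y_k)$, $(f_k)$, $(y_k\pm f_k)$, where $(f_k)$ is a subsequence of the basis of $\mathcal{X}_{GM}$ --- whereas your argument yields only the qualitative statement of the theorem. For the statement as posed, your proof is complete at the same level of rigor as the paper's own introduction-level deduction.
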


 It may be  surprising to see that the answer to the existence of type (2) spaces is given by a non-essential modification of
the first known example of HI space.
We actually believe that $GM$ itself satisfies Theorem \ref{maim} (b), and therefore fails to satisfy the criterion of Casazza.

\

We shall also observe that the space ${\mathcal G \mathcal M}$ is {\em locally minimal}, which means that all finite dimensional subspaces of ${\mathcal G \mathcal M}$ embed into all its infinite dimensional subspaces, with uniform constant.  Problem 5.2 from \cite{FR2} asked whether a sequentially and locally minimal should be minimal or at least contain a minimal subspace. We therefore answer this by the negative.

\begin{thm} There exists a locally minimal, sequentially minimal, tight space.
\end{thm}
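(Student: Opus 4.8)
The plan is to deduce this statement as an immediate consequence of the preceding theorems together with one additional local-minimality observation about the constructed space $\mathcal{GM}$. First I would recall that Theorem~\ref{maim}(a) and the first dichotomy (Theorem~\ref{T:0.1}) produce an HI subspace $Y$ of $\mathcal{GM}$; since HI spaces cannot contain minimal subspaces (by \cite[Corollary 19, Theorem 21]{GM}), the third dichotomy (Theorem~\ref{T:0.3}) forces $Y$ to contain a tight subspace $Z$. Next, Theorem~\ref{maim}(b), read for block subspaces of $Z$, shows $Z$ is quasi minimal; combined with the fourth dichotomy (Theorem~\ref{main2}) applied inside $Z$ — where tightness by range is excluded because it is incompatible with quasi minimality — one passes to a further block subspace that is subsequentially minimal, and in fact sequentially minimal by the strengthening noted after Theorem~\ref{main2}. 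Thus one obtains a tight, HI, sequentially minimal space, which is precisely the content of the intermediate theorem stated just above; the point now is to see that this space is also \emph{locally minimal}.

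The key extra ingredient is therefore the local minimality of $\mathcal{GM}$ and of its subspaces. I would establish this directly from the norming set of the Gowers--Maurey construction: in these spaces the functionals of the form $f_1 + \dots + f_n$ with $(f_i)$ a special or rapidly increasing sequence witness that, for any $n$ and any $\varepsilon>0$, any block subspace contains a normalized block sequence $(z_i)_{i=1}^n$ that is $(1+\varepsilon)$-equivalent to the unit vector basis of a fixed $n$-dimensional space (the relevant ``$\ell_1^n$-like with Schlumprecht weights'' estimate that already underlies the HI property). Since this holds with a constant independent of the block subspace, every finite-dimensional subspace of $\mathcal{GM}$ embeds with uniform constant into every infinite-dimensional (equivalently, every block) subspace; local minimality passes to subspaces trivially. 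Applying this to the space $\mathcal{X}_{GM}$ obtained in the previous paragraph yields a locally minimal, sequentially minimal, tight space.

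I expect the main obstacle to be purely bookkeeping rather than conceptual: one must verify that the modification producing $\mathcal{GM}$ (described as ``essentially technical'' in the paper) does not destroy the uniform lower $\ell_1^n$-estimates on arbitrary block sequences that give local minimality, and that the subspace $\mathcal{X}_{GM}$ extracted via the three dichotomies is genuinely a block subspace (or isometrically embeds in one) so that the local-minimality constant is inherited. Both points should follow from the detailed estimates of Sections~\ref{S:6} and the stability of the norming set under passing to block subspaces, so no new machinery is needed beyond what Theorem~\ref{maim} already requires. Once local minimality of $\mathcal{GM}$ is in hand, the theorem is just the conjunction of this fact with the tight, HI, sequentially minimal subspace theorem, and the negative answer to Problem~5.2 of \cite{FR2} is immediate.
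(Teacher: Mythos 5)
Your reduction to the earlier results is fine: the derivation of a tight, HI, sequentially minimal subspace ${\mathcal X}_{GM}$ via the first, third and fourth dichotomies matches the paper, and your worry about whether ${\mathcal X}_{GM}$ sits inside ${\mathcal G \mathcal M}$ as a block subspace is harmless, since local minimality passes to arbitrary subspaces. The problem is in your argument for local minimality of ${\mathcal G \mathcal M}$ itself.

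The step ``any block subspace contains a normalized block sequence $(1+\varepsilon)$-equivalent to a fixed $n$-dimensional space ($\ell_1^n$-like with Schlumprecht weights), hence every finite-dimensional subspace of ${\mathcal G \mathcal M}$ embeds with uniform constant into every infinite-dimensional subspace'' is a non sequitur. Uniform presence of \emph{some fixed} sequence of finite-dimensional spaces in every block subspace gives local minimality only if those spaces are uniformly universal for the finite-dimensional subspaces of ${\mathcal G \mathcal M}$, and weighted-$\ell_1^n$ spaces are not: Tsirelson's space $T$ satisfies exactly the kind of uniform lower $\ell_1^n$-estimate you invoke on all block sequences, yet $T$ is tight with constants and in particular not locally minimal. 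Moreover ${\mathcal G \mathcal M}$ does contain $\ell_\infty^n$'s (see below), and $d(\ell_1^n,\ell_\infty^n)\to\infty$, so the $\ell_1^n$-like spaces cannot absorb all finite-dimensional subspaces with a uniform constant. The paper's argument goes the opposite way: the yardstick vectors show that $S$ contains $\ell_\infty^n$'s uniformly (each $y(m_1,\dots,m_k)$ is a sum of $k$ disjointly supported norm-one vectors whose sum has norm at most $1+\varepsilon$); by Proposition \ref{P:5.1} every block subspace of ${\mathcal G \mathcal M}$ contains an SRIS whose spreading model is $C$-equivalent to the unit vector basis of $S$, hence contains $\ell_\infty^n$'s uniformly; and since every $n$-dimensional space $(1+\varepsilon)$-embeds into $\ell_\infty^N$ for $N$ large, this \emph{universality} of $\ell_\infty^n$ is what yields local minimality. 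You need to replace your $\ell_1$-type estimate by this $\ell_\infty^n$-saturation argument; the rest of your outline then goes through.
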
 

To conclude this subsection let us mention that our results hold both in the real and in the complex setting.

\subsection{Some comments on our construction}\label{SubS:1.5}

\

Let us first recall why HI spaces are quasi-minimal. Let $X$ be an HI space with a basis. If $\epsilon>0$, and  two block-subspaces $U$ and $V$ of $X$ are given, one can use the HI property to obtain two normalized block-sequences $(u_n)_n$ and $(v_n)_n$ in $U$ and $V$ respectively,
so that $\norm{u_n-v_n} \leq \epsilon 2^{-n}$ for all $n \in \N$.
So there is a compact perturbation of the canonical injection mapping $[u_n, n \in \N]$ onto $[v_n, n \in \N]$,  which are therefore isomorphic. 
Note that if $U$ and $V$ are disjointly supported, or even disjointly ranged, then each $u_n$ is disjointly {\em supported} from $v_n$. 

\

If now we want to obtain a canonical isomorphism between $[u_n, n \in \N]$ and $[v_n, n \in \N]$, so that $(u_n)$ and $(v_n)$ are disjointly {\em ranged} and seminormalized block sequences, then such a crude approach does not work. Let us explain this when $(u_n)$ and $(v_n)$ are {\em intertwined}, i.e. $u_0<v_0<u_1<v_1<u_2<\cdots$.  
By using the projection on the range of $u_n$, we see that the norm $\norm{u_n-v_n}$ is bounded below by a constant depending on the constant of the basis, and so the map $u_n \mapsto u_n-v_n$ can never be compact.
We may however hope to pick $u_n$ and $v_n$ so that this map is strictly singular.
Actually in the case when $X$ is, say, complex HI, we  must do so. Indeed, we  know in this case \cite{Fe}  that there must exist $\lambda \in \C$ and  
 a strictly singular operator $S:[u_j:j\kin\N]\to X$,
   such that $v_n-\lambda u_n= S(u_n)$; so by projecting on ${\rm ran}\ v_n$ we get that $S(u_n)$ is bounded below, and that $S$ is  strictly singular non compact from $[u_n, n \in \N]$ into $X$.

So our result of existence of two intertwined and equivalent block sequences in any subspace of ${\mathcal G \mathcal M}$ will be related to the techniques of the construction of strictly singular non-compact operators on subspaces of Schlumprecht's space $S$ and of $GM$ type spaces, as appears   in \cite{AS} and \cite{Sch2}.
We shall replace the condition that $\|u_n-v_n\| \leq \epsilon 2^{-n}$  by the requirement that the sequence $u_n-v_n$ generates a spreading model which is "largely"  dominated by the spreading models of $u_n$, $v_n$ and $u_n+v_n$. 
From some techniques of \cite{Sch2}, this will imply that the map taking $u_n$ to $u_n-v_n$ extends to a bounded (actually strictly singular) map, and the same for the map taking $v_n$ to $u_n-v_n$. Therefore $(u_n)$ and $(v_n)$ will be equivalent. 

Note that our estimates will imply that
$\|\sum_{i=1}^k (u_{n_i}-v_{n_i}) \| \leq \epsilon\| \sum_{i=1}^k (u_{n_i}+v_{n_i})\|$
whenever $k<n_1< \cdots< n_k$ and $k$ is large enough with respect to  $\epsilon$.  Thus we recover the result of saturation of $GM$ with finite block-sequence $(y_i)_{i=1}^{2k}$ such that
$\|\sum_{i=1}^{2k} (-1)^i y_i\| \leq \epsilon \|\sum_{i=1}^{2k} y_i\|$, for some $k=k(\epsilon)$ large enough, but of course our result is much stronger, since we can choose $(y_i)$ to be any finite subsequence $(u_{n_1},v_{n_1},\ldots,u_{n_k},v_{n_k})$ as above. This estimate implies that ${\mathcal G \mathcal M}$ does not contain a subspace with
an asymptotically unconditional basis, which  means by Gowers' dichotomy that ${\mathcal G \mathcal M}$ has a subspace which is HI. 
(and even, by \cite{W}, satisfies the HI property in a "uniform" way).

\subsection{Some preliminary definitions}\label{S:0}

\

We use the  usual definitions and notation for  $\coo$, $(e_i)$,  $E(x)$, $\supp(x)$, $\ran(x)$,   $E<F$ and $x<y$ for $E,F\subset \N$, and $x,y  \in\coo$.
The closed linear span of a basic sequence $(x_n)_{n \in \N}$ is denoted $[x_n, n \in \N]$.

We say that two vectors $x$ and $y$ in  $\coo$ {\em have the same distribution } and write
$x\eqdist y$ if there  there are natural numbers $l$ , $m_1<m_2 <\ldots m_l$,
and $n_1<n_2<\ldots n_l$, and a sequence $(a_i:i=1,2\ldots l)\subset \R$, so that
$$x=\sum_{i=1}^l a_i e_{m_i}\text{ and }  y=\sum_{i=1}^l a_i e_{n_i}.$$
We say $x$ {\em is the distribution of $y$} if $x$ and $y$ have the same distribution and if
the support of $x$ is an initial interval of $\N$. 

Note  that a vector $x\in\coo$ is uniquely defined by its distribution and its support.

 \begin{defin}\label{D:0.1} Let $X$ be a Banach space with a basis $(e_i)$. We  call  a vector $x$ in  $X$ an {\em $\ell_1^{+n}$-average,} 
  if $x=\frac1n\sum_{i=1}^n x_i$, where  $(x_i)_{i=1}^n$  is a block sequence (of $(e_i)$) in $B_X$.
 For $c\in (0,1]$  an $\ell_1^{+n}$-average $x$ is called {\em $\ell_1^{+n}$-average of constant $c$,} 
 if $\|x\|\ge c$.

   If moreover $(x_i)_{i=1}^n$ is $\frac1c$-isomorphic to the $\ell_1^n$ unit vector basis, we 
   say that $x$ is an    {\em $\ell_1^{n}$-average  of constant $c$.}
In particular it follows in that case that $\|\sum_{i=1}^n \pm x_i\|\ge c$.  \end{defin}

 \begin{remark} For a certain minor technical reason, we are not assuming  in Definition  \ref{D:0.1} that the sequence $(x_i)$ is normalized. But
 of course if  $x$ is supposed to be an $\ell_1^{+n}$- or an     $\ell_1^{n}$-average, of a constant $c$ close to $1$, then the norm of most of the $x_i$ also 
 has to be close to $1$.    \end{remark}

\section{The space $S$}\label{S:1}
We recall the space introduced in  \cite{Sch1}. We define
\begin{equation}
\label{E:1.1}
f(x)=\log_2(x+1),  \text{ for $x\ge 1$.}
\end{equation}

The space  $S$ is the completion of $\coo$ under the norm $\|\cdot\|_S$ which satisfies the following implicit equation.
\begin{equation}\label{E:1.2}
\|x\|_S=\max\Big(\|x\|_\infty,\max_{\begin{matrix}\scriptstyle l\in\N\\ \scriptstyle E_1<E_2<\ldots E_l\end{matrix}} \frac1{f(l)}\sum_{j=1}^l \|E_i(x)\|_S\Big) \text{ for }x\in c_{00} .
\end{equation}

As observed in \cite{Sch1}, there is a norm $\|\cdot\|_S$ on $\coo$, which satisfies   Equation 
\eqref{E:1.2}, the  completion $S$ of $(c_{00},\|\cdot\|_S)$ is reflexive, and $(e_i:i\in\N)$
is a $1$-subsymmetric  (i.e $1$-spreading and $1$-unconditional) basis of $S$.

For $l=2,3\ldots $ and  $x\in S$ we define
$$\|x\|_l=\frac1{f(l)} \max_{E_1<E_2<\ldots E_l}  \sum_{j=1}^l \|E_i(x)\|_S. $$
Then $\|\cdot\|_l$ is an equivalent norm on $S$ and for $x\in S$,
\begin{align}\label{E:1.3} 
&\frac1{f(l)} \|x\|\le \|x\|_l\le \|x\| \text{ and } \\
\label{E:1.4} &\|x\|=\max \big( \|x\|_\infty, \sup_{l\in\N} \|x||_l\big).
\end{align}

\subsection{Upper bounds of $\|\cdot\|_S$}

We will need to show some upper estimates for $\|\cdot\|_S$ and for  basic sequences which have spreading models equivalent to the unit basis in $S$.
\begin{defin}\label{D:1.5}  
For  a bounded sequence $(\xi_i)$  in $\R$ we denote the decreasing rearrangement of $(|\xi|_i|)$ by
$(\xi^\#_i)$. 

Assume that  $g:[1,\infty)\to [1,\infty)$ is an increasing  function   with  $g(1)=1$.
We define the following two norms on $c_{00}$. For $x=(x_i)\in c_{00}$ we define
\begin{align*}
\|x\|_g&=\max_{n_1<n_2<\ldots n_l, l\in\N} \frac1{g(l)} \sum_{i=1}^l |x_{n_i}|\text{ and }
\tn x\tn_g=\sum_{i=1}^\infty \frac1{g(i)}x^\#_i.
\end{align*}\end{defin}
It is clear that $\|\cdot\|_g\le \tn\cdot\tn_g$. The following Lemma describes a situation in which 
 we can bound $\tn\cdot\tn_{g^p}$, $1<p<q$ by   a multiple of $\|\cdot\|_{g^q}$.

 \begin{lem}\label{L:1.6} For $0<p<q$ there is a constant $C(p,q)$ so that 
 $$\tn x\tn_{f^{q}}\le C(p,q)\|x\|_{f^p}, \text{ for all $x\kin c_{00}$}.$$
 Here $f:[0,\infty)\to [0,\infty)$ is  defined as in \eqref{E:1.1} by $f(x)=\log_2(x+1)$, for $x\ge 1$.
 \end{lem}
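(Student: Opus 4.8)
\textbf{Proof strategy for Lemma \ref{L:1.6}.}

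The plan is to fix $x\in c_{00}$, normalize so that $\|x\|_{f^p}=1$, and then bound $\tn x\tn_{f^q}=\sum_{i\ge 1} f(i)^{-q} x^\#_i$ directly using what the normalization tells us about the decreasing rearrangement $(x^\#_i)$. The key observation is that for every $l$, the inequality $\|x\|_{f^p}\ge f(l)^{-p}\sum_{i=1}^l x^\#_i$ (taking $n_1<\dots<n_l$ to be the positions of the $l$ largest coordinates) gives the partial-sum bound $\sum_{i=1}^l x^\#_i\le f(l)^p$. In particular, applied with $l$ and $l-1$ and subtracting, or more simply by monotonicity of $(x^\#_i)$, one gets $x^\#_l\le \frac1l\sum_{i=1}^l x^\#_i\le \frac{f(l)^p}{l}$. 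These two facts — the partial-sum bound and the pointwise bound $x^\#_l\le f(l)^p/l$ — are the whole input; everything after is a calculation with the explicit function $f(t)=\log_2(t+1)$.

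Next I would estimate $\sum_{i\ge 1} f(i)^{-q}x^\#_i$ by splitting into dyadic blocks $B_k=\{2^k\le i<2^{k+1}\}$, $k\ge 0$ (plus the $i=1$ term, which is harmless since $x^\#_1\le 1$ and $f(1)=1$). On $B_k$ the weight $f(i)^{-q}$ is comparable to $k^{-q}$ (indeed $f(2^k)=\log_2(2^k+1)\approx k$), so $\sum_{i\in B_k} f(i)^{-q}x^\#_i \le C k^{-q}\sum_{i\in B_k}x^\#_i\le C k^{-q}\sum_{i=1}^{2^{k+1}}x^\#_i\le C k^{-q}f(2^{k+1})^p\le C' k^{-q}(k+1)^p\le C'' k^{p-q}$, using the partial-sum bound. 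Summing over $k\ge 1$ gives $\sum_{k\ge 1} C'' k^{p-q}$, which converges precisely because $q-p>1$ — wait, it only needs $q-p>1$ for this crude split; if only $0<p<q$ is assumed one must be more careful, so I would instead use the sharper pointwise bound $x^\#_i\le f(i)^p/i$ on each block, giving $\sum_{i\in B_k}f(i)^{-q}x^\#_i\le C k^{-q}\sum_{i\in B_k}\frac{f(i)^p}{i}\le C k^{-q}\cdot k^p\cdot\sum_{i\in B_k}\frac1i\le C k^{p-q}\cdot\log 2 \cdot$(bounded), still summable only for $q>p+1$. To cover the full range $0<p<q$ I would instead interpolate: combine the pointwise bound with the partial sum bound more cleverly — use $x^\#_i\le f(i)^p/i$ together with Abel summation against the decreasing weight $f(i)^{-q}$, writing $\sum_i f(i)^{-q}x^\#_i = \sum_l \big(f(l)^{-q}-f(l+1)^{-q}\big)\sum_{i\le l}x^\#_i \le \sum_l \big(f(l)^{-q}-f(l+1)^{-q}\big) f(l)^p$, and then this telescoping-type sum $\sum_l \big(f(l)^{-q}-f(l+1)^{-q}\big)f(l)^p$ converges for all $0<p<q$ because $f(l)^{-q+\epsilon}$ is summable after telescoping — concretely, $f(l)^{-q}-f(l+1)^{-q}\asymp q\,f(l)^{-q-1}f'(l)\asymp \frac{1}{l(\log l)^{q+1}}$, so the sum behaves like $\sum_l \frac{(\log l)^{p}}{l(\log l)^{q+1}}=\sum_l \frac{1}{l(\log l)^{q-p+1}}$, which converges for every $q-p>0$. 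The constant $C(p,q)$ is then extracted from the value of this convergent series together with the finitely many block/comparison constants.

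\textbf{Main obstacle.} The conceptual content is trivial; the real work is the quantitative bookkeeping to get convergence for \emph{all} $0<p<q$ rather than just $q>p+1$, which is why the Abel-summation form against the increments of $f^{-q}$ is the right device — it converts the polynomial-in-$l$ partial sum bound into a $\sum 1/(l(\log l)^{1+(q-p)})$ tail, and that integral test is where the hypothesis $p<q$ is used exactly. A secondary point of care is that $f(t)=\log_2(t+1)$ is only defined and $\ge 1$ for $t\ge 1$, and $f(1)=1$, so the $i=1$ term and the behaviour near the bottom of the summation must be handled separately (trivially, since $x^\#_1\le\|x\|_{f^p}$ there), with the asymptotic estimates $f(l)\asymp\log l$, $f'(l)\asymp 1/l$ only invoked for large $l$; the finitely many small-$l$ terms are absorbed into $C(p,q)$.
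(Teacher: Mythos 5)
Your proof is correct, but it takes a different route from the paper's. The paper proves the same convergence fact for the series $\sum_{n}\bigl(f^p(n)-f^p(n-1)\bigr)/f^q(n)$ (via the Mean Value Theorem and the integral test, exactly as in your tail estimate $\sum_l l^{-1}(\log l)^{-(1+q-p)}<\infty$), but then, instead of Abel summation, it runs a variational argument: it fixes $L$, considers the maximum of $\tn\cdot\tn_{f^q}$ over $\{x:\ran(x)\subset[1,L],\ \|x\|_{f^p}\le1\}$, and shows by a perturbation/exchange argument that the maximizer is the extremal vector $x^{(L)}=\sum_{j\le L}(f^p(j)-f^p(j-1))e_j$, whose $\tn\cdot\tn_{f^q}$-norm is exactly the convergent series above. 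Your approach replaces that optimization with summation by parts against the increments $f(l)^{-q}-f(l+1)^{-q}$, using only the partial-sum bound $\sum_{i\le l}x^\#_i\le f(l)^p$; the two convergent series you and the paper obtain are the two summation-by-parts forms of the same quantity (and indeed the paper's extremal vector is precisely the one saturating your partial-sum bound at every $l$). Your version is arguably cleaner, since it avoids verifying optimality of $x^{(L)}$; what the paper's version buys is an explicit identification of the extremal vector. Two minor points to tidy up in a write-up: state the Abel identity with the boundary term (for $x\in c_{00}$ it disappears because $f(l)^{-q}\to0$ and the partial sums stabilize), and discard your first dyadic-block attempt entirely rather than leaving it in, since as you note it only covers $q>p+1$.
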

 \begin{proof} We first  observe that 
 $$C(p,q)=\sum_{n=1}^\infty \frac{f^p(n)-f^p(n-1)}{f^q(n)}<\infty.$$
 Indeed, by the Mean Value Theorem, there is for every $n\kin\N$ an $\eta_n\kin(n,n\!+\!1)$, so that 
 $$ f^p(n)-f^p(n-1)=\big( \log_2(e)\big)^p\frac{\partial\ln^p(x)}{\partial x}\Big|_{x=\eta_n} = \big( \log_2(e)\big)^p p\frac{\ln^{p-1}(\eta_n)}{\eta_n},$$
 and thus
$$\big( \log_2(e)\big)^{q-p} \sum_{n=2}^\infty \frac{f^p(n)-f^p(n-1)}{f^{q}(n)} =\sum_{n=2}^\infty \frac1{\eta_n} \frac1{\ln^{1-p}(\eta_n)\ln^q(n+1)}
<\sum_{n=2}^\infty \frac1{n} \frac1{\ln^{1+q-p}(n)},  $$
which is finite  by the integral test.

Secondly we claim that for $L\in\N$ 
$$M_L=\max\{\tn x\tn_{f^q} : \ran(x)\subset [1,L],  \text{ and }\|x\|_{f^p}\le 1\},$$
is achieved for the vector
$$x^{(L)}=\sum_{j=1}^L (f^p(j)-f^p(j-1))e_j \text{ with $f(0)=0$,}$$
which would imply that $M_L\le C(p,q)$.
Indeed, $\|x^{(L)}\|_{f^p}=1$, and if $z=(z_i)_{i=1}^L\in \R^L$, $\|z\|_{f^p}=1$, and
$$ \tn z\tn_{f^q}= \sum_{j=1}^L \frac{z^\#_j}{f^q(j)}=M_L,$$
we can assume without loss of generality that $z_1\ge z_2\ge \ldots z_L\ge 0 $.
Note that actually $z_L>0$. Otherwise let 
$l_0=\min\{j:  z_i=0  \text{ for all } i\ge j \}$, and note for $l\ge l_0$
that 
$$\frac1{f^p(l)}\sum_{j=1}^{l} z_j=\frac1{f^p(l)}\sum_{j=1}^{l_0-1} z_j
<\frac1{f^p(l_0-1)}\sum_{j=1}^{l_0} z_j\le 1.$$
Thus we could  increase the value of $z_{l_0}$, and thus increase the value of $\tn z\tn_{f^q}$, without increasing the value of
$\|z\|_{f^p}$, which contradicts the maximality of $z$.

We want to show now that  $z=x^{(L)}$, which would imply our claim.
If this were not true we put
$$l_o=\min \big\{ j \kin\{1,2,\ldots,L\}: z_j\not=f^p(j)-f^p(j-1)\big\}.$$
First we note that $z_{l_0}<f^p(j)-f^p(j-1)$, because otherwise $z_{l_0}>f^p(j)-f^p(j-1)$ and thus
$\frac1{f^p(l_0)}\sum_{j=1}^{l_0} z_j >\frac1{f^p(l_0)}\sum_{j=1}^{l_0}f^p(j)-f^p(j-1)=1$,

Note that $l_0\not=L$ otherwise we could increase $z_{L}$ to $f^p(L)-f^p(L-1)$, which would not 
increase $\|z\|_{f^p}$, but certainly increase $ \tn z\tn_{f^q}$.
If $l_0<L$ we could  increase $z_{l_0}$ by    
$\min(f^p(l_0)-f^p(l_0-1), z_{l_0})>0$ and decrease $z_{l_0+1}$ by the same amount. This would not increase the
 $\|\cdot\|_{f^q}$-norm but it would increase the $\tn\cdot \tn_{f^p}$-norm of $z$.
 \end{proof}
 The proof  of the next  Lemma could be shown using \cite[Theorem 1.1]{Sch2} and its proof. Nevertheless, since the arguments
in this case are much simpler we prefer to present a self contained argument.

\begin{lem}\label{L:1.8} 
Let  $(x_n)$ and $(y_n)$ be two basic seminormalized weakly null  sequences in a Banach space $X$, having spreading models $E$ and $F$ with bases  $(\xt_n)$ and $(\yt_n)$, respectively.
 Assume that for some $0<p<q$ and some $0<c,C<\infty$ it follows that
\begin{equation}\label{E:1.8.1}
\Big\|\sum_{i=1}^\infty a_n \xt_n\Big\|_E \ge  c\Big\|\sum_{i=1}^\infty a_n e_n\Big\|_{f^p} \text{ and }
\Big\|\sum_{i=1}^\infty a_n \yt_n\Big\|_F \le  C\Btn\sum_{i=1}^\infty a_n e_n\Btn_{f^q}.
\end{equation}
Then there  is a subsequence $(n_k)$ of $\N$ so that the map $x_{n_k}\mapsto y_{n_k}$ extends to a linear bounded operator.
\end{lem}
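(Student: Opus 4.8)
The plan is to transfer the two spreading–model estimates in \eqref{E:1.8.1} to honest linear combinations of a carefully chosen subsequence, the bridge being the concrete inequality of Lemma~\ref{L:1.6}. First, chaining the hypotheses through Lemma~\ref{L:1.6} gives, for every finitely supported $(a_n)$,
\[
\Big\|\sum_n a_n\yt_n\Big\|_F\;\le\;C\,\Btn\sum_n a_n e_n\Btn_{f^q}\;\le\;C\,C(p,q)\,\Big\|\sum_n a_n e_n\Big\|_{f^p}\;\le\;\frac{C\,C(p,q)}{c}\,\Big\|\sum_n a_n\xt_n\Big\|_E .
\]
Thus, with $D:=C\,C(p,q)/c$, the spreading model $F$ of $(y_n)$ is $D$-dominated by the spreading model $E$ of $(x_n)$; equivalently $\xt_n\mapsto\yt_n$ extends to an operator of norm $\le D$ on the model spaces. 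The whole point is to push this domination down to the sequences themselves, along a subsequence.

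\emph{Choice of the subsequence.} By a standard diagonalisation I pass to a subsequence (still written $(x_n),(y_n)$) and fix $\varepsilon_j\downarrow 0$ with $\sum_j\varepsilon_j<\infty$ so that: (i) there is a fixed $K_0\in\N$ for which all combinations of $x$'s or of $y$'s of length $\le K_0$ are $\varepsilon_0$-approximated by the corresponding combinations of $\xt$'s, resp.\ $\yt$'s; and (ii) the index sequence grows so fast that, when the $k$-indices are split into consecutive blocks $B_1<B_2<\cdots$ of lengths increasing geometrically (with $|B_1|=K_0$), each block $B_r$ begins at an index beyond the threshold needed to $\varepsilon_r$-approximate, by the spreading model, combinations of length $|B_r|$. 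I also use that a spreading model of a weakly null sequence is $1$-suppression unconditional, which lets me pass between the norm of a vector and of its restrictions to the blocks $B_r$, both in $\|\cdot\|_E$ and in $\|\cdot\|_{f^p}$.

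\emph{The uniform estimate.} Let $x=\sum_{k=1}^m a_k x_{n_k}$ with $\|x\|=1$ and $y=\sum_{k=1}^m a_k y_{n_k}$. On the first block $B_1$ the coordinate map $x_{n_k}\mapsto y_{n_k}$ is a linear map between two \emph{fixed} finite-dimensional spaces, hence bounded by some $M_0$; since $(x_n)$ is a basis with constant $K_b$, the $B_1$-part of $y$ is at most $M_0K_b\|x\|$. For each $r\ge 2$ I estimate $\big\|\sum_{k\in B_r}a_k y_{n_k}\big\|$ from above by the spreading-model approximation on $B_r$, then by $C\,\Btn\sum_{k\in B_r}a_ke_k\Btn_{f^q}+\varepsilon_r$, then (Lemma~\ref{L:1.6}) by $C\,C(p,q)\|\sum_{k\in B_r}a_ke_k\|_{f^p}+\varepsilon_r$; and I bound $\|x\|$ from below using the interval projection onto $B_r$ (norm $\le 2K_b$), the spreading-model approximation for $x$ on $B_r$, and the lower estimate for $E$. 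Assembling these over $r$ yields $\|y\|\le M\|x\|$ with $M$ depending only on $c,C,p,q,K_b$ and $\sup_n\|y_n\|$, and hence the asserted bounded extension of $x_{n_k}\mapsto y_{n_k}$ on $[x_{n_k}:k\in\N]$.

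\emph{Where the difficulty lies.} The delicate point is that decomposing $y$ over the blocks $B_r$ produces a \emph{sum} of block contributions, while the lower bound for $\|x\|$ coming from projections is only a \emph{maximum} over blocks, so a naive comparison loses a factor equal to the number of blocks, which is unbounded in $m$. This is precisely where the strict gap $p<q$ is essential: the slack $f^{\,q}/f^{\,p}=f^{\,q-p}$ furnished by Lemma~\ref{L:1.6} grows like a fixed power of the logarithm of the block length, and one must organise the multi-scale decomposition — choosing the growth of the $|B_r|$, and splitting $a$ according to the size of its coefficients as well as their position — so that this logarithmic gain dominates both the number of scales and the accumulated errors $\sum_r\varepsilon_r$. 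Getting this bookkeeping to close, rather than any single inequality, is the real content of the lemma, and is the step for which one could instead invoke \cite[Theorem~1.1]{Sch2} and its proof.
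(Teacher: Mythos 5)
Your proposal sets up the right framework (passing to the spreading models, using Lemma \ref{L:1.6} to convert the $\tn\cdot\tn_{f^q}$ upper bound into an $\|\cdot\|_{f^p}$ bound, and diagonalising so that each scale of the decomposition is seen by the spreading models), but it stops exactly at the point where the lemma actually has content, and the decomposition you choose would not close. As you yourself observe, summing the block contributions of $y$ against a maximum of projections of $x$ loses a factor equal to the number of blocks, and Lemma \ref{L:1.6} alone cannot recover this: it gives a \emph{fixed} constant $C(p,q)$ per block, so the positional blocks $B_1<B_2<\cdots$ give $\|y\|\lesssim (\#\text{blocks})\cdot\|x\|$, unbounded in $m$. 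The missing ingredient is Lemma \ref{L:1.7}, which you never invoke: the quantity $\Delta_{(p,r)}(\vp)\to 0$ as $\vp\to 0$ (for $p<r<q$) is what makes a multi-scale argument summable, and it only bites if the scales are defined by the \emph{magnitude} of the coefficients, not their position. The paper decomposes $(a_j)$ into layers $\vp_{n+1}<|a_j|\le\vp_n$; each layer has $f^p$-norm $\le 6/c$ (from the lower spreading-model estimate for $x$, via Schreier unconditionality) and sup-norm $\le\vp_n$, hence $f^r$-norm $\le\frac{6}{c}\Delta_{(p,r)}(c\vp_n/6)$, and these are chosen summable in $n$. A positional block can contain large coefficients, so its contribution is not absolutely small and no such summation is available.

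Two further gaps in your sketch: (i) restricting $\sum_j a_j y_j$ to a set of indices is not a bounded operation, since $(y_n)$ need not be unconditional; the paper handles the large coefficients ($|a_j|>\vp_1$) by Elton's near unconditionality, and pays an additive error $n\vp_n$ for the first $n$ terms of each small-coefficient layer. Your appeal to suppression unconditionality of the spreading models and to interval projections of $(x_n)$ does not substitute for this on the $y$ side. (ii) The spreading-model upper bound for $y$ only applies to combinations of length controlled by the starting index; the paper makes this quantitative via the number $l(\vp)$ (the maximal length of a normalized combination with all coefficients $\ge\vp$), whose finiteness requires the preliminary reduction to the case where $(\yt_n)$ is not equivalent to the $c_0$ basis. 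Your proposal acknowledges that "getting the bookkeeping to close" is the real content, but that bookkeeping — magnitude layering, Lemma \ref{L:1.7}, Elton, and $l(\vp)$ — is precisely the proof, and it is not supplied.
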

\begin{remark} Using the arguments in \cite{Sch2} one can actually show that under the assumption of Lemma
\ref{L:1.8} there  is a subsequence $(n_k)$ of $\N$ so that the map $x_{n_k}\mapsto y_{n_k}$ extends to a linear bounded and strictly singular operator.
\end{remark}

Before proving Lemma \ref{L:1.8} we will need the following
\begin{lem}\label{L:1.7} Assume $0<p<q$  and define for $\vp>0$ 
\begin{equation}\label{E:1.7.1}
\Delta_{(p,q)}(\vp)= \sup \left\{ \Big\|\sum_{i=1}^\infty a_i  e_i \Big\|_{f^q}:   |a_i|\le \vp,\, i=1,2\ldots , \text{ and }  \Big\|\sum_{i=1}^\infty a_i  e_i \Big\|_{f^p} \le 1\right\}.
\end{equation}
Then 
\begin{equation}\label{E:1.7.2}
\lim_{\vp\searrow 0} \Delta_{(p,q)}(\vp)=0.
\end{equation}
\end{lem}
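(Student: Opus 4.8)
\textbf{Plan for proving Lemma \ref{L:1.7}.}
The plan is to estimate $\|\sum a_i e_i\|_{f^q}$ directly from the constraint $\|\sum a_i e_i\|_{f^p}\le 1$ together with the uniform bound $|a_i|\le\vp$, and show this estimate tends to $0$ as $\vp\searrow 0$. Since the $\|\cdot\|_{f^q}$-norm only depends on the decreasing rearrangement, I may assume $a_1\ge a_2\ge\cdots\ge 0$, so $\|\sum a_i e_i\|_{f^q}=\sum_{i}a_i/f^q(i)$ and similarly for $p$. First I would split the index set at a level $N=N(\vp)$ to be chosen: for the tail $i>N$ I use that the partial sums are controlled, namely for every $l$ we have $\sum_{i=1}^l a_i\le f^p(l)$, and Abel summation against the decreasing weights $1/f^q(i)$ gives $\sum_{i>N}a_i/f^q(i)\le \sum_{i>N}\big(f^p(i)-f^p(i-1)\big)/f^q(i)$, which is the tail of the convergent series $C(p,q)$ appearing in the proof of Lemma \ref{L:1.6}; hence this tail is smaller than any prescribed $\eta>0$ once $N$ is large enough, \emph{independently of $\vp$}.

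For the head $i\le N$ I would use the crude bound $a_i\le\vp$ together with $\sum_{i=1}^N 1/f^q(i)\le N$, giving $\sum_{i\le N}a_i/f^q(i)\le \vp N$. Combining the two pieces, $\Delta_{(p,q)}(\vp)\le \vp N+\eta$ for every choice of $N$ large enough (depending only on $\eta$). Thus given $\eta>0$, fix such an $N$; then $\limsup_{\vp\searrow 0}\Delta_{(p,q)}(\vp)\le \eta$, and since $\eta>0$ was arbitrary, $\lim_{\vp\searrow 0}\Delta_{(p,q)}(\vp)=0$, which is \eqref{E:1.7.2}.

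The only delicate point is making sure the tail estimate is genuinely uniform in $\vp$: the Abel-summation bound $\sum_{i>N}a_i/f^q(i)\le\sum_{i>N}\big(f^p(i)-f^p(i-1)\big)/f^q(i)$ must be justified carefully from the constraint $\|\sum a_i e_i\|_{f^p}\le1$ (i.e. $\sum_{i=1}^l a_i\le f^p(l)$ for all $l$) using that $1/f^q(i)$ is decreasing and that $f^p(i)-f^p(i-1)\to 0$; here one effectively reuses the extremal-vector argument of Lemma \ref{L:1.6}, which shows that among sequences with $\|\cdot\|_{f^p}\le 1$ the $\|\cdot\|_{f^q}$-norm of the tail is maximized by the vector with coordinates $f^p(i)-f^p(i-1)$. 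Once that uniformity is in hand the rest is the elementary two-regime split above, so I do not expect any real obstacle beyond bookkeeping. Note that the uniform bound $|a_i|\le\vp$ is only used in the head, exactly where the finitely many terms would otherwise be uncontrolled; the tail needs no such hypothesis.
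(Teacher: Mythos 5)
Your overall head/tail strategy is workable, but the tail estimate — which you yourself flag as the delicate point — is stated incorrectly, and the justification you offer for it does not hold. First, a small conflation: for a decreasing nonnegative sequence one has $\|\sum_i a_ie_i\|_{f^q}=\max_l\frac1{f^q(l)}\sum_{i\le l}a_i$, not $\sum_i a_i/f^q(i)$; the latter is the larger norm $\tn\cdot\tn_{f^q}$ of Definition \ref{D:1.5}. Since you only need an upper bound this is harmless (you are in effect proving the stronger statement with $\tn\cdot\tn_{f^q}$), but the equality as written is false. The real problem is the inequality $\sum_{i>N}a_i/f^q(i)\le\sum_{i>N}\big(f^p(i)-f^p(i-1)\big)/f^q(i)$. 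The extremal-vector argument in Lemma \ref{L:1.6} identifies the maximizer of the \emph{full} sum $\sum_{i\ge 1}a_i/f^q(i)$; it does not say that the \emph{tail} sum is maximized by the tail of $x^{(L)}$, and in fact it is not. Carrying out the Abel summation with $B_l=\sum_{i=N+1}^l a_i\le f^p(l)-A_N$ (where $A_N=\sum_{i\le N}a_i$) gives
\begin{equation*}
\sum_{i>N}\frac{a_i}{f^q(i)}\ \le\ \sum_{i>N}\frac{f^p(i)-f^p(i-1)}{f^q(i)}\ +\ \frac{f^p(N)-A_N}{f^q(N+1)},
\end{equation*}
and the boundary term cannot be dropped: taking $a_i=\frac{f^p(M)}{M}$ for $i\le M$ and $0$ afterwards (which has $\|\cdot\|_{f^p}$-norm $1$) with $M$ a fixed multiple of $N$, the tail sum is of order $f^{p-q}(N)$, whereas the series $\sum_{i>N}(f^p(i)-f^p(i-1))/f^q(i)$ is of order $\frac{p}{q-p}f^{p-q}(N)$; for $q>2p$ the former exceeds the latter for large $N$. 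So the step as stated would fail. The gap is repairable: the extra term $f^p(N)/f^q(N+1)\le f^{p-q}(N)$ also tends to $0$ as $N\to\infty$, independently of $\vp$, so your decomposition $\Delta_{(p,q)}(\vp)\le\vp N+\eta_N$ with $\eta_N\to 0$ still closes once you keep that term.

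For comparison, the paper avoids all of this by never passing to the summed (triple) norm. It works directly with $\|\cdot\|_{f^q}$ as a maximum: if the maximum is attained at some $n$, then either $n\le n_\eta$, in which case $\frac1{f^q(n)}\sum_{s\le n}|a_{i_s}|\le\vp\,n/f^q(n)\le\eta$ by the smallness of $\vp$, or $n>n_\eta$, in which case $\frac1{f^q(n)}\sum_{s\le n}|a_{i_s}|=\frac1{f^{q-p}(n)}\cdot\frac1{f^p(n)}\sum_{s\le n}|a_{i_s}|\le f^{p-q}(n)\le\eta$. This is a two-line argument needing no Abel summation and no extremal vectors; your route, once repaired, proves a slightly stronger conclusion (for $\tn\cdot\tn_{f^q}$) at the cost of exactly the bookkeeping that tripped you up.
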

\begin{proof}
Let $\eta>0$ be arbitrary and choose $n_\eta\in\N$ so that 
$\frac1{f^{q-p}(n)}\le \eta$, for all $n\le n_\eta$, and then choose 
$$\vp=\eta \min_{n\le n_\eta} \frac{f^q(n)}n.$$
For any $(a_i)\in c_{00}$, with $|a_i|\le \vp$, for $i\kin\N$, and $\|\sum_{i=1}^\infty a_i e_i\|_{f^p}\le 1$ it follows therefore that, for some choice of $n\in\N$ and $i_1<i_2<\ldots i_n$ in $\N$,
 we have
$$
\Big\|\sum_{i=1}^\infty a_i e_i\Big\|_{f^q}=
\frac1{f^q(n)} \sum_{s=1}^n |a_{i_s}|
\le  \begin{cases} \vp \frac{n}{f^q(n)} \le \eta  &\text{ if $n\le n_\eta$}\\
   \frac1{f^{q-p}(n)}  \|\sum_{i=1}^\infty a_i e_i\|_{f^p}\le \eta &\text{ if $n>n_\eta$,} 
    \end{cases}
$$
which verifies our claim.
\end{proof}

\begin{proof}[Proof of Lemma \ref{L:1.8}]
We can assume that $(\yt_n)$ is not equivalent  to the $c_0$ unit vector basis. Otherwise we may replace
the norm on $[y_n:n\in\N]$ by
$$\Btn \sum a_i y_i\Btn=\Big\| \sum a_i y_i\Big\|+ \Big\|\sum a_ie_i\Big\|_{f^q} \text{ if $(a_i)\in c_{00}$}.$$
We can therefore assume that for every $\vp>0$
the number 
$$l(\vp)=\max\left\{ l: \begin{matrix} \text{ there are } (a_i)_{i=1}^l,\,\,|a_i|\ge \vp,\,  i=1,2 \ldots l,\,     \text{  and }    \\
                 n_1<n_2<\ldots n_l, \text{ so that } \,\,\Big\|\sum_{i=1}^l a_i y_{n_i}\Big\|\le 1 \end{matrix}\right\}$$
exists.

Let $r=(p+q)/2$.
By Lemma \ref{L:1.7}
we can choose a sequence $(\vp_n)\subset (0,1)$ so that
\begin{equation}\label{E:1.8.2} 
\sum_{n\in\N} \Delta_{(p,r)}(c\vp_n/6) \le 1 \text{ and } \sum_{n=1}^\infty n\vp_n \le 1.
\end{equation}

Using the {\em Schreier unconditionality} of basic sequences  \cite{Od} (see also \cite{DOSZ} for a more general statement), the fact that $(\xt_n)$ is the spreading model
of $(x_n)$,  and our assumption \eqref{E:1.8.1},
we can assume, after passing to  simultaneous  subsequences  of $(x_n)$ and $(y_n)$,  if necessary, that
 for all $(a_i) \in c_{00}$ and all finite $F\subset \N$, with $n\le \min F$  and $\#F \le  l(\vp_{n+1})$ we have 
 \begin{align}\label{E:1.8.4} 
          \Big\|  \sum_{i\in F}  a_i  e_i\Big\|_{f^p}     & \le       \frac1c  \Big\|  \sum_{i\in F}  a_i \xt_i\Big\|   \le \frac2c \Big\|\sum_{i\in F} a_i x_i\Big\|\le \frac6c \Big\| \sum_{i=1}^\infty a_i x_i\Big\|,
\intertext{and by using the fact that $(\yt_n)$ is the spreading model
of $(y_n)$,  our assumption \eqref{E:1.8.1},  and Lemma \ref{L:1.6},  we can assume that  for some constant $C_3$ and 
 for all finite $F\subset \N$, with $n\le \min F$  and $\#F \le  l(\vp_{n+1})$,  and all $(a_i)_{i\in F}$ we have }
   \label{E:1.8.5}  \Big\|  \sum_{i\in F}  a_i  y_i\Big\|  &\le    2\Big\|  \sum_{i\in F}  a_i  \yt_i\Big\| \le 2C     \Btn  \sum_{i\in F}  a_i  \Btn_{f^q} \le C_3 \Big\|  \sum_{i\in F}  a_i  e_i\Big\|_{f^r}.
\end{align}

By Elton's {\em near unconditionality}  \cite{El} (see also \cite[Theorem 6]{DOSZ})  and the fact that
 $l(\vp_1)$ is finite we can assume, after passing to subsequences,  if necessary,
that there are constants $C_1$ and $C_2$
so that for every $(a_i)\in c_{00}$, with $\big\|\sum a_i x_i \big\|\le 1$, it follows that
\begin{equation}\label{E:1.8.6}
\Big\| \sum_{j=1, |a_j| > \vp_1 }^\infty a_j y_j\Big\|\le C_1\Big\| \sum_{j=1, |a_j| > \vp_1}^\infty a_j x_j\Big\|\le C_2\Big\| \sum_{j=1}^\infty a_j x_j\Big\|\le C_2.
\end{equation}

Now let $(a_j)\in c_{00}$ and assume that $\big\|\sum_{i=1}^n a_i x_i\| =1$.
Then, by \eqref{E:1.8.6}, 
$$\Big\| \sum_{j=1}^\infty a_j y_j\Big\|\le \Big\| \sum_{ |a_j|\le \vp_1}a_j y_j\Big\|+  \Big\| \sum_{|a_j|> \vp_1} a_j y_j\Big\|
\le  \Big\| \sum_{ |a_j|\le \vp_1}^\infty a_j y_j\Big\|+ C_2 \Big\| \sum_{j=1}^\infty a_j x_j\Big\|
$$
and 
\begin{align*}
\Big\| \sum_{j=1, |a_j|\le \vp_1}^\infty a_j y_j\Big\| 
&\le \sum_{n=1}^\infty \Big\|\sum_{\vp_{n+1} <|a_j|\le \vp_n }a_j y_j\Big\|\\
&\le   \sum_{n=1}^\infty \Bigg[ n\vp_n + \Big\|\sum_{n<j, \vp_{n+1} <|a_j|\le\vp_n} a_j y_j\Big\|\Bigg]\\
&\le 1+ C_3 \sum_{n=1}^\infty\Big\|\sum_{n<j, \vp_{n+1} <|a_j|\le\vp_n} a_j e_j\Big\|_{f^r} \text{ (by \eqref{E:1.8.2} and  \eqref{E:1.8.5})}\\
&= 1+ \frac{6C_3}c\sum_{n=1}^\infty \Big\|\sum_{n<j, \vp_{n+1} <|a_j|\le\vp_n} \frac{c}6a_j e_j\Big\|_{f^r}.
\intertext{ Now it follows from \eqref{E:1.8.4} that 
$$  \Big\|\sum_{n<j, \vp_{n+1} <|a_j|\le\vp_n}  \frac{c}6  a_j e_j\Big\|_{f^p} \le 1$$ 
and thus \eqref{E:1.8.2}  and the definition of $\Delta_{p,r}$ yield that}
\Big\| \sum_{j=1, |a_j|\le \vp_1}^\infty a_j y_j\Big\| 
&\le 1+\frac{6C_3}c .
\end{align*}
We proved therefore that if $\big\|\sum_{i=1}^\infty a_i x_i\big\|\le 1$ then
$$\Big\|\sum_{i=1}^\infty a_i y_i\Big\|\le C_2+ 1+  \frac{6C_3}c,$$
which finishes the proof of our claim.
\end{proof}

We finally want to compare the norms $\tn\cdot\tn_f$ and $\|\cdot\|_S$ and first prove the following Lemma.

\begin{lem}\label{L:1.9}
 For every $x^*=(\xi_j)\in B_{S^*}$ and $n\in\N$, we have  that
 \begin{equation}\label{E:1.9.1}
 \xi_n^\#\le \frac1{f(n)}.
 \end{equation}
 \end{lem}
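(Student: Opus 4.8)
The plan is to reduce \eqref{E:1.9.1} to an upper estimate for the norm of a sum of basis vectors in $S$, and to deduce that estimate from two elementary features of $f$: its concavity and its submultiplicativity. Fix $x^*=(\xi_j)\in B_{S^*}$ and $n\in\N$, and choose indices $m_1<m_2<\dots<m_n$ with $|\xi_{m_j}|\ge\xi_n^\#$ for $j=1,\dots,n$ (such indices exist by the definition of $\xi_n^\#$). Picking signs $\sigma_j\in\{-1,1\}$ with $\sigma_j\xi_{m_j}=|\xi_{m_j}|$ and setting $x=\sum_{j=1}^n\sigma_j e_{m_j}$, the condition $x^*\in B_{S^*}$ yields
\[
n\,\xi_n^\#\ \le\ \sum_{j=1}^n|\xi_{m_j}|\ =\ \la x^*,x\ra\ \le\ \|x\|_S .
\]
Since $(e_i)$ is $1$-unconditional and $1$-spreading, $\|x\|_S=\bigl\|\sum_{j=1}^n e_j\bigr\|_S$, so it is enough to prove that $\bigl\|\sum_{j=1}^n e_j\bigr\|_S\le n/f(n)$ for every $n\in\N$.

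To prove this I would use the standard realization of $\|\cdot\|_S$ as the increasing limit of the norms $\|\cdot\|^{(k)}$ on $\coo$ given by $\|y\|^{(0)}=\|y\|_\infty$ and $\|y\|^{(k+1)}=\max\bigl(\|y\|_\infty,\ \sup\tfrac1{f(l)}\sum_{i=1}^l\|E_i(y)\|^{(k)}\bigr)$, the supremum being over $l\in\N$ and $E_1<\dots<E_l$ (see \cite{Sch1}), and show by induction on $k$ that $\bigl\|\sum_{j=1}^n e_j\bigr\|^{(k)}\le n/f(n)$ for all $n$; letting $k\to\infty$ then gives the claim. The case $k=0$ holds since $f(n)\le n$. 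For the inductive step, as $\|\cdot\|_\infty=1\le n/f(n)$ it suffices to bound $\tfrac1{f(l)}\sum_{i=1}^l\bigl\|E_i\bigl(\sum_j e_j\bigr)\bigr\|^{(k)}$ for a fixed $l$ and $E_1<\dots<E_l$. Discarding those $E_i$ disjoint from $\{1,\dots,n\}$ only enlarges this quantity (zero summands are dropped and $l$ decreases), so we may assume each $E_i$ meets $\{1,\dots,n\}$ in $N_i\ge1$ points, whence $l\le n$ and $\sum_{i=1}^l N_i=n$. By $1$-spreading and the induction hypothesis, $\bigl\|E_i\bigl(\sum_j e_j\bigr)\bigr\|^{(k)}=\bigl\|\sum_{j=1}^{N_i}e_j\bigr\|^{(k)}\le N_i/f(N_i)$, so the quantity is at most $\tfrac1{f(l)}\sum_{i=1}^l N_i/f(N_i)$.

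It remains to check the numerical inequality
\[
\frac1{f(l)}\sum_{i=1}^l\frac{N_i}{f(N_i)}\ \le\ \frac{n}{f(n)}\qquad\text{whenever }N_i\ge1,\ \sum_{i=1}^l N_i=n,
\]
which I would rewrite as $\sum_{i=1}^l\tfrac{N_i}{n}\cdot\tfrac{f(n)}{f(N_i)}\le f(l)$. Here the two properties of $f(x)=\log_2(x+1)$ enter. First, $f$ is submultiplicative on $[1,\infty)$ (a standard property established in \cite{Sch1}), so since $1\le n/N_i$ we have $f(n)=f\bigl(N_i\cdot\tfrac{n}{N_i}\bigr)\le f(N_i)\,f\bigl(\tfrac{n}{N_i}\bigr)$, i.e. $\tfrac{f(n)}{f(N_i)}\le f\bigl(\tfrac{n}{N_i}\bigr)$. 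Second, $f$ is concave, so by Jensen's inequality applied with the weights $w_i=N_i/n\ge0$ (which sum to $1$),
\[
\sum_{i=1}^l\frac{N_i}{n}\cdot\frac{f(n)}{f(N_i)}\ \le\ \sum_{i=1}^l w_i\,f\Bigl(\frac{n}{N_i}\Bigr)\ \le\ f\Bigl(\sum_{i=1}^l w_i\cdot\frac{n}{N_i}\Bigr)\ =\ f(l) .
\]
This completes the argument. The only genuine obstacle is isolating the right elementary properties of $f$ — submultiplicativity to dispatch each term individually and concavity to recombine the terms against the weights $N_i/n$; everything else is routine. Alternatively, the value $\bigl\|\sum_{j=1}^n e_j\bigr\|_S=n/f(n)$ can simply be quoted from \cite{Sch1}.
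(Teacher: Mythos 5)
Your proof is correct and follows essentially the same route as the paper: both arguments reduce the claim to the duality pairing of $x^*$ against (a multiple of) $\sum_{j=1}^n e_{m_j}$ together with the fact that $\bigl\|\sum_{j=1}^n e_j\bigr\|_S\le n/f(n)$, which the paper simply quotes from \cite{Sch1}. The only difference is that you additionally supply a self-contained inductive proof of that normalization estimate (via submultiplicativity and concavity of $f$), which is a correct and welcome supplement but not a different method.
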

  \begin{proof}
By the 1-unconditionality of both norms in $S$ and $S^*$  
we need to prove \eqref{E:1.9.1} only for
non negative sequences $x^*=(\xi_j)$ in $c_{00}$.  
Let $E=\{j_1, j_2,\ldots, j_n\}\subset \N$ have $n$ elements,
so that 
$$\xi_{j_1}\ge \xi_{j_2}\ge \ldots \xi_{j_n}=\xi_{n}^{\#},$$   
and put
$$y^*=\xi_n^\#\sum_{s=1}^n  e^*_{j_s}.$$
Since $S^*$ is supression 1-unconditional, it follows that 
$\|y^*\|\le 1$, and since
$$y=\frac{f(n)}{n}\sum_{s=1}^n e_{j_s}\in S_S,$$
(see \cite{Sch1}) it follows that
$$1\ge \la y^*,y\ra = f(n)\xi_n^\#,$$
which proves our claim.
\end{proof}

\begin{cor}\label{C:1.10} For $x\in c_{00}$ we have
\begin{equation}\label{E:1.10.1}
\|x\|_S\le \tn x\tn_f.
\end{equation}
\end{cor}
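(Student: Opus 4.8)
The plan is to use duality together with Lemma \ref{L:1.9} and the classical rearrangement inequality. Since $S$ is reflexive with a $1$-unconditional basis $(e_i)$, for $x\in c_{00}$ we have $\|x\|_S=\sup\{\la x^*,x\ra : x^*\in B_{S^*}\}$, and because both $(e_i)$ in $S$ and $(e_i^*)$ in $S^*$ are $1$-unconditional, it suffices to bound $\la x^*,x\ra$ for $x=(x_j)$ and $x^*=(\xi_j)$ with non-negative, finitely supported coordinates. So fix such an $x^*\in B_{S^*}$.

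Next I would invoke the Hardy--Littlewood rearrangement inequality for finite non-negative sequences: $\la x^*,x\ra=\sum_j \xi_j x_j \le \sum_j \xi_j^\# x_j^\#$, where $(\xi_j^\#)$ and $(x_j^\#)$ are the decreasing rearrangements of $(\xi_j)$ and $(x_j)$ as in Definition \ref{D:1.5}. Then, applying the pointwise estimate $\xi_j^\#\le 1/f(j)$ from Lemma \ref{L:1.9}, I get
\[
\la x^*,x\ra \le \sum_{j=1}^\infty \xi_j^\# x_j^\# \le \sum_{j=1}^\infty \frac{1}{f(j)} x_j^\# = \tn x\tn_f .
\]
Taking the supremum over all $x^*\in B_{S^*}$ yields $\|x\|_S\le \tn x\tn_f$, which is \eqref{E:1.10.1}.

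There is essentially no serious obstacle here; the only points that require a moment's care are (i) the justification that the supremum defining $\|x\|_S$ can be restricted to non-negatively supported functionals and vectors, which follows from the suppression-$1$-unconditionality of the bases of $S$ and $S^*$ already used in the proof of Lemma \ref{L:1.9}, and (ii) the fact that the rearrangement inequality pairs the $j$-th largest coordinate of $x^*$ against the $j$-th largest coordinate of $x$, so that Lemma \ref{L:1.9} (which controls exactly $\xi_j^\#$) is what is needed. Since $x^*$ and $x$ have finite support, all sums are finite and the inequality is elementary.
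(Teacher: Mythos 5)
Your proof is correct and is exactly the intended derivation: the paper states Corollary \ref{C:1.10} as an immediate consequence of Lemma \ref{L:1.9}, obtained precisely by dualizing, reducing to non-negative coordinates via $1$-unconditionality, and pairing decreasing rearrangements so that the bound $\xi_j^\#\le 1/f(j)$ applies. No discrepancy with the paper's approach.
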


\subsection{Yardstick vectors}

\

The following type of vectors were introduced in \cite{KL}.
\begin{defin}\label{D:1.11} (Yardstick Vectors)

 We call a finite or infinite sequence of natural number $m_1,m_2, m_3, \ldots $ {\em admissible},
  if  for any  $i$, for which $m_i$ exists, $m_{i}$ is even and is  a  multiple of the product
   $\prod_{A\subset{1,2,\ldots i-1} }\Big(\sum_{j\in A} m_j \Big)$  (as usual $\prod_{\emptyset} =1$).
Note that any subsequence of admissible sequences is also admissible.

By induction we define the vector $y (m_1,m_2,\ldots m_k)$ for each $k$ and each  admissible finite sequence
$(m_1,m_2,\ldots,m_k)\subset \N$;  the support of   $y (m_1,m_2,\ldots m_k)$ will be the interval $[1,\sum_{i=1}^k m_i]$.

If $k=1$ we put for $m\in\N$
$$y (m)= \frac{f(m)}{m}  \sum_{i=1}^m e_i.$$
Assume  that 
$y (m_1,m_2,\ldots m_{k'})$ has been defined for each $k'<k$ and each admissible  sequence
$(m_1,m_2,\ldots,m_{k'})\subset \N$.

From our induction hypothesis the  support of $y (m_1,m_2,\ldots, m_{k-1})$  is $[1,m_1+m_2+\ldots +m_{k-1}]$ and we write   $y (m_1,m_2,\ldots, m_{k-1})$ as
 $$y(m_1,m_2, \ldots, m_{k-1}) =\sum_{i=1}^{m_1+m_2+\ldots+m_{k-1}} a_i e_i.$$
Now we define $\tilde y$ to be the vector, which has the same distribution as 
 $y(m_1,m_2,\ldots,, m_{k-1}) $, and whose support
 is 
 $$\supp(\tilde y)= 
 \left\{1+ (i-1) \frac{m_{k}}{m_1+m_2+\ldots m_{k-1}} : i=1,2\dots, \sum_{j=1}^{k-1} m_j\right\} $$
(i.e we spread out the coordinates of $y(m_1,m_2, m_{k-1}) $, so that between any two successive  non zero coordinates there are  $\frac{m_{k}}{m_1+m_2+\ldots m_{k-1}}$ zeros).

 Then we  define 
 $$y(m_1,m_2,\ldots, m_{k}) =\tilde y +\frac{f(m_k)}{m_k} 
 \sum_{i\in[1, m_1+m_2++\ldots m_k]\setminus\supp(\tilde y)}e_i$$
(i.e. we are replacing the zeros on the interval $[1,m_1+m_2+\ldots m_k]$ by the value $f(m_k)/m_k$)).  
 So,  for example, $y(m_1)$ and $y(m_1,m_2)$ are the following vectors:
 \begin{align*}
 y(m_1)&=\underbrace{\Big(\frac{f(m_1)}{m_1},\frac{f(m_1)}{m_1},\ldots,\frac{f(m_1)}{m_1}\Big)}_{m_1\text{ times}}\\
 y(m_1,m_2)&=\Big(\underbrace{\frac{f(m_1)}{m_1}, 
 \underbrace{\frac{f(m_2)}{m_2},\ldots,\frac{f(m_2)}{m_2}}_{m_2/m_1\text{ times}},\ldots,
 \frac{f(m_1)}{m_1}, 
 \underbrace{\frac{f(m_2)}{m_2},\ldots,\frac{f(m_2)}{m_2}}_{m_2/m_1\text{ times}}}_{ m_1\text{times}}\Big).
 \end{align*}

If $\xb=(x_n)_{n\in \N}$ is a block sequence in $\coo$, and if $(m_1,\ldots m_k)\subset \N$ is admissible, we define
$y_{\xb}(m_1,m_2,\ldots,m_k)  $ to be a linear combination of the $x_n$' s with the same distribution as $y(m_1,m_2,\ldots,m_k)$ has on the $e_n$'s,  i.e.
$$y_{\xb}(m_1,m_2,\ldots,m_k)=\sum_{i\in\supp(y(m_1,m_2,\ldots,m_k))} a_i x_i,$$
where the $a_i$ are such that
$$y(m_1,m_2,\ldots,m_k)=\sum_{i\in\supp(y(m_1,m_2,\ldots,m_k))} a_i e_i.$$
\end{defin}
It  follows from the arguments in \cite{KL}  that for $k\in\N$ and $\vp>0$  one can find $m_1<m_2<\ldots m_k$ in $\N$ so that 
$\|y(m_1,m_2,\ldots m_k)\|_S\le 1+\vp$.  Since $y(m_1,m_2,\ldots m_k)$ is the sum of disjointly supported  vectors $z_1, z_2,\ldots z_k$, 
with   $z_i$ having the  same distribution as $\frac{f(m_i)}{m_i}\sum_{j=1}^{m_i}e_j$,  for $i=1,2\ldots k$,  (and thus $\|z_j\|=1$, by \cite{Sch1}),
it follows that $\ell_\infty^k$, $k\in\N$ are uniformly represented in $S$. Something stronger is true. Using similar arguments  as in \cite{KL} it is actually possible
 to prove under appropriate growth conditions on $(m_i)$ that the sequence
 $\big(y(m_1,m_2,\ldots m_k):k\kin\N\big)$ is uniformly bounded in $S$.  For completeness we will present a self contained proof of this fact. 
First we prove the following lemma,  which will serve as the induction step for choosing the sequence $(m_i)$.

\begin{lem}\label{L:1.12}
 Assume we are given $k,m\in\N$, with $k<m$, $c\ge 1$ and some $\vp\in(0, f(2)-1)/f(2))$ satisfying the following conditions:
 \begin{align}\label{E:1.12.1}
 &\text{$m$ is divisible by $k$}\\
\label{E:1.12.2} &f(m)\ge C \max \left(  \frac{50}{\vp^2},      \frac{f(l_0)f(l_0-1)}{f(l_0)-f(l_0-1)}\right)\\
\intertext{where $l_0=\min\{l\in\N: f(l)\ge 6\}$}
\label{E:1.12.3} &\frac{f(m)}{f(m/k)}\le 1+\frac\vp6.
 \end{align}
 Assume  further $(j_s)_{s=1}^m\subset \N$ and $(x_s)_{s=1}^m\subset S$ have  the property that
 \begin{align}\label{E:1.12.4}
 &e_{j_1}< x_1<e_{j_2}< x_2 \ldots e_{j_{m-1}}<x_{m-1}<e_{j_m}<x_m, \text{ and }\\
 \label{E:1.12.5}
 &\|x_s\|\le \frac{C}{m}, \text{ for $s=1,2\ldots m$.}
 \end{align} 
 Then it follows that
 \begin{align}\label{E:1.12.6}
 & \Big\|\sum_{s=1}^m \frac{f(m)}{m} e_{j_s} + x_s\Big\|_S\le  {C(1+\vp)}\text{ and }\\
  \label{E:1.12.7}&\Big\|\sum_{s=(i-1)(m/k) +1}^{im/k} \frac{f(m)}{m} e_{j_s} + x_s\Big\|_S \le \frac{C(1+\vp)}{k} \text{ for $i=1,2,3,\ldots,k$}.
 \end{align}
 In particular the vectors
 $$y_i= \frac{k}{C(1+\vp)} \sum_{s=(i-1)(m/k) +1}^{im/k}\frac{f(m)}{m} e_{j_s} + x_s,\text{  for $i=1,2,\ldots ,k$}$$
 are in $B_S$ and $(y_i)$ is $C(1+\vp)$ -equivalent to the unit vector basis in $\ell_1^k$.
 
 \end{lem}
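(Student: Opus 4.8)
The plan is to reduce everything to the single estimate \eqref{E:1.12.6} and prove that by induction. Write $w_s=\frac{f(m)}{m}e_{j_s}+x_s$, so the vector of \eqref{E:1.12.6} is $v=\sum_{s=1}^m w_s$ and $v_i:=\sum_{s=(i-1)(m/k)+1}^{im/k}w_s$, $y_i=\frac{k}{C(1+\vp)}v_i$. Granting \eqref{E:1.12.6} and \eqref{E:1.12.7}, the ``in particular'' clause is immediate: $\|y_i\|_S\le1$ is \eqref{E:1.12.7} rescaled, the upper $\ell_1^k$-estimate is $\|\sum_ia_iy_i\|\le\sum_i|a_i|\|y_i\|\le\sum_i|a_i|$, and the lower one follows by projecting onto the (norm-one, $x_s$-annihilating) coordinate subspace spanned by $\{e_{j_s}\}$ and applying \eqref{E:1.2} with the partition of $[1,\sum_sm_s]$ into the $m$ singletons $\{j_s\}$:
\[
\Big\|\sum_ia_iv_i\Big\|_S\ge\frac{f(m)}{m}\Big\|\sum_ia_i\!\!\sum_{s\in\text{block }i}\!\!e_{j_s}\Big\|_S\ge\frac{f(m)}{m}\cdot\frac1{f(m)}\sum_{s=1}^m|a_{i(s)}|=\frac1k\sum_i|a_i|,
\]
giving the constant $C(1+\vp)$. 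Next, \eqref{E:1.12.7} is \eqref{E:1.12.6} at the parameter $m/k$: factoring $\frac{f(m)}{m}=\frac1k\cdot\frac{f(m)}{f(m/k)}\cdot\frac{f(m/k)}{m/k}$, each $v_i$ equals $\frac1k\frac{f(m)}{f(m/k)}$ times a vector of exactly the shape of \eqref{E:1.12.6} at scale $m/k$ — here \eqref{E:1.12.1} makes $m/k$ an integer and $f(m/k)\le f(m)$ keeps the perturbations $\le C/(m/k)$ — and \eqref{E:1.12.3} converts the factor $f(m)/f(m/k)$ into $1+\vp/6$, which is absorbed because the estimate at scale $m/k$ can be run with error strictly below $\vp$.

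For \eqref{E:1.12.6} I would prove, by induction on $r\in\{1,\dots,m\}$, a bound of the form
\[
\Big\|\sum_{s\in I}w_s\Big\|_S\le\frac rm\,\max\!\Big(\tfrac{f(m)}{f(r)},\,C\Big)\,(1+\beta_r)\qquad\text{for every interval }I\subset\{1,\dots,m\},\ |I|=r,
\]
with $(\beta_r)$ a nondecreasing sequence of errors, all $\le\vp$, to be determined; the case $r=m$ is \eqref{E:1.12.6} since $C\ge1$. The base case $r=1$ is the trivial $\|w_s\|\le\frac{f(m)}{m}+\frac Cm=\frac{f(m)}{m}(1+\tfrac C{f(m)})$ with $\beta_1=C/f(m)\le\vp^2/50$ by \eqref{E:1.12.2}. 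For the inductive step fix $I$, $|I|=r$, and a norming functional; coordinate functionals are controlled by $\|v\|_\infty\le f(m)/m$, so assume $\phi=\frac1{f(l)}\sum_{j=1}^l\phi_j$ with $\mathrm{supp}\,\phi_1<\dots<\mathrm{supp}\,\phi_l$ and each $\phi_j$ of lower complexity; put $E_j=\mathrm{supp}\,\phi_j$ and let $I_j\subset I$ index the $w_s$ meeting $E_j$. The two building blocks are: (i) the \emph{yardstick estimate} $\frac{f(m)}{m}\cdot\frac1{f(l)}\sum_j\phi_j(\sum_{s:\,j_s\in E_j}e_{j_s})\le\frac{f(m)}{m}\cdot\frac1{f(l)}\sum_jg(a_j)$ with $g(t)=t/f(t)$, $a_j=\#\{s: j_s\in E_j\}$, where the key point is $\sum_ja_j\le r$ since each $e_{j_s}$ lies in at most one $E_j$, so that the concavity of $g$ and the elementary inequality $f(l)f(r/l)\ge f(r)$ — proved by noting $\psi(l)=f(l)f(r/l)$ is symmetric under $l\mapsto r/l$, equals $f(r)$ at the endpoints $l\in\{1,r\}$, and is $\ge f(r)$ in between because $f$ is concave — bound this by $\frac rm\cdot\frac{f(m)}{f(r)}$; and (ii) the handling of the $x_s$, where the at most $l-1$ straddlers of boundaries are split off (their pieces of a single $x_s$ summing over all $j$ to at most $f(2)\cdot\frac Cm\cdot\frac l{f(l)}\le f(2)C/f(m)\le\vp^2/50$ by \eqref{E:1.12.2}), the $x_s$ lying entirely inside one $E_j$ are absorbed via the induction hypothesis applied to the corresponding subintervals ($|I_j|<r$), and the ranges ``$l$ small'' and ``$l$ large'' are treated separately (for $l$ large the prefactor $1/f(l)$ is itself $\lesssim\vp^2$, so crude bounds suffice). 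Assembling these and using $f(m)/m$ astronomically small gives the inductive estimate with the $\beta_r$ telescoping to $\le\vp$.

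The step I expect to be the main obstacle is exactly the inductive step just described, and more precisely the requirement that the error \emph{telescope rather than accumulate}. The naive triangle estimate $\|v\|_S\le\|\tfrac{f(m)}{m}\sum_se_{j_s}\|_S+\|\sum_sx_s\|_S\le1+C$ is genuinely too weak when $C$ is close to $1$: one really can arrange $\|\sum_sx_s\|_S$ to be as large as $\approx C$ (take the $x_s$ to be widely spread $\ell_\infty$-averages), and any clean splitting of a norming functional into a ``yardstick part'' and a ``perturbation part'' makes each part separately $\lesssim$ the target, hence their sum $\lesssim$ twice the target — a spurious factor $2$. Beating this factor requires exploiting that a single norming functional cannot extract a near-optimal amount from both the spread-out yardstick part and the spread-out perturbation part simultaneously (when the yardstick part is near-extremal the level $l$ is close to $1$ or to $r$, forcing only $O(1)$ straddlers and a negligible contribution of the $x_s$; when the perturbation part is near-extremal $l$ must be enormous and the yardstick prefactor $1/f(l)$ is tiny), and it is precisely this dichotomy, together with the submultiplicativity and concavity of $f$, the largeness of $f(m)$ in \eqref{E:1.12.2}, and the uniformity \eqref{E:1.12.3}, that must be orchestrated with the stated constants $50$ and $6$ so that $\beta_m\le\vp$.
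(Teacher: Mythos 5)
Your overall architecture is the paper's: the lower $\ell_1^k$-estimate via the functional $\frac1{f(m)}\sum_s e_{j_s}^*$ and $1$-unconditionality, and then an induction on the length of an interval $I$ of indices $s$, with the main term produced by the concavity of $t/f(t)$ together with $f(l)f(n/l)\ge f(n)$, and the perturbations $x_s$ fed partly into the induction hypothesis and partly bounded crudely. You have also correctly located the crux (a single norming functional must not be allowed to act near-extremally on the yardstick part and on $\sum_s x_s$ simultaneously) and the mechanism that resolves it (a dichotomy on the level $l$ of the outer splitting). Two of your reductions are, however, detours: \eqref{E:1.12.7} should not be obtained by rescaling to ``the lemma at parameter $m/k$'' --- that forces you to re-verify \eqref{E:1.12.2} at the smaller scale with a strictly smaller $\vp'$, which the hypotheses do not quite give --- but falls out of your own interval estimate at $r=m/k$ combined with \eqref{E:1.12.3}; this is exactly how the paper gets it from its inductive statement \eqref{E:1.12.8}.

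The genuine gaps are in the inductive step. First, the shape of your induction hypothesis, $\frac rm\max\bigl(\frac{f(m)}{f(r)},C\bigr)(1+\beta_r)$ with a single multiplicative error, is not the right receptacle: in the regime where the functional's pieces not meeting any $e_{j_s}$ contribute their full budget $\approx \|\sum_{s\in I}x_s\|\le C(r+1)/m$, a multiplicative $\beta_r\le\vp$ on the max cannot absorb this when $f(m)/f(r)\lesssim C$ and $f(l)$ is only moderately large. The paper instead carries an \emph{additive} slack $\frac nm C\frac\vp3$ alongside the main term $\frac{f(m)}m\frac n{f(n)}C(1+\frac\vp3)$; this second term is calibrated precisely so that $C\frac{n+1}m$ divided by $f(l)\ge 6$ fits inside it once $n$ is large enough that $f(n)>6/\vp$ (the small-$n$ base case, $f(n)\le 6/\vp$, is a separate direct computation you will also need). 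Second, your treatment of the small-$l$ case (``only $O(1)$ straddlers, negligible contribution'') is not an argument: what is actually needed, and what the paper proves, is that for an \emph{extremal} level $l$ with $f(l)<6$ the decomposition has \emph{no} pieces disjoint from $\{j_{s_0+1},\dots,j_{s_1}\}$ at all ($l_2=0$); this follows from an extremality comparison of $\|I(x)\|_{l-1}$ with $\|I(x)\|_l$ using \eqref{E:1.12.2} (this is where the constant $\frac{f(l_0)f(l_0-1)}{f(l_0)-f(l_0-1)}$ enters), and likewise one must rule out $l_1=1$. Without these two points the induction does not close, so the proposal as written is an accurate plan with the decisive estimates still missing.
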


\begin{proof} We note that for any scalars $(a_i)_{i=1}^k$, we have
$$\big\|\sum_{i=1}^k a_i y_i\big\| \geq f(m)^{-1}\big(\sum_{s=1}^m e_{j_s}^*\big)\big(\sum_{i=1}^k a_i y_i\big)
=f(m)^{-1}\sum_{i=1}^k a_i \Big(\sum_{s=(i-1)(m/k) +1}^{im/k} e_{j_s}^*(y_i)\Big).$$
 It follows easily, assuming \eqref{E:1.12.7} and using the $1$-unconditionality of the basis, that
$(y_i)$ is $C(1+\vp)$ -equivalent to the unit vector basis in $\ell_1^k$.

To prove \eqref{E:1.12.6} and \eqref{E:1.12.7} we put 
$$x=\sum_{s=1}^m \frac{f(m)}{m} e_{j_s} + x_s.$$
We will proof by induction for each $n\kin\{1,2,\ldots m\}$,  that whenever $0\kleq s_0<s_1\kleq m$, with $s_1-s_0=n$, and
$I\subset\N$ is an interval with $j_{s_0}<I<j_{s_1+1}$ (where we let  $j_0=0$ and $j_{s_{m+1}}=\infty$), then 
\begin{equation}\label{E:1.12.8}
\| I(x)\|\le 
\frac{f(m)}{m} \frac{n}{f(n)} C\Big( 1+\frac{\vp}3 \Big)+\frac{n}{m}  C\frac{\vp}3.
\end{equation}
From that we deduce \eqref{E:1.12.6} by letting $I\!=\!\N$ and $\!n=\!m$. Moreover, if we 
put $I\!=\![j_{(i-1)(m/k)+1}, j_{i(m/k)+1}\!-\!1]$,   for $i=1,2\ldots k$, we deduce from \eqref{E:1.12.8} and  \eqref{E:1.12.3} that
\begin{align*}
\Big\|\sum_{s=(i-1)(m/k) +1}^{im/k} \frac{f(m)}{m} e_{j_s} + x_s\Big\|
 &= \|I(x)\|\\
&\le \frac{f(m)}{m} \frac{m/k}{f(m/k)} C\Big( 1+\frac{\vp}3 \Big)+\frac{m/k}{m}  C\frac{\vp}3\\
&=\frac1k \frac{f(m)}{f(m/k)} C\Big( 1+\frac{\vp}3 \Big)+\frac1k  C\frac{\vp}3\le \frac1kC(1+\vp)
\end{align*}
which implies \eqref{E:1.12.7}.

First let $n\in\N$, so that $f(n)\le\frac6{\vp}$, and let  $I\subset \N$ be an interval  with $j_{s_0}<I<j_{s_1+1}$ for some choice of  $s_0,s_1\in\{1,2\ldots,m\}$, and   $s_1-s_0=n$, $l\ge 2$.
Then
\begin{align*}
\|I(x) \|_l &\le \Big\|\sum_{s=s_0+1}^{s_1}\frac{f(m)}{m} e_{j_s}\Big\| + \Big\|\sum_{s=s_0}^{s_1}  x_{s}\|\\
&\le  \frac{f(m)}{m} \frac{n}{f(n)} +C\frac{n+1}{m} \\
&=    \frac{f(m)}{m} \frac{n}{f(n)} \Big[1+ C\frac{n+1}{n}\frac{f(n)}{f(m)}\Big]\\
 &\le            \frac{f(m)}{m} \frac{n}{f(n)} C \Big[1+  \frac{12}{\vp f(m)}\Big]   \le         \frac{f(m)}{m} \frac{n}{f(n)} C \Big[1+  \frac{\vp}{3}\Big]   \text{ (by \eqref{E:1.12.2} )}
\end{align*}
 which implies our claim for $n\in\N$, for which   $f(n)\le\frac6{\vp}$.

Assume  that our induction hypothesis is true for all $n'<n$,  $n\in\N$, with $f(n)>6/n$,  
 and  all intervals $I\subset \N$  for which there are  $j_{s_0}<I < j_{s_{1}+1}$ with $0\le s_0<s_1\le m$ and $s_1-s_0=n$.

Let $l\in\N$, $l\ge 2$,  such that $\|x\|=\|x\|_l$ (since $n\ge 2$ it follows that $\|I(x)\|_\infty< \|I(x)\|_2$, and thus $l\ge 2$).  

We choose numbers $l_1$ and $l_2$ in $\N\cup\{0\}$, with $l=l_1+l_2$,
 and intervals $E_1^{(1)}<E_2^{(1)},\ldots E^{(1)}_{l_1}$ and $E_1^{(2)}<E_2^{(2)},\ldots E^{(2)}_{l_1}$,
 so that 
 $$\bigcup_{t=1}^{l_1} E^{(1)}_t\cap \bigcup_{t=1}^{l_1} E^{(2)}_t=\emptyset \text{ and }
\bigcup_{t=1}^{l_1} E^{(1)}_t\cup \bigcup_{t=1}^{l_1} E^{(2)}_t=I,$$ 
and so that each of the $E^{(1)}_t$ contains 
at least one of the $j_s$, $s_0<s\le s_1$ and none of the    $E^{(2)}_t$ intersects with $\{j_{s_0+1}, j_{s_0+2},\ldots j_{s_1}\}$,
and so that 
\begin{equation}\label{E:1.12.9} 
\| I(x)\|_l =\frac1{f(l)}\left(\sum_{t=1}^{l_1} \|E^{(1)}_t(x)\|+ \sum_{t=1}^{l_2} \|E^{(2)}_t(x)\|\right).
\end{equation}
We note that $l_1\ge2$, otherwise it would follow that $l_1=1$ and  for all $t=1,2\ldots l_2 $
either $j_{s_0}< E^{(2)}_t<j_{s_0+1}$ or $j_{s_1}< E^{(2)}_t<j_{s_1+1}$, and thus, by \eqref{E:1.12.5}
$$
\|I(x)\|=\|I(x)\|_l \le \frac{2C}{m} + \frac1{f(l)} \|E^{(1)}_1(x)\|\le \frac{2C}{m} + \frac1{f(2)} \|I(x)\|,$$
and thus
$$\frac{f(m)}{m}\Big(1- \frac1{f(l)} \Big) \le \frac{2C}{m},$$
which contradicts \eqref{E:1.12.2} and the restrictions on $\vp$.

We can therefore apply our induction hypothesis and deduce that there are numbers 
$s_0=\st_0<\st_1<\ldots \st_{l_1}=s_1$ 
 so that for $t=1,2,\ldots l_1$ 
 \begin{equation}\label{E:1.12.10} 
 \|E^{(1)}_t(x)\|\le \frac{f(m)}{m} \frac{\st_t-\st_{t-1}}{  f(\st_t-\st_{t-1})} C\Big(1+\frac{\vp}3\Big)+ \frac{\st_t-\st_{t-1}}{m} C\frac{\vp}3.  
 \end{equation}
 Moreover it follows that
 \begin{equation}\label{E:1.12.11} 
\frac1{f(l)}\sum_{t=1}^{l_2} \|E^{(2)}_t(x)\|\le \Big\|\sum_{s=s_0}^{s_1} x_s\Big\|\le C\frac{n+1}{m}.
 \end{equation} 
 
 \noindent
 Case 1. If $ 6 \le f(l)$, then we deduce that
 \begin{align*}
 \| I(x)\|_l &=\frac1{f(l)}\left(\sum_{t=1}^{l_1} \|E^{(1)}_t(x)\|+ \sum_{t=1}^{l_2} \|E^{(2)}_t(x)\|\right)\\
 &\le  C\frac{n+1}{m}+ \frac1{f(l)}  C\sum_{t=1}^{l_1} \Big[ \frac{f(m)}{m}\frac{\st_t-\st_{t-1}}{  f(\st_t-\st_{t-1})} \Big(1+\frac{\vp}3\Big)+ \frac{\st_t-\st_{t-1}}{m} \frac{\vp}3\Big] \\
 &\le C\frac{n+1}{m}+\frac1{f(l)}   C\Bigg[\frac{f(m)}{m}\frac{n}{f(n/l_1)}  \Big(1+\frac{\vp}3\Big)+ \frac{n}{m}\frac{\vp}3\Bigg] \\
 &\text{(By the concavity of the map $\xi\mapsto \xi/f(\xi)$)}\\
&= \frac{f(m)}{m} C\frac{n}{f(l) f(n/l_1)}  \Big(1+\frac{\vp}3\Big)+     C \frac{n}{m} \Big[1+   \frac{1}{n}  + \frac{\vp}{3f(l)}\Big] \\
&\le  \frac{f(m)}{m} C\frac{n}{f(n)}  \Big(1+\frac{\vp}3\Big) + C\frac{n}{m} \Big(1+\frac\vp3\Big)
 \end{align*}
where the last inequality  follows from the fact that $f(a/b) f(b)\ge f(a)$ for $a,b\ge 2$ (see \cite{Sch1}) and  \eqref{E:1.12.2}. This finishes the proof
of our induction step in this case.
 
\noindent
Case 2. If $f(l)<6$ we claim that $l_2=0$. Indeed, otherwise  $l=l_1+l_2\ge 3$ (we already observed that $l_1\ge 2$) and
 \begin{align*}
 \|I(x)\|_{l-1} &\ge \frac1{f(l-1)} \Bigg[\sum_{t=1}^{l_1} \|E^{(1)}_t\| +\sum_{t=2}^{l_2} \|E^{(2)}_t(x)\|\Bigg]\\
                      &\ge   \frac1{f(l)} \Bigg[\sum_{t=1}^{l_1} \|E^{(1)}_t\| +\sum_{t=2}^{l_2} \|E^{(2)}_t(x)\|\Bigg] + \Bigg(\frac1{f(l-1)} -\frac1{f(l)}\Bigg)\frac{f(m)}{m}\\
                      & >  \frac1{f(l)} \Bigg[\sum_{t=1}^{l_1} \|E^{(1)}_t\| +\sum_{t=2}^{l_2} \|E^{(2)}_t(x)\|\Bigg] + \frac1{f(l)} \|E^{(2)}_1(x)\| \text{ (by  \eqref{E:1.12.2})}\\
                      &=\|I(x)\|_l,
 \end{align*}
 which contradicts the assumption that $\|I(x)\|=\|I(x)\|_l$.
 So it follows  that   $l=l_1$ and from \eqref{E:1.12.10} and the concavity of the map $\xi\mapsto f(\xi)/\xi$, $\xi\ge 1$  it follows therefore that
 \begin{align*}
 \|I(x)\|_l&\le  \frac1{f(l_1)} \sum_{t=1}^{l_1} \Big[\frac{f(m)}{m} \frac{\st_t-\st_{t-1}}{  f(\st_t-\st_{t-1})} C\Big(1+\frac{\vp}3\Big)+ \frac{\st_t-\st_{t-1}}{m} C\frac{\vp}3\Big]\\
 &\le \frac{f(m)}{m} \frac{n}{f(l)f(n/l)} C\Big(1+\frac{\vp}3\Big)+\frac{n}{m} C\frac{\vp}3\le  \frac{f(m)}{m} \frac{n}{f(n)} C\Big(1+\frac{\vp}3\Big)+\frac{n}{m} C\frac{\vp}3,
 \end{align*} 
 which finishes the proof of the inductions step the the proof of our lemma.
 \end{proof} 
 
 \begin{lem}\label{L:1.13} Assume that $(\vp_i)\subset (0,(f(2)-1)/f(2))$ is summable, and put $C_i=  \prod_{j\ge i}(1+ \vp_j)$, for $i\in\N\cap\{0\}$.
 
 Assume that the sequence $(m_i: i\in \N\cup\{0\})\subset \N$  is an admissible sequence and  satisfies the following growth conditions. For all $i\in\N$ we assume that
 \begin{align} \label{E:1.13.1} &f(m_i)\ge C_i \max \left( \frac{50}{\vp_i^2} ,\frac{f(l_0)f(l_0-1)}{f(l_0)-f(l_0-1)}\right), \\
\intertext{where $l_0=\min\{l\in\N: f(l)\ge 6\}$, and }\notag\\
 &\label{E:1.13.2} \frac{f(m_i)}{f(m_i/m_{i-1})}\le 1+\frac{\vp_i}6.     
  \end{align}
  Then it follows for all $i\le j$ in $\N$ that 
  \begin{align} \label{E:1.13.3}
   &\|y(m_i,m_{i+1},\ldots m_j)\| \le C_{i}, \text{ and} \\
    \label{E:1.13.4} &\frac1{C_{i}} y(m_i,m_{i+1},\ldots m_j) \text{ is an $\ell^{m_{i-1}}_1$-average of constant $1/C_{i}$.}  
  \end{align}
 \end{lem}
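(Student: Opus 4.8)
The plan is to prove \eqref{E:1.13.3} and \eqref{E:1.13.4} simultaneously by induction on $j-i$, with Lemma \ref{L:1.12} as the inductive engine; the growth conditions \eqref{E:1.13.1}, \eqref{E:1.13.2} are cut out precisely so as to yield the hypotheses \eqref{E:1.12.2}, \eqref{E:1.12.3} of that lemma, while admissibility furnishes \eqref{E:1.12.1}. For the base case $i=j$ one argues directly. Since $y(m_i)=\tfrac{f(m_i)}{m_i}\sum_{l=1}^{m_i}e_l$ we have $\|y(m_i)\|_S=1\le C_i$ by \cite{Sch1}, which is \eqref{E:1.13.3}. Splitting $[1,m_i]$ into $m_{i-1}$ successive intervals $B_1<\dots<B_{m_{i-1}}$ of equal length $m_i/m_{i-1}$ (admissible by \eqref{E:1.12.1}) and setting $u_t=\tfrac{m_{i-1}f(m_i)}{m_i}\sum_{l\in B_t}e_l$, we get $y(m_i)=\tfrac1{m_{i-1}}\sum_t u_t$, and $1$-subsymmetry together with \cite{Sch1} gives $\|u_t\|_S=\tfrac{f(m_i)}{f(m_i/m_{i-1})}\le1+\vp_i/6\le C_i$ by \eqref{E:1.13.2}. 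Testing the implicit norm equation on the partition of $[1,m_i]$ into singletons yields $\|\sum_t a_t u_t\|_S\ge\sum_t|a_t|$, while the triangle inequality gives $\|\sum_t a_t u_t\|_S\le(1+\vp_i/6)\sum_t|a_t|$; hence $(u_t/C_i)$ is a block sequence in $B_S$, is $C_i$-equivalent to the $\ell_1^{m_{i-1}}$-basis, and $\|\tfrac1{C_i}y(m_i)\|_S=1/C_i$, i.e.\ $\tfrac1{C_i}y(m_i)$ is an $\ell_1^{m_{i-1}}$-average of constant $1/C_i$, which is \eqref{E:1.13.4}.

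For the inductive step fix $i<j$ and assume the lemma for all pairs of strictly smaller difference, in particular for $(i+1,j)$. Unwinding the recursive ``spread-and-fill'' definition, the largest coordinate value of $y(m_i,\dots,m_j)$ is $f(m_i)/m_i$ and it is attained at exactly $m_i$ positions $j_1<\dots<j_{m_i}$, with $j_1=1$; writing $x_s$ for the block of $y(m_i,\dots,m_j)$ strictly between $e_{j_s}$ and $e_{j_{s+1}}$ ($x_{m_i}$ being the tail block) we get $y(m_i,\dots,m_j)=\sum_{s=1}^{m_i}\big(\tfrac{f(m_i)}{m_i}e_{j_s}+x_s\big)$. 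The point is that each $x_s$ is, after order-preserving spreading (invisible to $\|\cdot\|_S$ by $1$-subsymmetry), a yardstick-type vector over the shorter index range $m_{i+1},\dots,m_j$ with roughly $m_{i+1}/m_i$ coordinates of the top value $f(m_{i+1})/m_{i+1}$, hence comparable with the $s$-th block of the $\ell_1^{m_i}$-average decomposition of $y(m_{i+1},\dots,m_j)$ provided by the inductive hypothesis \eqref{E:1.13.4} for $(i+1,j)$; as those blocks have norm at most $C_{i+1}/m_i$, this gives $\|x_s\|_S\le C_{i+1}/m_i$ for all $s$ (the boundary deviation in the count of top coordinates being absorbed by the slack in \eqref{E:1.13.1}, \eqref{E:1.13.2} — see below). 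Granting this, I would apply Lemma \ref{L:1.12} with $m=m_i$, $k=m_{i-1}$, $C=C_{i+1}$ and $\vp=\vp_i$: \eqref{E:1.12.1} is admissibility, \eqref{E:1.12.2} follows from \eqref{E:1.13.1} since $C_i=(1+\vp_i)C_{i+1}\ge C_{i+1}$, \eqref{E:1.12.3} is exactly \eqref{E:1.13.2}, and \eqref{E:1.12.4}--\eqref{E:1.12.5} are the decomposition just exhibited. Then \eqref{E:1.12.6} reads $\|y(m_i,\dots,m_j)\|_S\le C_{i+1}(1+\vp_i)=C_i$, which is \eqref{E:1.13.3}, and \eqref{E:1.12.7} with the last assertion of Lemma \ref{L:1.12} presents $\tfrac1{C_i}y(m_i,\dots,m_j)=\tfrac1{m_{i-1}}\sum_{t=1}^{m_{i-1}}y_t$ as an $\ell_1^{m_{i-1}}$-average with $y_t\in B_S$ and $(y_t)$ being $C_i$-equivalent to the $\ell_1^{m_{i-1}}$-basis; the lower $\ell_1$-estimate also forces $\|y(m_i,\dots,m_j)\|_S\ge1$, so $\|\tfrac1{C_i}y(m_i,\dots,m_j)\|_S\ge1/C_i$, completing \eqref{E:1.13.4}.

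The one genuinely delicate point — and where I expect the main work to lie — is the estimate $\|x_s\|_S\le C_{i+1}/m_i$, especially for the tail block $x_{m_i}$. Unwinding the definition, $x_s$ carries about $m_{i+1}/m_i$ coordinates of the value $f(m_{i+1})/m_{i+1}$, with an $O(1)$ deviation from that count for $s<m_i$ and an $O(m_i)$ deviation for the tail block $s=m_i$, whereas the corresponding block of $y(m_{i+1},\dots,m_j)$ carries exactly $m_{i+1}/m_i$; so $x_s$ is only comparable to, not equal to, a block of $y(m_{i+1},\dots,m_j)$, and this boundary discrepancy must be shown to cost nothing on the scale $1/m_i$. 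This is exactly where the strength of \eqref{E:1.13.1}, \eqref{E:1.13.2} enters: they force $f(m_{i+1}/m_i)$, hence $m_{i+1}/m_i$, to be enormous relative to $m_i$, so that a relative change of $O(m_i^2/m_{i+1})$ in the number of top coordinates is negligible and the gap between $f(m_{i+1}/m_i)$ and $f(m_{i+1}/m_i-1)$ is swallowed by the factor $1+\vp_{i+1}$ inside $C_{i+1}$. Once that is in hand, the rest is bookkeeping of the kind already carried out in the proof of Lemma \ref{L:1.12}.
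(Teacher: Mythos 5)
Your proposal follows the paper's proof essentially step for step: the same downward induction with Lemma \ref{L:1.12} as the engine, the same parameter matching $m=m_i$, $k=m_{i-1}$, $C=C_{i+1}$, $\vp=\vp_i$ (so that the conclusion $C(1+\vp)=C_i$ comes out right), and the same decomposition of $y(m_i,\dots,m_j)$ into the $m_i$ top coordinates $\tfrac{f(m_i)}{m_i}e_{j_s}$ plus interlaced blocks $x_s$. The one place you diverge is the step you yourself flag, namely the bound $\|x_s\|_S\le C_{i+1}/m_i$, which you propose to extract by an asymptotic absorption argument exploiting the hugeness of $m_{i+1}/m_i$. The paper gets this bound by pure bookkeeping instead: it carries a strengthened induction hypothesis (the ``more precisely'' clause following \eqref{E:1.13.6}) asserting that the $\ell_1^{m_i}$-average structure of $y(m_{i+1},\dots,m_j)$ is witnessed by the canonical decomposition into $m_i$ \emph{equally distributed} successive blocks of norm at most $C_{i+1}/m_i$, coming from \eqref{E:1.12.7} at the previous stage, and then notes that the $x_s$ are equally distributed with $\sum_s x_s\eqdist y(m_{i+1},\dots,m_j)$, so each $x_s$ inherits the bound by $1$-subsymmetry. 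If one reads the yardstick construction as the displayed example of $y(m_1,m_2)$ indicates (each coordinate of the previous stage followed by exactly $m_k/(m_1+\cdots+m_{k-1})$ new coordinates), the coordinate counts you worry about agree exactly: there is no $O(1)$ or $O(m_i)$ boundary deviation, and no appeal to \eqref{E:1.13.1}--\eqref{E:1.13.2} is needed at this point. So your plan is sound, but you should (i) strengthen the induction hypothesis as the paper does, since statement \eqref{E:1.13.4} alone does not tell you \emph{which} blocks witness the average, and your comparison uses specifically the equally distributed ones; and (ii) note that the residual subtlety is not the count of top-value coordinates but the left-to-right order of values inside $x_s$ versus inside the corresponding block of $y(m_{i+1},\dots,m_j)$ (for three or more parameters, $x_s$ opens with a run of the smallest values $f(m_j)/m_j$, while the block opens with $f(m_{i+1})/m_{i+1}$); since $\|\cdot\|_S$ is subsymmetric but not symmetric this deserves a word, though it is disposed of by suppression unconditionality after splitting off that short leading run, whose norm is of order $1/(m_i+\cdots+m_{j-1})$ and hence negligible against $C_{i+1}/m_i$.
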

 \begin{remark} For the sequence $(m_i)$ as chosen in Lemma  \ref{L:1.13} we deduce therefore that, if $k\kin\N$ and $\vp\!>\!0$   and
 if $i_0\kin\N$ is chosen so that $k\kleq m_{i_0}$ and $\prod_{i=i_0}^\infty (1+\vp_i)\kleq 1+\vp$, then for all 
 sequences $i_0<i_1<i_1<\cdots< i_l$, $l\in\N$, it follows that
 \begin{align}\label{E:1.14}
 &\|y(m_{i_1},m_{i_2}, \ldots,m_{i_l})\|_S\le 1+\vp\text{ and }\\
 \label{E:1.15}
&\frac1{1+\vp} y(m_{i_1},m_{i_2}, \ldots, m_{i_l})\text{ is an $\ell^{m_{i_0}}_1$-average of constant $\frac1{1+\vp}$.}
 \end{align}
 \end{remark}

 \begin{proof}[Proof of Lemma \ref{L:1.13}] Let  the sequence $(m_j:j\kin\N\cup\{0\})$ be chosen as required. Let $j\kin\N$. By induction on $i=0,1,2\ldots, j-1$ we will show that
  \begin{align} 
    \label{E:1.13.5}&\|y(m_{j-i},m_{j-i+1},\ldots, m_j)\| \le C_{j-i}, \text{ and} \\
   \label{E:1.13.6}  &\frac1{C_{j-i}} y(m_{j-i},m_{j-i+1},\ldots m_j) \text{ is an $\ell^{m_{j-i-1}}$-average of constant $1/C_{j-i}$. }\\
   &\text{ More precisely, we can write $y=y(m_{j-i},m_2,\ldots m_j) $ as} \notag \\
     &y=\frac{C_{j-i}}{m_{j-i-1}} \sum_{s=1}^{m_{j-i-1}} y_s \text{ where $y_1<y_2 <\ldots y_{m_{j-i-1}} $ are in $B_S$,}\notag\\
     &\text{equally distributed and $C_{j-i}$-equivalent to the basis of $\ell_1^{m_{j-i-1}}$.}\notag
  \end{align}
   For $i=0$ it follows that $y(m_j)=\frac{f(m_j)}{m_j} \sum_{s=1}^{m_j} e_s \in S_S$, and the conditions of Lemma \ref{L:1.12} are satisfied with $m=m_j$, $\vp=\vp_j$, and 
   $C=1\le C_{j+1}$, $k=m_{j-1}$, and $x_s=0$, for $s=0,1, \ldots m_j$. Since $C_j=(1+\vp_j)C_{j+1}$, this implies our claim for $i=0$.
   
   Assuming  \eqref{E:1.13.5}  and \eqref{E:1.13.6} are true for $i-1$ with $1\le i < j-1$. Using the recursive definition  of  $y(m_{j-i}, m_{j-i+1},\ldots m_j)$ one  can write it  as
   $$y(m_{j-i}, m_{j-i+1},\ldots, m_j)= \sum_{s=1}^{m_{j-i}} \frac{f(m_{j-i})}{m_{j-i}} e_{j_s} + x_s,$$
   so that  the $x_s$, $s\le m_{j-i}$, are equally distributed vectors, and 
   $\sum_{s=1} ^{m_{j-i}}  x_s$ has the same distribution as $y( m_{j-i+1},\ldots m_j)$. It follows  therefore from the induction hypothesis \eqref{E:1.13.6} (for $i-1$) that 
  $\|x_s\|\le C_{j-i+1}/m_{j-i}$, for $s=1,2\ldots m_{j-1}$. Thus Lemma \ref{L:1.12} is satisfied  with $m=m_{j-i}$, $k=m_{j-i-1}$, $\vp=\vp_{j-i}$ and $C=C_{j-i+1}$,
  and we deduce that  $\|y(m_{j-i}, m_{j-i+1},\ldots, m_j)\le (1+\vp_{j-i}) C_{j-i+1}=C_{j-i}$, which implies \eqref{E:1.13.5}.
  Moreover, the second part of the conclusion of Lemma \ref{L:1.12} yields that
  if we  write  $y(m_{j-i}, m_{j-i+1},\ldots, m_j)$ as sum of  a block of $m_{j-i-1}$ equally distributed vectors $\yt_1<\yt_2< \ldots \yt_{m_{j-i-1}}$, we deduce that
  $\|\yt_t\|\le (1+\vp_{j-i})C_{j-i+1}/m_{j-i-1}=C_{j-i}/m_{j-i-1}$, $t=1,2,\ldots ,m_{j-i-1}$. Since the unit vector basis in $S$ is $1$-unconditional this implies
  that $(y_t: 1\le t\le {m_{j-i-1}})$, with  $y_t=m_{j-i-1}\yt_t/C_{j-i}$, for $t=1,2,\ldots,m_{j-i-1}$, is $C_{j-i}$-equivalent to the $\ell_1^{m_{j-i-1}}$ basis.
  Thus $y(m_{j-i}, m_{j-i+1},\ldots, m_j)/C_{j-i}$, is an $\ell_1^{m_{j-i-1}}$-average up to the constant $1/C_{j-i}$, in the way it is described by 
 \eqref{E:1.13.6}. \end{proof}

  \section{Construction of a version of Gowers Maurey space}\label{S:2}

  To define the space ${\mathcal G \mathcal M}$, which will be a version of the space $GM$ introduced in \cite{GM}, we  need to choose  several  objects.

First, assume that $\vpb=(\vp_n)_{n\ge 0} \subset (0,1)$ satisfies the following {\em standard conditions}
 \begin{equation}\label{E:2.1}  \vp_0< \frac{f(2)-1}{2},  \vp_n\le 2^{-n} \text{ and  }\sum_{i>n}  i^2\vp_{i}\le \frac1{10}  \vp_n,\text{ for $n\kin\N$.}
 \end{equation}

 Secondly, 
 let ${\bf Q}$ be a countable  set   of elements  of $\coo$, so that
 \begin{align}\label{E:2.2}
& \Big\{\sum_{i=1}^l a_ie_i: l\in\N, a_i\in\Q, \text{ for $i=1,2,\ldots,l$}\Big\}\cap[-1,1] ^\N\subset {\bf Q} \subset \coo\cap [-1,1]^\N,\\
\label{E:2.3}
&\text{if $x\in {\bf Q}$ and $E\subset \N$ is finite, then $E(x) \in{\bf Q}$},\\ 
 \label{E:2.4} 
 &\text{if $(x_i)_{i=1}^l\kin {\bf Q}^l$ is a finite block sequence, then $\frac1{f(l)}\sum_{i=1}^l x_i$ and 
    $\frac1{\sqrt{f(l)}}\sum_{i=1}^l x_i$}\\
    &\text{are in $\bf Q$.}\notag
 \end{align}

Next we introduce a  lacunary   set  $J\subset \N$.  We write $J$ as an increasing sequence $\{j_1,j_2,\ldots \}$, and require the following four growth conditions 
\begin{align}
 \label{E:2.5}&\sum_{i>n}\frac{2}{j_i}<\frac{1}{f(j_n)}, \text{ for all $n\in\N$,}\\
 \label{E:2.6}
&\text{$(j_i)_{i=1}^\infty$ is admissible, and satisfies the conditions  \eqref{E:1.13.1} and  \eqref{E:1.13.2} imposed}\\
&\text{on $(m_i)_{j=1}^\infty$ in Lemma \ref{L:1.13} (relative to the sequence $(\vp_n)$ as chosen above).}\notag
\end{align}

In order to formulate the last condition on $J$, we first need to state an observation  which is an easy consequence  of  James' blocking argument.
 \begin{lem} \label{L:2.8}
For all $n\in\N$ and all $\vp>0$ there is an $N=N(n,\vp)$ so that the following holds:

Assume that $(E,\|\cdot\|_E)$ is a Banach space with a normalized and subsymmetric basis $(e_i)$, and  there is a $c\kin(0,1]$  so that for all  $k\kin\N$
$$\Big\|\sum_{j=1}^k e_i\Big\|_E\ge c\frac{k}{f(k)}.$$
Then, for all $\vp>0$ and $n\in\N$,    there is an $m\in[n,N(n,\vp)]$ which is divisible by $n$,  and there are 
$n$ subsets $A_1<A_2<\ldots A_n$ of $\{1,2,\ldots, m\}$, all of cardinality $m/n$,  
so that $(x_i:i=1,2,\ldots, n)$ is $c^{1/n}(1-\vp)$-equivalent to the $\ell_1^n$-unit vector basis, where
$$x_i=\frac{\sum_{j\in A_i} e_i}{\Big\|\sum_{j\in A_i} e_j\Big\|}\text{\ \  for $i=1,2\ldots, n$.}$$
\end{lem}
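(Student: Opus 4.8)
The statement is the classical James blocking argument, so the plan is to reduce everything to the behaviour of the single function $\phi(k)=\|\sum_{j=1}^k e_j\|_E$ and then extract a good scale. First I would record the elementary properties of $\phi$: $\phi(1)=1$; $\phi$ is nondecreasing (by $1$-unconditionality); $\phi$ is subadditive, $\phi(k+l)\le \phi(k)+\phi(l)$ (triangle inequality together with $1$-spreading, which makes $\|\sum_{j=k+1}^{k+l}e_j\|=\phi(l)$), so in particular $\phi(nk)\le n\phi(k)$; and, crucially, that $\psi(k):=\phi(k)/k$ is non-increasing. This last point is the averaging step: for fixed $k$ the $k+1$ vectors $v_l=\sum_{j\in\{1,\dots,k+1\}\setminus\{l\}}e_j$ each have norm $\phi(k)$ by $1$-spreading, while $\sum_{l=1}^{k+1}v_l=k\sum_{j=1}^{k+1}e_j$, whence $k\phi(k+1)\le(k+1)\phi(k)$. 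The hypothesis says exactly $\psi(k)\ge c/f(k)$, which tends to $0$ but only logarithmically slowly because $f(x)=\log_2(x+1)$.

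Next I would find a ``flat scale''. Put $\beta=c^{1/n}(1-\vp)$ (shrinking $\vp$ slightly to leave room) and examine the scales $n^0,n^1,\dots,n^J$. Telescoping gives $\prod_{i=1}^{J}\psi(n^i)/\psi(n^{i-1})=\psi(n^J)\ge c/f(n^J)\ge c/(1+J\log_2 n)$, and since every factor is $\le 1$ (because $\psi$ is non-increasing) the geometric mean of the $J$ factors is at least $c^{1/J}(1+J\log_2 n)^{-1/J}$. Taking $J\ge n$ makes $c^{1/J}\ge c^{1/n}$, and then for $J$ large enough in terms of $n,\vp$ one has $(1+J\log_2 n)^{1/J}\le(1-\vp)^{-1}$ (polynomial beaten by exponential), so the geometric mean exceeds $\beta$; hence some single factor is $\ge\beta$. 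Choose such an $i$, set $p=n^{i-1}$ and $m=n^i=np\le n^J=:N(n,\vp)$, so $m$ is divisible by $n$, $m/n=p$, and $\psi(np)\ge\beta\,\psi(p)$. Because $\psi$ is non-increasing, this upgrades to $\psi(l)\ge\psi(np)\ge\beta\,\psi(p)$, i.e. $\phi(l)\ge\beta(l/p)\phi(p)$, for \emph{every} $l\in[p,np]$: over this whole range the basis is, up to the factor $\beta$, as good as $\ell_1$.

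Finally I would take $A_i=\{(i-1)p+1,\dots,ip\}$ and $x_i=\phi(p)^{-1}\sum_{j\in A_i}e_j$ (normalized) and check $\ell_1^n$-equivalence with constant $c^{1/n}(1-\vp)$. The upper estimate $\|\sum a_ix_i\|\le\sum|a_i|$ is immediate. For the lower estimate one reduces to $a_i\ge0$ by $1$-unconditionality; by $1$-spreading a $0$-$1$ vector has norm depending only on the number of ones, so $\|\sum_{i\in B}\sum_{j\in A_i}e_j\|=\phi(|B|p)$, and more generally, for any threshold $t>0$, coordinatewise $\sum_i a_i\sum_{j\in A_i}e_j\ \ge\ t\,\mathbf 1_{\bigcup\{A_i:\,a_i>t\}}$, so by $1$-unconditionality $\|\sum_i a_i\sum_{j\in A_i}e_j\|\ge t\,\phi\big(p\cdot|\{i:a_i>t\}|\big)\ge\beta\, t\,|\{i:a_i>t\}|\,\phi(p)$. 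Running this over a geometric grid of thresholds (and discarding the negligibly small coefficients) recovers $\|\sum_i a_i\sum_{j\in A_i}e_j\|\ge\beta(1-\vp)\phi(p)\sum_i a_i$, hence the claim after re-absorbing $\vp$. I expect the main obstacle to be precisely this last point — passing from the all-ones estimate to arbitrary nonnegative coefficients while keeping the constant close to $1$ rather than letting it degrade by a factor like $f(n)$. This is exactly why $p$ must be taken enormous compared with $n$: it forces $f$ to be essentially constant on all of $[p,np]$, so the logarithmic loss one incurs when the coefficients are spread over many levels is absorbed into $(1-\vp)$. Everything else (divisibility, the bound $m\le N(n,\vp)$, the equivalence of the two normalizations of $\ell_1^n$) is bookkeeping.
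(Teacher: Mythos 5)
The paper gives no proof of Lemma \ref{L:2.8} (it is asserted as an easy consequence of James's blocking argument), so your write-up has to stand on its own. The first two thirds do: the properties of $\phi(k)=\|\sum_{j\le k}e_j\|_E$, the monotonicity of $\psi(k)=\phi(k)/k$ via the averaging identity $k\phi(k+1)\le (k+1)\phi(k)$, and the selection of a scale $p=n^{i-1}$, $m=n^{i}\le n^{J}=:N(n,\vp)$ with $\psi(np)\ge \beta\,\psi(p)$, $\beta=c^{1/n}(1-\vp)$, are all correct, give an $N$ depending only on $n,\vp$, and account precisely for the constant $c^{1/n}$ in the statement. This yields $\phi(kp)\ge\beta k\phi(p)$ for $k\le n$, hence the lower $\ell_1$-estimate for $0$--$1$ (equivalently, by $1$-unconditionality, $\pm1$) coefficient vectors.

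The gap is the final step, and it is not just bookkeeping. Each threshold $t$ gives the single lower bound $\|\sum_i a_iu_i\|\ge \beta t\,|\{i:a_i>t\}|\,\phi(p)$, but these are all lower bounds for the \emph{same} quantity, so they cannot be summed over a geometric grid; the best single threshold satisfies $\sup_t t\,|\{i:a_i>t\}|\approx (\sum_i a_i)/\log n$ for $a_i=1/i$, so this route loses a factor of order $\log n$, not $(1-\vp)$. Nor does taking $p$ huge help: the obstruction is not the variation of $f$ on $[p,np]$ but the fact that the norm of $E$ is not determined by its fundamental function $\phi$, and the threshold bounds are essentially all that $\phi$ tells you about $\|\sum_i a_iu_i\|$ from below. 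The standard repair is dual: let $\xi\ge 0$ be a norming functional for $u_1+\cdots+u_n$; then $\sum_i\xi(u_i)=\phi(np)\ge(1-\eta)n\phi(p)$ together with $\xi(u_i)\le\phi(p)$ forces $\xi(u_i)\ge(1-n\eta)\phi(p)$ for \emph{every} $i$, whence $\|\sum_i a_iu_i\|\ge\sum_i|a_i|\xi(u_i)\ge(1-n\eta)\phi(p)\sum_i|a_i|$. This requires running your scale selection with flatness parameter $\eta=\vp/n$ rather than $\vp$ (harmless: it only enlarges $J$), and it settles the case $c=1$, which is the one the paper actually uses (asymptotically isometric RIS). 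For $c$ bounded away from $1$ the factor $1-n(1-\beta)$ is useless, and neither your threshold bound nor the trivial estimate $\|\sum a_iu_i\|\ge\max_i|a_i|\,\phi(p)\ge\tfrac1n\phi(p)\sum|a_i|$ reaches $c^{1/n}(1-\vp)$ in the intermediate range $n^{-n}\ll c<1$; so as written your proof (and, frankly, the lemma's stated constant) needs an additional idea there. You should at minimum supply the norming-functional step and flag the dependence $\eta=\vp/n$.
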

Our fourth condition  on $J=\{j_1,j_2,\ldots \}$ can now be stated as follows (the first  inequality being trivial):
\begin{align}
\label{E:2.9}
&j_{s}\le N( j_{s},\vp_{s})\le \frac12 \vp_{s+1} f(j_{s+1}).\qquad\qquad
\end{align}
 Finally  we will need an injective function  $\sigma$ from the collection of all
finite sequences of  elements of ${\bf Q}$ to the set $\{j_2,j_4,\ldots\}$ such that
  if $l\in\N$, $z^*_1,z^*_2,\ldots z^*_l\in {\bf Q} $   and $N= \max \big( \cup_{s=1}^i  \supp(z^*_s) \big)$, then
\begin{align}
\label{E:2.10} &\vp_N f(\sigma(z^*_1, \ldots , z^*_i)) \geq N .
\end{align}

 Depending on our choice of $\vp$, $\bf{ Q}$, $J$ and $\sigma$ we  can now define recursively subsets ${\mathcal G \mathcal M}^*_m$ in $c_{00}\cap[-1,1]^{\N}$, for each  $m\in\N_0$, which will
 serve as a set of normalizing functionals of ${\mathcal G \mathcal M}$.
 
 Let 
$$
{\mathcal G \mathcal M}^*_0= \{ \lambda e_n^* : n \in \N, \ | \lambda | \leq 1 \} . 
$$

Assume that ${\mathcal G \mathcal M}_m^*$ has been defined for some $m\in\N_0$. Then ${\mathcal G \mathcal M}_{m+1}^*$ is the set of
all functionals of the form $E(z^*)$ where $E \subseteq \N$ is an interval
and $z^*$ has one of the following three forms   \eqref{E:2.11} , \eqref{E:2.12} or \eqref{E:2.13}:
\begin{equation}\label{E:2.11}
z^*= \sum_{i=1}^l \alpha_i z_i^* 
\end{equation}
where $\sum_{i=1}^l | \alpha_i | \leq 1$ and $z_i^* \in
{\mathcal G \mathcal M}_m^*$  for
$i=1, \ldots , k$.
\begin{equation}\label{E:2.12}
z^*= \frac{1}{f(l)} \sum_{i=1}^l z_i^*, 
\end{equation}
where $z_i^* \in {\mathcal G \mathcal M}_m^*$ for $i=1, \ldots , l$, and
$z_1^*  < \cdots <
z^*_l$. 
\begin{equation}\label{E:2.13}
z^*= \frac{1}{\sqrt{f(k)}} \sum_{i=1}^k z^*_i \mbox{ and }
z^*_i= \frac{1}{f(n_i)} \sum_{j=1}^{n_i} z^*_{i,j}
\end{equation}
where 
\begin{enumerate}
\item[a)]
$z^*_{1,1} < \cdots < z^*_{1,n_1}< z^*_{2,1} < \cdots <
z^*_{l , n_l}$,
\item[b)]   $z^*_{i,j} \in {\mathcal G \mathcal M}^*_m\cap {\bf Q}$, for $1 \leq i \leq k$ and $1 \leq j
\leq n_i$  (and thus $z^*_i\in{\bf Q}$, for $i=1,2\ldots k$), and 
\item[c)]
 $n_1= j_{ 2k'}$,  for some  $k'\ge k$, and $n_{i+1}= \sigma (z_1^*,
\ldots , z_i^*)$, for $i=1, \ldots , k-1$. 
\end{enumerate}
Finally, the norm of ${\mathcal G \mathcal M}$ is defined by
$$
\| x \|_{{\mathcal G \mathcal M}} = \sup \{ z^* (x) : z^* \in \cup_{m=0}^\infty {\mathcal G \mathcal M}^*_m \}.
$$
\begin{remark} There are two  main technical differences between the original space $GM$ defined in \cite{GM} and the space ${\mathcal G \mathcal M}$  defined here:
 \begin{enumerate}
 \item
   we  allow in \eqref{E:2.13} $k$ to take any value in $\N$, while in \cite{GM} $k$ had to be chosen out of the very lacunary set 
  $\{j_{2s+1}$, $ s\in\N\}$ and $\sigma$ in \cite{GM} could only take values in $\{j_{2s}: s\in \N\}$. 
  \item  in \eqref{E:2.13} we allow that $n_1$  is of the form $n_1=j_{2k'}$, with $k'\ge k$, while in \cite{GM}, it is required that $k'=k$.
  \end{enumerate}
The point is that it is not enough to use the coding procedure of \cite{GM} to obtain as they do, given $\epsilon>0$, some $k$ and two intertwined  finite sequences $u_1<v_1<\cdots<u_k<v_k$ such that
$\|\sum_{i=1}^k (u_i-v_i)\| \leq \epsilon \|\sum_{i=1}^k (u_i+v_i)\|$. To deduce  estimates about spreading models, we need this to be valid for any $k$ large enough and for any initial vector $u_1$ far enough along the basis.

  The proof that our construction still
  does not contain an unconditional basis becomes therefore a bit harder. Nevertheless  the main ideas  of the proof stay the same.
 \end{remark}
 
 \noindent{\bf Notation.} 
For $m\in\N$, and if $X$ is a Banach space with a normalized basis $(e_i)$ (we will use this notation for 
 $S$ as well as for ${\mathcal G \mathcal M}$).
$$A_m^*(X)=\Big\{ \frac1{f(l)} \sum_{i=1}^l x_i^*: x^*_1< x^*_2<\ldots x^*_l \text{ in }B_{X^*}
\cap  \coo\Big\}.$$
Note that $A^*_m({\mathcal G \mathcal M})\subset B_{{\mathcal G \mathcal M}^*}$ and $A^*_m(S)\subset B_{S^*}$. 

We define
for $x\in X$  and $m\kin\N$
$$ \|x\|_m=\sup_{x^*\in A^*_m}  |x^*(x)| =\max_{ E_1<E_2<\ldots E_m} \frac1{f(m)}\sum_{i=1}^m \|E_i(x)\|. $$
and observe that
$$\frac1{f(m)} \|x\|_S\le \|x\|_m \le \|x\|_S\le \|x\|_{{\mathcal G \mathcal M}}.$$
For $k\in\N$ we also define

$$\Gamma_k^*=\left\{ \frac1{\sqrt{f(k)}} \sum_{i=1}^k x_i^*: 
\begin{matrix}
x^*_1< x^*_2<\ldots x^*_k \text{ in }B_{{\mathcal G \mathcal M}^*} \cap{\bf Q} \\
x^*_i  \in A^*_{m_i}({\mathcal G \mathcal M}) \text{ with } m_1,m_2,\ldots m_k \in  M\\
m_1=j_{2k'},  k'\ge k,\text{ and } m_{i+1}=\sigma(x^*_1,\ldots x^*_i)\text{ if }i<k 
\end{matrix}
 \right\},$$
 and put for $x\in {\mathcal G \mathcal M}$
 $$ \|x\|_{G^*_m}=\sup_{x^*\in \Gamma^*_m}  |x^*(x)| . $$

 \section{Some technical observations concerning the space ${\mathcal G \mathcal M}$}\label{S:4}
 In this section we prove several properties of the space ${\mathcal G \mathcal M}$, as defined in the previous section. In particular we will conclude 
 that also this version does not contain any unconditional basic sequences. In this section we will abbreviate 
  $\|\cdot\|_{{\mathcal G \mathcal M}}$ by $||\cdot\|$.
 
  The following observation follows from James' blocking argument (See Lemma \ref{L:2.8}).
 \begin{lem}\label{L:4.1} The space  $\ell_1$ is finitely block represented in every infinite dimensional block subspace of ${\mathcal G \mathcal M}$.
 \end{lem}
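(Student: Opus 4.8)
\textbf{Proof proposal for Lemma \ref{L:4.1}.}

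The plan is to apply James' blocking argument in the form of Lemma \ref{L:2.8}, so the only real task is to verify its hypothesis inside an arbitrary block subspace $Y=[y_n:n\in\N]$ of $\mathcal{G}\mathcal{M}$, namely that there is a constant $c\in(0,1]$ with $\bigl\|\sum_{j=1}^k y_j'\bigr\|\ge c\,k/f(k)$ for every normalized block sequence $(y_j')$ of $(y_n)$ and every $k$. The natural way to get such a lower estimate is to test against a functional of type \eqref{E:2.12}: given normalized blocks $y_1'<\cdots<y_k'$ in $\mathcal{G}\mathcal{M}$, pick for each $j$ a norming functional $w_j^*\in\bigcup_m\mathcal{G}\mathcal{M}_m^*$ with $w_j^*(y_j')$ close to $1$, and replace $w_j^*$ by its restriction $E_j(w_j^*)$ to the range of $y_j'$, which stays in $\mathcal{G}\mathcal{M}^*$ by \eqref{E:2.3} (or by the fact that $\mathcal{G}\mathcal{M}^*_{m+1}$ is closed under the action of intervals $E$). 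These restricted functionals are then successive, $w_1^*<\cdots<w_k^*$, so $\frac1{f(k)}\sum_{j=1}^k w_j^*$ is a legitimate element of $\mathcal{G}\mathcal{M}^*$ via \eqref{E:2.12}; evaluating it on $\sum_{j=1}^k y_j'$ gives $\bigl\|\sum_{j=1}^k y_j'\bigr\|\ge \frac1{f(k)}\sum_{j=1}^k w_j^*(y_j')\ge \frac{c\,k}{f(k)}$ with, say, $c=1/2$ after choosing the $w_j^*$ accurately enough. A small point to check here is that the restriction to an interval does not destroy membership in $\mathcal{G}\mathcal{M}^*$, which is immediate since each $\mathcal{G}\mathcal{M}^*_{m+1}$ is defined as the set of $E(z^*)$ with $E$ an interval; one also notes $w_j^*(y_j')=E_j(w_j^*)(y_j')$ because $E_j=\ran(y_j')$.

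With the hypothesis of Lemma \ref{L:2.8} in hand, I would apply it to the block subspace $Y$: for each $n\in\N$ and each $\varepsilon>0$ the lemma produces, after a further blocking of $(y_j')$, a normalized block sequence $(x_i)_{i=1}^n$ in $Y$ that is $c^{1/n}(1-\varepsilon)$-equivalent to the $\ell_1^n$-unit vector basis. Since $c=1/2$ is fixed, $c^{1/n}\to 1$, so for any prescribed $\delta>0$ one can first choose $n$ large (so $c^{1/n}>1-\delta/2$) — wait, more simply: for a fixed target $n$ and any $\varepsilon$, the constant $c^{1/n}(1-\varepsilon)$ is a fixed positive number, which already witnesses that $\ell_1^n$ is block represented in $Y$ with a constant independent of where along $(y_n)$ we work; letting $n$ range over $\N$ gives finite block representability of $\ell_1$ in $Y$. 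Because $Y$ was an arbitrary infinite-dimensional block subspace of $\mathcal{G}\mathcal{M}$, this proves the lemma.

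The main (and only mild) obstacle is purely bookkeeping: making sure that the ``$w_j^*$ restricted to $\ran(y_j')$'' are genuinely successive and genuinely in $\mathcal{G}\mathcal{M}^*$, and that the resulting averaged functional has the exact syntactic shape \eqref{E:2.12} required — all of which follow directly from the inductive definition of the sets $\mathcal{G}\mathcal{M}^*_m$ and the closure properties \eqref{E:2.3}. There is no delicate estimate involved; this is why the statement is presented as a lemma ``following from James' blocking argument'' rather than proved in detail.
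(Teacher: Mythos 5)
Your reconstruction of the lower estimate is exactly the content behind the paper's one-line citation of James' blocking argument: testing $\sum_{j=1}^k y_j'$ against $\frac1{f(k)}\sum_j E_j(w_j^*)$ with $E_j=\ran(y_j')$ is legitimate (the $\mathcal{G}\mathcal{M}^*_m$ are increasing and closed under interval restrictions, so the averaged functional has the shape \eqref{E:2.12}), and it shows that \emph{every} normalized block sequence of length $k$ in $\mathcal{G}\mathcal{M}$ $\frac{f(k)}{1-\epsilon}$-dominates the $\ell_1^k$ basis. One technical caveat: Lemma \ref{L:2.8} is stated for a space with a normalized \emph{subsymmetric} basis, which a block subspace $Y$ of $\mathcal{G}\mathcal{M}$ is not, so you cannot invoke it verbatim; what you should say is that the James blocking argument underlying Lemma \ref{L:2.8} only needs the lower estimate along the block vectors actually formed, and your estimate supplies this for all normalized block sequences of $Y$ simultaneously.

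The genuine soft spot is your ``more simply'' shortcut at the end. With $c=1/2$ fixed, the conclusion of Lemma \ref{L:2.8} for a fixed target $n$ gives $\ell_1^n$'s only with the constant $c^{1/n}(1-\vp)$, which for small $n$ is bounded away from $1$ (e.g.\ $\approx 0.7$ for $n=2$). That is crude uniform block representability, not finite block representability in the standard $(1+\epsilon)$ sense — and the isometric version is what the paper actually uses, since the remark after Definition \ref{D:4.3} extracts from Lemma \ref{L:4.1} \emph{asymptotically isometric} RIS, i.e.\ $\ell_1^{k_n}$-averages of constant $1/(1+\vp_n)\to 1$. The repair is already in your own draft and is immediate: either (i) do not settle for $c=1/2$ — since $w_j^*(y_j')$ can be made $\ge 1-\epsilon$ for arbitrary $\epsilon$, you may take $c=1-\epsilon$, and then $c^{1/n}(1-\vp)\ge(1-\epsilon)(1-\vp)$ is as close to $1$ as desired for every $n$; or (ii) keep $c=1/2$ but, for given $n$ and $\delta$, run the blocking with $n'\ge n$ so large that $c^{1/n'}>1-\delta$ and retain only the first $n$ of the resulting vectors (a $C$-equivalence to the $\ell_1^{n'}$ basis restricts to a $C$-equivalence to the $\ell_1^n$ basis). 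With either fix the argument is complete and is the one the paper intends.
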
 
 The next Lemma is easy to show (c.f. \cite{Sch1} or \cite{GM})
 \begin{lem}[Action of $\|\cdot\|_l$ on $\ell_1^+$ averages]\label{L:4.2} 
 Assume that $x\in B_{{\mathcal G \mathcal M}}$ is an $\ell_1^{n}$-average  and $l\kin \N$.
 Then 
 \begin{equation}\label{E:4.2.1}
 \|x\|_l\le \frac1{f(l)}\Big(1+\frac{l}{n}\Big).
 \end{equation} 
\end{lem}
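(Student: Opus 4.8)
The plan is to exploit the defining feature of an $\ell_1^n$-average, namely that $x=\frac1n\sum_{i=1}^n x_i$ with $(x_i)_{i=1}^n$ a normalized-in-norm block sequence in $B_{{\mathcal G \mathcal M}}$ (for the purposes of the upper estimate only the block structure and the bound $\|x_i\|\le 1$ matter). Fix $l\in\N$ and take successive intervals $E_1<E_2<\cdots<E_l$ realizing $\|x\|_l$, so that $\|x\|_l=\frac1{f(l)}\sum_{j=1}^l\|E_j(x)\|$. The first step is the standard bookkeeping: for each block $x_i$ there is at most one index $j$ for which $E_j$ \emph{splits} the range of $x_i$ (i.e. $\supp(x_i)$ meets $E_j$ but is not contained in it on either side); call such a pair $(i,j)$ a \emph{boundary} pair. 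Since the $E_j$ are successive intervals and the $x_i$ are successive blocks, the number of boundary pairs is at most $l-1$ (one possible split at each of the $l-1$ interfaces between consecutive $E_j$'s), and also at most $n$.

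The second step splits each $\|E_j(x)\|$ according to whether the contributing $x_i$ lies entirely inside $E_j$ or straddles a boundary. Write $E_j(x)=\frac1n\sum_i E_j(x_i)$. For the $x_i$ entirely inside $E_j$ we bound $\big\|\frac1n\sum_{i:\,\ran(x_i)\subset E_j}E_j(x_i)\big\|=\frac1n\big\|\sum_{i:\,\ran(x_i)\subset E_j}x_i\big\|\le\frac1n\cdot 1$, using that this sub-sum is a restriction of $x$ to an interval-free union of whole blocks, hence has norm $\le\|x\|\le 1$; more carefully, one uses $\|x\|\le1$ together with the fact that applying a single interval projection to $x$ and then restricting to a further interval can only cost the interval-projection norm, which is $1$ for the natural basis in these spaces. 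Summing over $j$ contributes at most $\frac{l}{n}$ to $\sum_j\|E_j(x)\|$ after we account carefully — actually the cleaner route is: $\sum_{j}\big\|\frac1n\sum_{i:\ran(x_i)\subset E_j}x_i\big\|$, these inner sums being over disjoint index sets whose union is all of $\{1,\dots,n\}$ minus boundary indices, so this total is at most $1$ by the triangle inequality is \emph{false} in general; instead one bounds each term by its number of summands over $n$, giving total $\le\frac{1}{n}\sum_j\#\{i:\ran(x_i)\subset E_j\}\le\frac{n}{n}=1$. For the boundary pairs, each contributes $\le\frac1n\|E_j(x_i)\|\le\frac1n$, and there are at most $l-1<l$ of them, contributing $<\frac{l}{n}$. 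Wait — this double counts; the honest accounting is to note that the whole-block part already sums to $\le 1$ using $\|x_i\|\le1$ summed against the $\ell_1$ count, and the boundary part adds at most $\frac{l}{n}$ more. Hence $\sum_{j=1}^l\|E_j(x)\|\le 1+\frac{l}{n}$, and dividing by $f(l)$ gives \eqref{E:4.2.1}.

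The main obstacle — really the only subtlety — is the first displayed bound in the previous paragraph: one must be sure that $\frac1n\|\sum_{i\in A}x_i\|\le\frac{\#A}{n}$ for each relevant index set $A$ and that these bounds, summed over $j$, really do telescope to $\le 1$ rather than to $\frac{l}{n}$ again. This is where the normalization $\|x_i\|\le1$ and the fact that the $A$'s over all $j$ partition a subset of $\{1,\dots,n\}$ of size $\le n$ are both used, via $\sum_j\frac{\#A_j}{n}\le\frac{n}{n}=1$. The rest is the triangle inequality and the elementary combinatorial fact that $l$ successive intervals create at most $l-1$ splits among successive blocks; I would present it in two clean paragraphs, isolating the ``whole-block mass $\le 1$'' estimate and the ``$\le l/n$ boundary mass'' estimate, then combine. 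No appeal to the implicit norm equation of ${\mathcal G \mathcal M}$ beyond $1$-unconditionality of $(e_i)$ and $\|x\|\le1$ is needed, which is why the lemma is flagged as easy and attributed to \cite{Sch1} and \cite{GM}.
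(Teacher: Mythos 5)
The paper itself gives no proof of this lemma (it is attributed to \cite{Sch1} and \cite{GM}), and your interval-splitting decomposition is indeed the standard route. However, as written one step fails: the claim that the number of boundary pairs $(i,j)$ --- pairs for which $E_j$ meets $\ran(x_i)$ without containing it --- is at most $l-1$. A single block straddling the interface between $E_j$ and $E_{j+1}$ already produces the \emph{two} pairs $(i,j)$ and $(i,j+1)$, and in general the number of such pairs can be as large as $2(l-1)$, which by your accounting would only give $1+2l/n$. What is at most $l-1$ is the number of \emph{interfaces} between consecutive $E_j$'s that land inside some block, equivalently the total excess $\sum_i (m_i-1)$, where $m_i$ denotes the number of intervals $E_j$ meeting $\ran(x_i)$ and the sum runs over the cut blocks (each cut block occupies $m_i-1$ consecutive interfaces, and distinct blocks occupy disjoint sets of interfaces). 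To recover the stated constant you must combine this with the observation you only hint at: a cut block belongs to no $A_j=\{i:\ran(x_i)\subset E_j\}$, so the whole-block mass is at most $\frac1n\big(n-\#\{\text{cut blocks}\}\big)$, and then the total is at most $\frac1n\big(n-\#\{\text{cut}\}\big)+\frac1n\sum_i m_i = 1+\frac1n\sum_i(m_i-1)\le 1+\frac{l-1}{n}$. Your final numbers come out right only because the erroneous pair count happens to coincide with the correct excess count; the compensation between the deficit in the whole-block sum and the surplus in the boundary sum needs to be made explicit.

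A second, smaller point: you close by saying only the $1$-unconditionality of $(e_i)$ and $\|x\|\le 1$ are needed, but the basis of ${\mathcal G \mathcal M}$ is very far from unconditional --- the whole paper is about the absence of unconditional sequences in ${\mathcal G \mathcal M}$. The estimate you actually use, $\|E_j(x_i)\|\le\|x_i\|\le 1$ for intervals $E_j$, is bimonotonicity, which holds because the norming sets ${\mathcal G \mathcal M}^*_m$ are by construction closed under restriction to intervals (the functionals $E(z^*)$). With that substitution and the corrected bookkeeping above, the argument is complete.
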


\begin{defin}\label{D:4.3} (Rapidly Increasing Sequences)
We call a block sequence  $(x_n)\subset B_{{\mathcal G \mathcal M}}$   {\em rapidly increasing sequence  of constant $c$}  or $c$-RIS,  with $c\in(0,1]$ if the following two conditions \eqref{E:4.3.1} and 
\eqref{E:4.3.2} are satisfied (recall that the sequence $\vp_n$ is given by \eqref{E:2.1}):
\begin{align}
\label{E:4.3.1}   &\text{For $n\in\N$,  $x_n$  is an $\ell_1^{k_n}$-average  of constant $c$, if $c<1$, or of constant}\\
&\text{$1/(1+\vp_n)$, if $c=1$, and the following two inequalities are satisfied:} \notag\\
 &\text{$\max\Big(\frac{2n}{f(k_n)} ,\frac{f(k_n)}{k_n}\Big)< \vp_n^2$ and 
           $f\big(\vp_n\sqrt{k_n}\big)\!\ge\! \frac1{\vp_n^2} \max\supp(x_{n-1})$,  if  $n\ge 2$,} \notag\\
 \label{E:4.3.2}  &\text{$(x_n)$ has a spreading model $E$ 
 with a  1-unconditional and seminormalized} \\
 &\text{basis $(e_i)$ and for  $l\in\N$ and $(a_i)_{i=1}^l\!\subset\!\R$ and $l\kleq n_1\kle n_2<\ldots n_l$ in $\N$} \notag \\
 &\qquad\qquad\frac1{1+\vp_l} \Big\|\sum_{i=1}^l a_i e_i\Big\|_E\le \Big\|\sum_{i=1}^l a_i x_{n_i}\Big\|_E \le (1+\vp_l) \Big\|\sum_{i=1}^l a_i e_i\Big\|_E. \notag 
\end{align}
We say that a sequence $(x_n)$ is an RIS, if it is an $c$-RIS for some constant $c$.  
If $c=1$, we say that $(x_n)$ is an {\em asymptotically isometric RIS}.   
\end{defin}

We note that from Lemma  \ref{L:4.1} it follows immediately that  any  infinite dimensional block subspace
 $Y$   of ${\mathcal G \mathcal M}$ contains  an asymptotically isometric RIS.

 \begin{remark} Let $(x_n)$ be a $c$-RIS, and $(E,\|\cdot\|)$ be the spreading model of $(x_n)$. Define
 for $l\in\N$ 
 \begin{equation}\label{E:4.4}
 g(l)=\frac{l}{\Big\|\sum_{i=1}^l e_i\Big\|_E}=\lim_{n_1<n_2<\ldots n_l}\frac{l}{\Big\|\sum_{i=1}^l x_{n_i} \Big\|}.
 \end{equation}
 From the construction  of ${\mathcal G \mathcal M}$ it follows that 
 \begin{equation}\label{E:4.5}
 g(l)\le f(l)/c \text{ for all $l\kin\N$,}
 \end{equation}
 in particular the spreading model $E$ of $(x_n)$ satisfies the conditions of Lemma  \ref{L:2.8}. 
 
 It follows therefore that  for $n\in\N$ and $\vp>0$ we can
 choose an appropriate $m\in [n, N(n,\vp)]$ and $m$ elements from $(x_j)$ so that  their sum
 is up to a scalar  multiple, which is as  close to $\frac{g(m)}{m}$, as we wish, an $\ell_1^n$ average of constant $c-\vp$. 
 \end{remark}
 Thus it is justified to introduce the following notion of {\em Special Rapidly Increasing Sequences}.

 \begin{defin}\label{D:4.6}   (Special Rapidly Increasing  Sequences)
 A block sequence $(x_n)$ in $B_{GM}$ is called  a {\em Special Rapidly Increasing  Sequence of constant $c$}, with $c\kin(0,1]$,  or $c$-SRIS, if there is a
 $c$-RIS $(\tilde x_n)\subset B_{GM}$, so that for each $n\kin\N$
there is   $\pt=\pt(n)\in P$, $\pt\ge n$,  $\mt(n)\in [j_{\pt(n)}, N(j_{\pt(n)}, \vp_{\pt(n)})] $    (here $N(\cdot,\cdot)$ is chosen as in Lemma \ref{L:2.8}) and natural numbers
  $  \mt(n) \le \st(n,1)<\st(n,2)<\ldots \st(n,\mt(n))$, so that
  \begin{align}
    \label{E:4.6.1}
  &  x_n =  \frac{\tilde g(\mt(n))}{\mt(n)}  \sum_{r=1}^{\mt(n)}  \tilde x_{\st(n,r)}, \\
   \intertext{where $$\tilde g(l)=\frac{l}{\Big\| \sum_{i=1}^l \tilde e_i\Big\|_{\tilde E}}= \lim_{n_1<n_2<\ldots n_l} \frac{l}{\Big\| \sum_{i=1}^l \tilde x_{n_i}\Big\|_{\tilde E} }$$
  and $\tilde E$ is the spreading model of  $(\tilde x_n)$ with semi normalized $1$-unconditional basis   $(\tilde e_n)$,}
\label{E:4.6.2}  & x_n \text{ is an $\ell_1^{j_{\pt(n)} }$ -average of constant $c$, if $c<1$, or 
$1/(1+\vp_{\pt(n)})$, if $c=1$,}\\
\label{E:4.6.2bis}  & (x_n) \text{\ is an RIS of constant $c$, with $k_n=j_{\pt(n)}$ in condition \eqref{E:4.3.1}.} 
  \end{align}
  \end{defin}
\begin{remark}  From the remark before Definition \ref{D:4.6}  it follows that every block subspace contains special rapidly increasing sequences.
The point  of Definition \ref{D:4.6}  is that we may regard the $x_n$ at the same time  as $\ell_1^{k_n}$ -averages for fast increasing $k_n$, but
 also, up to some factor, sums of elements of an RIS. We shall use this to prove that SRIS generate spreading models equivalent to the unit vector basis of $S$. 
 Note that every normalized  block basis of $\mathcal G\mathcal M$ dominates the unit basis of $S$. 
 \end{remark}

\begin{lem}[Action of $A^*_l$ on sums of elements of an RIS] \label{L:4.7} Assume that $(x_n)$ is a  $c$-RIS, $c\in(0,1]$.  Let   $m\le n_1<n_2<\ldots n_m$ be in $\N$ and $(a_i)_{i=1}^m\in \R^m$.
 Put $y=\sum_{i=1}^m  a_ix_{n_i}$.
\begin{enumerate}
\item[a)]If $f(l)\le 2m/\vp_{n_1}$  then there are numbers $l_1$ and $l_2$ in $\N$, with $l_1+l_2\le \min(2l,m)$, 
intervals $I_1<I_2<\ldots< I_{l_1}$ in $\{1,2\ldots m\}$, so that $l_2=\#I_0$, with $I_0=\{1,2\ldots \}\setminus \bigcup_{j=1}^{l_1}I_j$ and
\begin{equation}\label{E:4.7.1} 
\|y\|_\ell\le \frac1{f(l)}\left[ \sum_{j=1}^{l_1} \Big\|\sum_{s\in I_j} a_s x_{n_s}\Big\|+
 \sum_{s\in I_0 } |a_s| \frac{1+2\vp_{n_s}}{c} \|x_{n_s}\|\right],
\end{equation}
\item[b)] If $f(l)> 2 m/\vp$,  for some $\vp\in[\vp_{n_1},1)$, then
\begin{equation}\label{E:4.7.2} \|y\|_l \le   \max_{i\le m}|a_i|\big[ 2\vp + \max_{i=1,2\ldots  m} \|x_{n_i}\|_l\big]\le   \max_{i\le m}|a_i|\big[  2\vp +1].
\end{equation}
\end{enumerate}
\end{lem}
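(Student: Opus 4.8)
The plan is to estimate $\|y\|_l = \frac1{f(l)}\sum_{j=1}^l\|E_j(y)\|$ for an optimal choice of successive intervals $E_1<\cdots<E_l$, by grouping the intervals $E_j$ according to how they interact with the ranges of the individual RIS vectors $x_{n_i}$. First I would classify each $E_j$ as either \emph{long}, meaning it contains the range of at least one $x_{n_i}$ (equivalently it "sees" a whole RIS vector), or \emph{short}, meaning $E_j$ meets the ranges of at most two consecutive $x_{n_i}$'s without containing either entirely. For the long intervals I would merge consecutive blocks and record the induced partition of $\{1,\dots,m\}$ into intervals $I_1<\cdots<I_{l_1}$ (those indices $s$ for which $x_{n_s}$ is absorbed into some long $E_j$), setting $I_0$ to be the remaining indices. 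A counting argument gives $l_1+l_2\le\min(2l,m)$: each long interval contributes at least one index to $\bigcup I_j$, and each short interval touches at most two of the $x_{n_s}$, so $l_2=\#I_0\le 2\cdot(\#\text{short intervals})$, and $l_1\le\#\text{long intervals}$; summing bounds both against $l$ and trivially against $m$.

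For part (a), on the long intervals the estimate $\sum_{E_j\text{ long}}\|E_j(y)\|\le\sum_{t=1}^{l_1}\|\sum_{s\in I_t}a_s x_{n_s}\|$ follows because the $E_j$'s within a merged block $I_t$ are successive intervals whose union contains $\bigcup_{s\in I_t}\ran(x_{n_s})$, together with the triangle inequality and the fact that restricting to an interval is norm-decreasing on $B_{\mathcal G\mathcal M}$ up to the basis constant — more precisely one uses that $\|E(z)\|\le\|z\|$ since each $E$ acts as a norm-one functional-truncation in the implicit norm. For the short intervals, each $E_j$ meets the ranges of at most two consecutive RIS vectors; here is where the $\ell_1^{k_n}$-average structure enters: since $x_{n_s}$ is an $\ell_1^{k_{n_s}}$-average of constant $c$, a single interval $E$ captures at most $\|E(x_{n_s})\|\le\frac{1+2\vp_{n_s}}{c}\,\frac{1}{k_{n_s}}\cdot(\text{number of blocks of the average in }E)\cdot c^{-1}$... actually the cleaner route is: for any interval $E$, $\|E(x_{n_s})\|\le\frac{1+2\vp_{n_s}}{c}\|x_{n_s}\|$ is false as stated for a single interval, so instead I would sum over all short $E_j$ meeting a fixed $x_{n_s}$ and use that the total is bounded by $|a_s|\,\frac{1+2\vp_{n_s}}{c}\|x_{n_s}\|$ by splitting $x_{n_s}=\frac1{k}\sum v_r$ and noting the short intervals cut the normalized average into pieces summing (in norm) to at most $(1+2\vp_{n_s})\|x_{n_s}\|/c$ — this is the standard "action on $\ell_1^+$-averages" computation, and I expect to invoke an estimate in the spirit of Lemma \ref{L:4.2} together with condition \eqref{E:4.3.1}. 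Dividing by $f(l)$ yields \eqref{E:4.7.1}.

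For part (b), when $f(l)>2m/\vp$ with $\vp\ge\vp_{n_1}$, the number of blocks $l$ is huge compared to $m$, so almost every $E_j$ is short and meets at most one or two RIS vectors. I would bound $\|y\|_l\le\frac1{f(l)}\big(\sum_{E_j\text{ short}}\|E_j(y)\|+\sum_{E_j\text{ long}}\|E_j(y)\|\big)$; the long part is controlled by $\max_i|a_i|\,\max_i\|x_{n_i}\|_l$ (regrouping the long blocks over each $x_{n_i}$ and using that their combined action is an $\|\cdot\|_{l}$-type quantity on a single vector), and the short part, where each $x_{n_i}$ is sliced into pieces of an $\ell_1$-average, contributes at most $\max_i|a_i|\cdot\frac{m}{f(l)}\cdot\frac{2}{c}$ or so; the hypothesis $f(l)>2m/\vp$ makes this at most $2\vp\max_i|a_i|$ after absorbing constants, giving \eqref{E:4.7.2}, and the final bound $\|x_{n_i}\|_l\le 1$ is immediate since $x_{n_i}\in B_{\mathcal G\mathcal M}$ and $\|\cdot\|_l\le\|\cdot\|$. \textbf{The main obstacle} will be the short-interval bookkeeping in part (a): making precise the claim that the fragments cut out of a normalized $\ell_1^{k}$-average by disjoint intervals have norms summing to at most $(1+2\vp)/c$ times the norm of the average, which requires carefully using both the lower bound $\|x_n\|\ge c$ and the $\ell_1^{k_n}$-average structure (not merely $\ell_1^{+k_n}$), and tracking that no fragment is double-counted across the long/short dichotomy at the two boundary vectors of each merged block.
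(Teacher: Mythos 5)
Your part (a) is essentially the paper's argument: refine the optimal partition $E_1<\cdots<E_l$ so that each piece either contains, is contained in, or is disjoint from each $\ran(x_{n_s})$; absorb the whole vectors into the intervals $I_1<\cdots<I_{l_1}$; and control the cut vectors via the $\ell_1^{k_{n_s}}$-average estimate of Lemma \ref{L:4.2} combined with \eqref{E:4.3.1}, which gives $\sum_t\|\tilde E_t(x_{n_s})\|\le 1+l/k_{n_s}\le 1+\vp_{n_s}$. You correctly isolate this as the crux. One bookkeeping slip: your bound $l_2\le 2\cdot\#\{\text{short intervals}\}$ ignores that \emph{long} intervals also cut a vector at each of their two endpoints, so as stated your count gives $3l$, not $2l$. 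The correct count is that only $l-1$ interior boundary points of the partition exist, each lying in the range of at most one $x_{n_s}$, so $l_2\le l-1$, while $l_1\le l$; together with the trivial bound by $m$ this gives $l_1+l_2\le\min(2l,m)$.

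Part (b) has a genuine gap. Your long/short decomposition, applied vector by vector, produces a bound of the form $\sum_{i}|a_i|\,\|x_{n_i}\|_l$ plus error terms, i.e.\ a \emph{sum} over $i$, whereas the statement requires $\max_i\|x_{n_i}\|_l$; nothing in your sketch explains why all but one summand is negligible. Moreover your estimate ``the short part contributes at most $\max_i|a_i|\cdot\frac{m}{f(l)}\cdot\frac2c$'' implicitly uses $1+l/k_{n_i}\le 2$ for every $i$, which fails for the \emph{initial} vectors of the sequence: since $f(l)>2m/\vp$ forces $l$ to be astronomically large, one may well have $k_{n_1}\ll l$, and a vector in $B_{{\mathcal G \mathcal M}}$ cut by $l$ intervals can have fragment norms summing to roughly $\min(l,\#\supp)$. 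The paper's proof of (b) is structured entirely around the pivot index $i_0=\max\{i:\max\supp(x_{n_{i-1}})<\vp f(l)\}$: for $i<i_0$ the combined support is smaller than $\vp f(l)$, so after dividing by $f(l)$ those vectors contribute at most $\vp\max|a_i|$; for $i>i_0$ the second growth condition in \eqref{E:4.3.1} yields $f(k_{n_i})\ge\frac1{\vp}\max\supp(x_{n_{i_0}})\ge f(l)$, whence Lemma \ref{L:4.2} gives $\|x_{n_i}\|_l\le 2/f(l)$ and the total over those $i$ is at most $2m/f(l)\le\vp$; only the single vector $x_{n_{i_0}}$ is charged its full $\|\cdot\|_l$-norm. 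This use of the ``rapidly increasing'' hypothesis is the whole content of part (b), and your proposal does not supply it.
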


\begin{proof} We put $z_s=x_{n_s}$ for $s=1,2\ldots m$.  In order to prove (a) we choose finite intervals
$E_1<E_2<\ldots E_l$  of $\N$, so that 
$$\|y\|_l=\frac1{f(l)} \sum_{t=1}^l \|E_t(y)\|.$$
Without loss of generality we can assume that
$$\bigcup_{t=1}^l E_t=\ran(y)=[\min\supp(z_1),\max\supp(z_{m})].$$
For $t=1,2\ldots l$  we divide $E_t$ in three intervals $E^{(1)}_t$,  $E^{(2)}_t$,  $E^{(3)}_t$ (some of them possibly empty)
as follows: we let, if it exists, 
 $m(t)$ be the unique number  in $\{1,2,\ldots m\}$    so that  $\min\ran(z_{m(t)} )< \min E_t\le \max\ran(z_{(m(t)} )$ and put
$$E^{(1)}_t= E_t\cap[1,\max \ran(z_{m(t)})].$$
If $m(t)$ does not exists we let $E^{(1)}_t=\emptyset$.
Then we let $m'(t)$ be the unique number $m'(t)$, if it exists, 
so that   $\min\ran(z_{m'(t)} )< \max E_t\le \max\ran(z_{(m(t)} )$ ,
and put 
 $$E^{(3)}_t=\big(E_t\cap [\min\ran(z_{m'(t)}),\infty) \big)\setminus E^{(1)}_t.$$
If $m'(t)$ does not exists we let $E^{(1)}_t=\emptyset$.
 Finally we let $E^{(2)}_t\setminus (E^{(1)}_t\cup E^{(3)}_t)$.
 Let $\tilde \cE$ be the non empty elements of $\{E_t^{(1)},E_t^{(2)},E_t^{(3)}:t\le l\}$ and $\tilde l$ the cardinality of $\tilde\cE$.
 
 We note that  
 $\tilde \cE$ consists of pairwise  disjoint  intervals which can be ordered into  $\Et_1<\Et_2<\ldots \Et_{\lt}$,
  and that for any $i\le m$ and any $j\le \lt$, either $\Et_j$ contains $\ran(z_i)$, or is contained in $\ran(z_i)$, or 
   $\Et_j$ and $\ran(z_i)$ are disjoint.

 For $s\in\{1,2,\ldots,m\}$ we  deduce from our condition on $f(l)$ and  \eqref{E:4.3.1} that
  $$\frac{l}{k_{n_s}}\le \frac{f(l)}{f(k_{n_s})}\le \frac{2m}{\vp_{n_1}f(k_{n_s})} \le\frac{2n_s}{\vp_{n_1}f(k_{n_s})} 
 \le  \frac{\vp_{n_s}^2}{\vp_{n_1}}\le \vp_{n_s}.$$
 We let  
 $$I_0=\big\{ s=1,2\ldots,m : \#\{t:\Et_t\subset \ran(x_s)\}\ge 2 \big\},$$
  
  Lemma \ref{L:4.2} yields that for  every $s\in I_0$
$$\sum_{t, \Et_t \subset \ran(z_s)} \|\Et_t(z_s)\|\le 1+\frac{l}{k_{n_s}}
 \le 1+{\vp_{n_s}}\le \frac{\|z_{s}\|+2\vp_{n_s}}c. $$
We reorder the set $\tilde\cE'$ of all sets $\Et_t$,  $t\in \{1,2\ldots \lt\} $,  which contain  the range of  at least one $x_{n_s}$, 
  into $E'_1<\ldots E'_{l_1}$ and
 we define $t=1,\ldots l_1$
 $$I_t=\big\{s\in \{1,2\ldots m\}: \ran(x_s)\subset \Et_t\big\},$$
 and conclude that $l_1+l_2\le \min(m,2l)$, where  $l_2=\#I_0$, and
 \begin{align*}
\frac1{f(l)}\sum_{t=1}^l \|E_i(y)\|&\le \frac1{f(l)}\sum_{t=1}^{\lt} \|\Et_t(y)\|\\
 &=  \frac1{f(l)}\left[ \sum_{t=1}\Big\|\sum_{s\in I_t} a_s z_s\Big\|  +\sum_{s\in I_0}|a_s|\sum_{t=1}^{\lt}\|\Et_t(z_s) \| \right]\\
 &\le  \frac1{f(l)}\left[ \sum_{t=1}\Big\|\sum_{s\in I_t} a_s z_s\Big\|  +\sum_{s\in I_0}|a_s| \frac{\|z_s\|+\vp_{n_s}}c \right]
 \end{align*}
   which implies (a).

In order to prove our claim  (b) let $\vp\in[\vp_{n_1},1]$ and  define 
$$i_0=\max\big\{ i=1,2\ldots k:   \max(\supp( z_{i-1})) <f(l) \vp \big\}$$
(with  $\max(\supp( z_{0})) :=0$).

Then by \eqref{E:4.3.1}  it follows for  $i\in \{i_0+1, i_0+2, \ldots, m\}$   that
$$f(k_{n_i})>\frac1{\vp_{n_i}}\max\supp(z_{{i-1}})\ge    \frac1{\vp}\max\supp(z_{i_0}) \ge       f(l)  $$
and thus, 
\begin{align*}
\Big\|\sum_{i=1}^m a_i z_{i}\Big\|_l&\le \frac1{f(l)} \sum_{i=1}^{i_0-1}|a_i| \|z_i\|_{l} +|a_{i_0}|\|z_{i_0}\|_l+\frac1{f(l)}\sum_{i=i_0+1}^m |a_i|\|z_{i}\|_l\\
 &\le \max_{i\le m} |a_i| \Bigg[ \frac{\max\supp(z_{i_0-1})}{f(l)}+\|z_{i_0}\|_l+\frac1{f(l)} \sum_{i=i_0+1}^m\Big(1+\frac{l}{k_{n_i}}\Big)\Bigg]\\
  &\quad\text{ (by  Lemma \ref{L:4.2})}\\
 &\le  \max_{i\le m} |a_i| \Big[   \vp+\|z_{i_0}\|_l+\frac{2m}{f(l)} \Big]<  \max_{i\le m} |a_i| \big[ 2\vp+ \|z_{i_0}\|_l \Big]
\end{align*}
which proves part (b).
\end{proof}

\begin{lem} [Action of $\Gamma^*_k$ on sums of elements of an RIS] \label{L:4.7bis} Assume that $(x_n)$ is a  block-sequence in ${\mathcal G \mathcal M}$, $c\in (0,1]$, and let $z^* \in \Gamma^*_k$. Let   $m\le n_1<n_2<\ldots n_m$ be in $\N$ and $(a_i)_{i=1}^m\in \R^m$.
 Put $y=\sum_{i=1}^m  a_ix_{n_i}$.  
\begin{enumerate}
\item[a)]If $(x_n)$ is a $c$-RIS, then
 \begin{align}\label{E:4.7.89}
|z^*(y)|
&\le  \frac{|z^*_{t_0}(y)|}{\sqrt{f(k)}} +\frac{\max_{s\le m}|a_s|}{\sqrt{f(k)}}\left[1+ 2m\vp_{n_1}+\sum_{t\in T_0} \max_{s\in S_t} \|x_{n_s}\|_{l_t}     \right]\\
 &\qquad+ \frac1{\sqrt{f(k)}} \sum_{s=s_0+1}^m |a_s| \Big|  \sum_{t\in T_s} z^*_t(x_{n_s})\Big|,\notag
\end{align}
where $t_0\in\{1,2\ldots k\}$, $T_s, \subset \{t_0+1,t_0+2\ldots k\}$, $s=0,1,2\ldots m$ are defined as follows:
\begin{align*} &t_0=\min\{ t=1,\ldots k:                 z^*_t(y)\not=0\},\\
&\text{(we assume that $t_0$ exists, otherwise $z^*(y)=0$)}\\
&T_s=\big\{t\!=\!t_0\!+\!1,\ldots k: \supp(z^*_t)\!\subset\! [\min\ran(z_{s}), \min \ran(z_{s+1}))\big\}, \text{ if  $s\!=\!1,\ldots k$ }\\
&\text{(with $\min\ran(z_{m+1}):=\infty$)} \\
&T_0=\{ t_0+1,\ldots k\} \setminus \bigcup_{s=1}^k T_s.
\end{align*}
\item[b)] If $(x_n)$ s a $c$-SRIS, then
\begin{equation}\label{E:50} |z^*(y)| \le
\frac{|z^*_{t_0}(y)|}{\sqrt{f(k)}} +\frac{\max_{s\le m}|a_s| }{\sqrt{f(k)}}\min(m,k) \max_{s\le m, t_0<t\le k} \|x_s\|_{l_t} + 2\max_{s\le m}|a_s|.
\end{equation}
\end{enumerate}

\end{lem}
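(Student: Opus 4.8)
The plan is to prove (a) first and then derive (b) from (a) by feeding in the extra structure of an SRIS.

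\textbf{Setup for (a).} Write $z^*=\frac1{\sqrt{f(k)}}\sum_{t=1}^k z^*_t$ with $z^*_1<\cdots<z^*_k$, $z^*_t\in A^*_{l_t}({\mathcal G \mathcal M})\cap{\bf Q}$ (here $l_t$ is the index denoted $m_t$ in the definition of $\Gamma^*_k$), and set $z_s=x_{n_s}$. Let $t_0=\min\{t:z^*_t(y)\ne0\}$ (if no such $t_0$ exists then $z^*(y)=0$ and there is nothing to prove); the term $z^*_{t_0}(y)$ will be kept untouched. The crucial structural fact is that for every $t>t_0$ the coding relation $l_t=\sigma(z^*_1,\dots,z^*_{t-1})$ and \eqref{E:2.10} force $f(l_t)$ to be astronomically large: indeed $z^*_{t_0}\in A^*_{l_{t_0}}$ is a sum of $l_{t_0}$ successive nonzero functionals, so $\max\supp(z^*_{t_0})\ge l_{t_0}$ is already huge, and $f(l_t)\ge N_t/\vp_{N_t}$ with $N_t=\max\bigl(\bigcup_{s<t}\supp(z^*_s)\bigr)\ge\max\supp(z^*_{t_0})$. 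I then partition $\{t_0+1,\dots,k\}$ into $T_0,T_1,\dots,T_m$: put $t$ into $T_s$, $s\ge1$, if $\supp(z^*_t)\subset[\min\ran(z_s),\min\ran(z_{s+1}))$ (with $\min\ran(z_{m+1})=\infty$), and into $T_0$ otherwise. Since distinct $z^*_t$ have disjoint increasing supports, each of the $m-1$ internal boundary points $\min\ran(z_{s+1})$ is crossed by at most one $z^*_t$, so $\#T_0\le m$ and $\#T_0\le k-t_0$, and the number of nonempty $T_s$ is at most $\min(m,k)$.

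\textbf{The three groups of terms.} For $t\in T_s$ with $s\ge1$, disjointness of the block $(z_s)$ forces $z^*_t(y)=a_sz^*_t(z_s)$; I split this sum at the largest index $s_0$ for which $\max\supp(z_s)$ is short compared with the resolution $f(l_{t_0+1})$ of the later functionals. For $s\le s_0$ each $|z^*_t(z_s)|$ is at most $|\ran(z_s)|/f(l_t)$ and hence negligible, and the total of these contributions, together with the other leftover error terms, is absorbed into $1+2m\vp_{n_1}$ using the lacunarity \eqref{E:2.5}; for $s>s_0$ the term $a_s\sum_{t\in T_s}z^*_t(z_s)$ is retained verbatim, yielding the last summand of \eqref{E:4.7.89}. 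For $t\in T_0$, let $S_t=\{s:\ran(z_s)\cap\supp(z^*_t)\ne\emptyset\}$ (a set of at most $m$ indices, all $\ge n_1\ge m$). Since $f(l_t)$ is huge we may apply Lemma \ref{L:4.7}(b) with $l=l_t$ and $\vp=\vp_{n_1}$ to the subsum $\sum_{s\in S_t}a_sz_s$, getting $|z^*_t(y)|\le\bigl\|\sum_{s\in S_t}a_sz_s\bigr\|_{l_t}\le\max_s|a_s|\bigl(2\vp_{n_1}+\max_{s\in S_t}\|x_{n_s}\|_{l_t}\bigr)$; summing over the at most $m$ indices $t\in T_0$ produces the $2m\vp_{n_1}+\sum_{t\in T_0}\max_{s\in S_t}\|x_{n_s}\|_{l_t}$ part. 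Collecting the three groups and dividing by $\sqrt{f(k)}$ gives \eqref{E:4.7.89}.

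\textbf{Part (b).} Now $(x_n)$ is a $c$-RIS by \eqref{E:4.6.2bis}, so \eqref{E:4.7.89} applies, and it remains to collapse its right-hand side to \eqref{E:50}. The extra input is \eqref{E:4.6.2}: each $x_{n_s}$ is an $\ell_1^{j_{\pt(n_s)}}$-average with $\pt(n_s)\ge n_s$. For $s>s_0$ with $T_s\ne\emptyset$, the functionals $z^*_t$, $t\in T_s$, are successive and norm one, so $\frac1{f(\#T_s)}\sum_{t\in T_s}z^*_t\in A^*_{\#T_s}({\mathcal G \mathcal M})$, and Lemma \ref{L:4.2} gives $\bigl|\sum_{t\in T_s}z^*_t(x_{n_s})\bigr|\le f(\#T_s)\|x_{n_s}\|_{\#T_s}\le 1+\#T_s/j_{\pt(n_s)}$; since $j_{\pt(n_s)}$ is gigantic relative to $\#T_s\le k$ by the growth conditions of Lemma \ref{L:1.13}, this is $\le1$ up to a negligible error. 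There being at most $\min(m,k)$ such $s$, at most $\min(m,k)$ indices in $T_0$ each contributing $\le\max_{s,t}\|x_s\|_{l_t}$, while $2m\vp_{n_1}\le1$ (as $n_1\ge m$ and $\vp_{n_1}\le2^{-n_1}$) and $\sqrt{f(k)}\ge1$, one assembles exactly \eqref{E:50}.

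The main obstacle is part (a), and within it the bookkeeping for the straddling functionals in $T_0$ and the choice of the threshold $s_0$: this is precisely where the coding-forced largeness of the $l_t$, the RIS growth conditions \eqref{E:4.3.1}, the lacunarity of $J$, and Lemma \ref{L:4.7} have to be used in concert, while part (b) and the remaining estimates in (a) are routine applications of Lemmas \ref{L:4.2} and \ref{L:4.7}.
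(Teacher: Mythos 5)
Your part (a) follows the paper's route: the same splitting of $\{t_0+1,\dots,k\}$ into $T_0$ and the $T_s$, the same use of \eqref{E:2.10} to make $f(l_t)$ large for $t>t_0$, and Lemma \ref{L:4.7}(b) for the straddling functionals. Two points are looser than they should be. The paper's $s_0$ is calibrated against $\vp_{n_1}\sqrt{f(k)}$ (see \eqref{E:4.10}), not against $f(l_{t_0+1})$; that particular calibration is what is needed later (in part (b) and in Section \ref{S:6}) to deduce $k\le \vp_{n_s}k_{n_s}^{1/2}$ for $s>s_0$, so your threshold may yield \emph{an} inequality of the shape \eqref{E:4.7.89} but not with the $s_0$ the rest of the paper relies on. Also, the borderline group $T_{s_0}$ cannot simply be absorbed into $1+2m\vp_{n_1}$: the paper bounds it by noting that $\frac1{\sqrt{f(k)}}\sum_{t\in T_{s_0}}z^*_t$ is an interval restriction of $z^*$, hence has norm at most $1$; this is the source of the isolated ``$1$'' in (a) and of part of the ``$2\max_s|a_s|$'' in (b).

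The genuine gap is in part (b). You estimate, for each $s>s_0$ with $T_s\ne\emptyset$,
$$\Big|\sum_{t\in T_s}z^*_t(x_{n_s})\Big|\le f(\#T_s)\,\|x_{n_s}\|_{\#T_s}\le 1+\frac{\#T_s}{j_{\pt(n_s)}}\approx 1,$$
so these $\le\min(m,k)$ indices contribute $\frac{\max_s|a_s|}{\sqrt{f(k)}}\min(m,k)\cdot 1$ in total. But \eqref{E:50} demands the factor $\max_{s\le m,\,t_0<t\le k}\|x_s\|_{l_t}$ in place of $1$, and that factor is typically far smaller than $1$ — in the key application (Lemma \ref{L:4.15}) it is shown to be $\le\vp_{n_1}$, precisely because the $\sigma$-values $l_t$ avoid the parameters $j_{\pt(n_s)}$. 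Without it the term $\frac{\min(m,k)}{\sqrt{f(k)}}\max_s|a_s|$ is unbounded in $m$ and $k$ and cannot be hidden in $2\max_s|a_s|$; you have proved a strictly weaker inequality that does not give \eqref{E:50} and would not support Proposition \ref{P:5.1}. Note also that your argument never uses the defining feature of an SRIS, namely that $x_{n_s}=\frac{\gt(\mt(n_s))}{\mt(n_s)}\sum_r\xt_{\st(n_s,r)}$ is itself a weighted sum of elements of an auxiliary RIS. The paper's proof of (b) applies the inequality of part (a) \emph{recursively} to $\tilde z^*=\frac1{\sqrt{f(k)}}\sum_{t\in T_s}z^*_t$ acting on that inner sum; the calibration of $s_0$ then yields $k\gt(\mt(n_s))/\mt(n_s)\le\vp_{n_s}$, so everything except the single leading functional $z^*_{t_s}$ contributes $O(\vp_{n_s})$, and $|z^*_{t_s}(x_{n_s})|\le\|x_{n_s}\|_{l_{t_s}}$ supplies exactly the missing factor. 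If your version of (b) were correct it would hold for every $c$-RIS, and the notion of SRIS would be superfluous — a strong sign that the essential idea is missing.
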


\begin{proof}
We first assume that $(x_n)$ is  only a $c$-RIS . For $m\le n_1<n_2<\ldots n_s$ in $\N$  and $(a_s)_{s=1}^m\subset \R\setminus\{0\}$ we put
$$y=\sum_{s=1}^m a_s x_{n_s}.$$
Secondly let $k\in\N$ and $z^*\in \Gamma^*_k$. 
We write $z^*$ as
$$z^*=\frac1{\sqrt{f(k)}}\sum_{t=1}^k z^*_t \in \Gamma^*_k,$$
with $z^*_1\in A^*_{l_1}$, and  $l_1=j_{2k'}$, for some $k'\ge k$, and $z^*_{i}\in  A^*_{l_{i}}$, with $l_i=\sigma(z^*_1,z^*_2,\ldots,z^*_{i-1})$, for $i=2,\ldots k$.
We note that for $t_0$ as defined in the statement it follows that
$$\max(\supp(z^*_{t_0}))\ge \min( \supp(x_{n_1})).$$ 
We abbreviate $z_s=a_s x_{n_s}$, for $s=1,2\ldots m$.
For $t\kin T_0$ let 
$$S_t=\big\{ s\in\{1,2\ldots m\}: \ran(z_t^*)\cap \ran(z_s)\not=\emptyset\big\}.$$
Note that $S_t$ is an interval in $\{1,2,\ldots m\}$ and that
each $s\in\{1,2\ldots,m\}$ maybe element in at most two of the sets $S_t$, $t\kin T_0$.
Using these notations, we can now write $z^*(y)$ as
\begin{align}\label{E:4.8}z^*(y)&=\frac{1}{\sqrt{f(k)}}z^*_{t_0}(y) +\frac{1}{\sqrt{f(k)}}\sum_{t\in T_0} \sum_{s\in S_t} z^*_t(z_s)
 +\frac{1}{\sqrt{f(k)}} \sum_{s=1}^m \sum_{t\in T_s} z^*_t(z_s).
\end{align}
  In order to estimate the second term in \eqref{E:4.8} we first deduce from
  \eqref{E:2.10}  and the trivial estimate $\min\supp(z_1)> 2n_1$, that for $t\kin T_0$
  $$f(l_t)\ge \frac{\max\supp(z^*_{t_0})}{\vp_{\max\supp(z^*_{t_0})} }\ge \frac{\min\supp(z_1)}{\vp_{n_1}}>\frac{2n_1}{\vp_{n_1}}\ge \frac{2m}{\vp_{n_1}},$$
  and Lemma \ref{L:4.7} (b) yields therefore that
  \begin{equation}\label{E:4.9}
  \Big|z^*\Big(\sum_{s\in S_t} z_s \Big)\Big|\le \max_{s\in S_t}|a_i| \big[ 2\vp_{n_1}+\max_{s\in S_t}\|x_{n_s}\|_{l_t}\big], \text{ whenever $t\in T_0$.}
  \end{equation}
  
  In order to estimate the third term in \eqref{E:4.8} we define
  \begin{equation}\label{E:4.10} s_0= \min\big\{s=1,2\ldots m: \max \supp(z_{s-1})<\vp_{n_1} \sqrt{f(k)}\big\}\end{equation}
with the usual convention that  $\max\supp(z_{0}) =0$. 
We first note that if $s_0\ge 2$
\begin{equation}\label{E:4.11}
\frac1{\sqrt{f(k)}} \sum_{s=1 }^{s_0-1}  \sum_{t\in T_s} |z^*_t(z_s)|\le
\frac1{\sqrt{f(k)}}\max\supp(z_{s_0-1})\max_{s\le m}|a_s|\le \vp_{n_1}\max_{s\le m}|a_s|.
\end{equation}
Secondly, if $T_{s_0}\not=\emptyset$ we  let 
$$I=\Big[\min\bigcup_{t\in T_{s_0}} \ran(z^*_t),\max\bigcup_{t\in T_{s_0}} \ran(z^*_t)\Big],$$
and deduce that
\begin{equation}\label{E:4.12}
\frac1{\sqrt{f(k)}} \Big| \sum_{t\in T_{s_0}}  z^*_t(z_{s_0})\Big| =|I(z^*)(z_{s_0})|\le |a_{s_0}|\cdot\|x_{n_{s_0}}\| \le \max_{s\le m}|a_s|.
\end{equation}
Adding up the estimates  obtained in   \eqref{E:4.11} and \eqref{E:4.12} and inserting them into \eqref{E:4.8}, we obtain
\begin{align}\label{E:4.13}
|z^*(y)|&\le \frac{|z^*_{t_0}(y)|}{\sqrt{f(k)}}+\frac1{\sqrt{f(k)}}\sum_{t\in T_0}  \max_{s\in S_t}|a_s|\big[\max_{s\in S_t} \|x_{n_s}\|_{l_t} +2\vp_{n_1}\big] \\
&\qquad +\max_{s\le m}|a_s| + \frac1{\sqrt{f(k)}} \sum_{s=s_0+1}^m |a_s| \Big|  \sum_{t\in T_s} z^*_t(x_{n_s})\Big| \notag\\
&\le  \frac{|z^*_{t_0}(y)|}{\sqrt{f(k)}} +\frac{\max_{s\le m}|a_s|}{\sqrt{f(k)}}\left[1+ 2m\vp_{n_1}+\sum_{t\in T_0} \max_{s\in S_t} \|x_{n_s}\|_{l_t}     \right] \notag\\
  &\qquad + \frac1{\sqrt{f(k)}} \sum_{s=s_0+1}^m |a_s| \Big|  \sum_{t\in T_s} z^*_t(x_{n_s})\Big|,\notag
\notag\end{align}
which proves \eqref{E:4.7.89}.

In order to prove part (b) we now assume that $(x_n)$ is an SRIS of constant $c$,
 and want for  $s=s_0+1,\ldots m$, with $T_s\not=\emptyset$, to  find an upper estimate for  
$$ \frac1{\sqrt{f(k)}}
 \Big|  \sum_{t\in T_s} z^*_t(x_{n_s})\Big|.$$ 
 Thus, we assume that there is an RIS $(\xt_n)\subset B_{{\mathcal G \mathcal M}}$ of constant $c$,  and for each $n\in\N$ numbers
 $\pt(n)\in\N$, $\pt(n)\ge n$, $\mt(n)\in [j_{\pt(n)}, N(j_{\pt(n)},\vp_{\pt(n)}]$ and $\mt(n)\le \st_1(n)<s_2(n)<\ldots \st_{\mt(n)}(n)$ so that
  $$x_{n}=\frac{\gt(\mt(n))}{\mt(n)} \sum_{r=1}^{\mt(n)} \xt_{\st_r(n)},$$
 where $\gt:[1,\infty)\to [1,\infty)$ is an increasing function with $\gt(\xi)\le f(\xi)/c$, $\xi\ge 1$ and so that
 $k_{n}=j_{\pt(n)}$ (thus $x_{n}$ is an $\ell_1^{\pt(n)}$-average of constant $c$).
 
 We fix  $s=s_0+1,s_+2,\ldots m$, with $T_s\not=\emptyset$, and apply now our estimate \eqref{E:4.13}  to 
 $x_{n_s}=\frac{\gt(\mt(n_s))}{\mt(n_s)} \sum_{r=1}^{\mt(n_s)} \xt_{\st_r(n_s)}$ instead of $y$ and to $\zt^*=\frac1{\sqrt{f(k)}}\sum_{t\in T_s} z^*_t$, instead of $z^*$.
 Strictly speaking $\zt^*$ is not in $\Gamma_k^*$, but it is of the form $I(z^*)$, where $I\subset \N$ is an interval, and it is easy to see that 
 it satisfies the same estimates \eqref{E:4.13}.   From the definition of $s_0$ and by the second condition on $k_{n_s}$ in \eqref{E:4.3.2} we deduce that
  $$\sqrt{f(k)}< \frac1{\vp_{n_1}} \supp(z_{s-1}) <\sqrt{f(\vp_{n_s}k_{n_s}^{1/2})},$$
  and thus 
  $$k\le \vp_{n_s} k_{n_s}^{1/2} =\vp_{n_s}j_{\pt(n_s)}^{1/2}\le\vp_{n_s} \mt^{1/2}_s(n_s),$$
  which yields 
  $$\frac{k \gt(\mt(n_s))}{\mt(n_s)} \le \vp_{n_s} \frac{\gt(\mt(n_s))}{\mt^{1/2}(n_s)}\le  \vp_{n_s}.$$
  Put  $t_s=\min T_s$ (which takes the role of $t_0$).
   If we apply \eqref{E:4.13} to $x_{n_s}$, the sets $T_0$ and $T_s$ will be replaced by sets
   $\tilde T_0$ and $\tilde T_{\st}$, $\st\le \mt(n_s)$, for which we know (and only will need to know) that
   $$\#\tilde T_0\le k\text{ and }\sum_{\st=1}^{\mt(n_s)}\#\tilde T_{\st}\le k.$$
  Using also the estimate
  $m\vp_{n_1} +\|\xt_{\st_r(n_s)}\| +1\le 3$, we obtain from   \eqref{E:4.13}, applied to $\zt^*(x_{n_s})$, that 
 \begin{align*}
   |\zt^*(x_{n_s})|&\le \frac{|z^*_{t_s}(x_{n_s}|}{\sqrt{f(k)}} +\frac{\gt(\mt)}{\mt}\frac{3k}{\sqrt{f(k)}}+\frac{\gt(\mt)}{\mt}\frac{k}{\sqrt{f(k)}}
                                 \le  \frac{|z^*_{t_s}(x_{n_s})|}{\sqrt{f(k)}} +4\vp_{n_s}.
\end{align*}
Inserting this estimate back  into  \eqref{E:4.13} for $|z^*(y)|$ we get
\begin{align}\label{E:4.14}
  |\zt^*(y)|&\le\frac{|z^*_{t_0}(y)|}{\sqrt{f(k)}} +\frac{\max_{s\le m}|a_s|}{\sqrt{f(k)}}\left[1+ 2m\vp_{n_1}  +\sum_{t\in T_0} \max_{s\in S_t} \|x_{n_s}\|_{l_t}    \right]\\
  & \qquad+\frac1{\sqrt{f(k)}} \sum_{s=1, T_s\not=\emptyset}^m  |a_s|\big[|z^*_{t_s}(x_{n_s})  + 4\vp_{n_s}\big]\notag\\
  &\le\frac{|z^*_{t_0}(y)|}{\sqrt{f(k)}} +\frac{\max_{s\le m}|a_s|}{\sqrt{f(k)}}\left[\sum_{t\in T_0} \max_{s\in S_t} \|x_{n_s}\|_{l_t} + \sum_{s=1, T_s\not=\emptyset}^m \|x_{n_s}\|_{l_{t_s}}
  +2   \right]\notag\\
  &\le \frac{|z^*_{t_0}(y)|}{\sqrt{f(k)}} +\frac{\max_{s\le m}|a_s| }{\sqrt{f(k)}}\min(m,k) \max_{s\le m, t_0<t\le k} \|x_s\|_{l_t} + 2\max_{s\le m}|a_s| ,\notag
\end{align}
which proves our claim (b).
\end{proof}
  
  \begin{lem}\label{L:4.15}
 Assume that $(x_n)$ is an SRIS of constant $c$, $c>0$ and assume $z^*\in \Gamma^*$.
  As before write $z^*$ as
$$z^*=\frac1{\sqrt{f(k)}}\sum_{t=1}^k z^*_t \in \Gamma^*_k,$$
with $z^*_1\in A^*_{l_1}$, and  $l_1=j_{2k'}$, for some $k'\ge k$, and $z^*_{i}\in  A^*_{l_{i}}$, with $l_i=\sigma(z^*_1,z^*_2,\ldots,z^*_{i-1})$, for $i=2,\ldots k$
  and assume that 
$$ t_0=\min\{ t=1,\ldots k:  z^*_t(y)\not=0\},$$
 exists (otherwise  $z^*(y)=0$).
 Let $m$ and $m\le n_1<n_2< \ldots n_m$ be in $\N$, $(a_s)_{s=1}^m\subset \R$ and assume that the numbers $j_{\pt(n_s)}$  (as chosen in Definition \ref{D:4.6})
 are all different from the numbers $l_{t_0+1}$,  $l_{t_0+2}$, ....$l_{k}$.
 
 Then it follows that
 \begin{align}\label{E:4.15.1}
  |z^*_t(x_{n_s})|&\le \vp_{n_1}, \text{ for $t=t_0+1,t_0+2,\ldots k$ and $s=1,2\ldots m$, and }\\
 \label{E:4.15.2} 
   |z^*(y)|&\le \frac{|z^*_{t_0}(y)|}{\sqrt{f(k)}}+ 3\max_{s\le m}|a_s|, \text{ where $y=\sum_{s=1}^m a_s x_{n_s}$}.
 \end{align}
  \end{lem}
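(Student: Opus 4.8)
The plan is to establish the pointwise bound \eqref{E:4.15.1} first, and then obtain \eqref{E:4.15.2} by feeding it into the SRIS estimate of Lemma \ref{L:4.7bis}(b). Throughout I write $l_t=m_t$ for the length parameter of $z^*_t\in A^*_{l_t}$, so that $l_1=j_{2k'}$ and $l_t=\sigma(z^*_1,\ldots,z^*_{t-1})$ for $2\le t\le k$; I also use the trivial estimate $\min\supp(x_{n_1})>2n_1$ for an RIS, as in the proof of Lemma \ref{L:4.7bis}. The first step is to record that $f(l_t)$ is huge for every $t>t_0$. Indeed, since $t\ge t_0+1\ge 2$ we have $l_t=\sigma(z^*_1,\ldots,z^*_{t-1})$, and as $z^*_{t_0}(y)\ne 0$ the interval $\ran(z^*_{t_0})$ meets $\supp(y)$, so with $N:=\max\big(\bigcup_{s'\le t-1}\supp(z^*_{s'})\big)$ we get $N\ge \max\supp(z^*_{t_0})\ge \min\supp(y)\ge \min\supp(x_{n_1})>2n_1$; hence \eqref{E:2.10} gives $f(l_t)\ge N/\vp_N\ge 2n_1/\vp_{n_1}\ge 2/\vp_{n_1}$. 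Since $|z^*_t(x_{n_s})|\le \|x_{n_s}\|_{l_t}$ (because $z^*_t\in A^*_{l_t}$), it suffices to bound $\|x_{n_s}\|_{l_t}$, and by the non--resonance hypothesis we have for each $s$ the dichotomy $l_t<k_{n_s}$ or $l_t>k_{n_s}$.

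If $l_t<k_{n_s}$, then $x_{n_s}$ is an $\ell_1^{k_{n_s}}$-average and Lemma \ref{L:4.2} yields $\|x_{n_s}\|_{l_t}\le \tfrac1{f(l_t)}\big(1+\tfrac{l_t}{k_{n_s}}\big)<\tfrac2{f(l_t)}\le \vp_{n_1}$. If $l_t>k_{n_s}$, I would instead exploit the SRIS description $x_{n_s}=\tfrac{\tilde g(\mt(n_s))}{\mt(n_s)}\sum_{r=1}^{\mt(n_s)}\tilde x_{\st(n_s,r)}$, together with $k_{n_s}=j_{\pt(n_s)}\le \mt(n_s)\le N(j_{\pt(n_s)},\vp_{\pt(n_s)})$. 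Since $l_t,k_{n_s}\in J$ with $l_t>k_{n_s}$, necessarily $l_t\ge j_{\pt(n_s)+1}$, so by \eqref{E:2.9} applied with $s:=\pt(n_s)$ we get $\mt(n_s)\le N(j_{\pt(n_s)},\vp_{\pt(n_s)})\le \tfrac12\vp_{\pt(n_s)+1}f(j_{\pt(n_s)+1})\le \tfrac12\vp_{\pt(n_s)+1}f(l_t)$. Thus Lemma \ref{L:4.7}(b) applies to $\sum_{r}\tilde x_{\st(n_s,r)}$ with $\vp=\vp_{\pt(n_s)+1}$, which lies in the admissible range because $\st(n_s,1)\ge \mt(n_s)\ge j_{\pt(n_s)}\ge \pt(n_s)+1$, and gives $\big\|\sum_r\tilde x_{\st(n_s,r)}\big\|_{l_t}\le 2\vp_{\pt(n_s)+1}+1\le 3$. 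Hence, using $\tilde g(\xi)\le f(\xi)/c$, the monotonicity of $\xi\mapsto f(\xi)/\xi$, and the RIS condition $\tfrac{f(k_{n_s})}{k_{n_s}}<\vp_{n_s}^2$ from \eqref{E:4.3.1},
$$\|x_{n_s}\|_{l_t}\le \frac{3\tilde g(\mt(n_s))}{\mt(n_s)}\le \frac{3f(\mt(n_s))}{c\,\mt(n_s)}\le \frac{3f(k_{n_s})}{c\,k_{n_s}}<\frac{3\vp_{n_s}^2}{c}\le \vp_{n_1},$$
the last inequality following from the fast decay of $(\vp_n)$ in \eqref{E:2.1} when $s\ge 2$, and from $\vp_{n_1}\le c/3$ when $s=1$ (which is guaranteed once $n_1\ge m$ is large enough). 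This proves \eqref{E:4.15.1}, together with the slightly stronger statement $\|x_{n_s}\|_{l_t}\le \vp_{n_1}$ for all $t_0<t\le k$ and all $s$.

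For \eqref{E:4.15.2} I would now simply invoke Lemma \ref{L:4.7bis}(b): since $(x_n)$ is a $c$-SRIS and $t_0$ exists,
$$|z^*(y)|\le \frac{|z^*_{t_0}(y)|}{\sqrt{f(k)}}+\frac{\max_{s\le m}|a_s|}{\sqrt{f(k)}}\min(m,k)\max_{s\le m,\ t_0<t\le k}\|x_{n_s}\|_{l_t}+2\max_{s\le m}|a_s|.$$
By the bound just established the middle term is at most $\tfrac{\max_{s}|a_s|}{\sqrt{f(k)}}\min(m,k)\vp_{n_1}$, and since $\min(m,k)\le m\le n_1$ and $\vp_{n_1}\le 2^{-n_1}$ we have $\min(m,k)\vp_{n_1}\le n_1 2^{-n_1}\le 1\le \sqrt{f(k)}$; thus the middle term is at most $\max_s|a_s|$, giving $|z^*(y)|\le \tfrac{|z^*_{t_0}(y)|}{\sqrt{f(k)}}+3\max_s|a_s|$, as claimed.

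The only genuinely delicate point is the subcase $l_t>k_{n_s}$ of \eqref{E:4.15.1}: it is precisely here that the double description of the $x_n$ built into the notion of an SRIS is indispensable (as $\ell_1^{k_{n_s}}$-averages the bound of Lemma \ref{L:4.2} is useless when $l_t\gg k_{n_s}$, which is forced by lacunarity), and one must thread the growth condition \eqref{E:2.9} between $\mt(n_s)$ and $f(j_{\pt(n_s)+1})$ in order to bring Lemma \ref{L:4.7}(b) to bear. The remaining constant bookkeeping, and the passage from \eqref{E:4.15.1} to \eqref{E:4.15.2}, are routine given the standard conditions \eqref{E:2.1} and the already-proven Lemma \ref{L:4.7bis}.
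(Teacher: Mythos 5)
Your proof is correct and follows essentially the same route as the paper's: the same dichotomy $l_t<k_{n_s}$ versus $l_t>k_{n_s}$, with Lemma \ref{L:4.2} handling the first case and the SRIS decomposition plus \eqref{E:2.9} and Lemma \ref{L:4.7}(b) handling the second, followed by substitution into \eqref{E:50}. The only difference is that you honestly carry the factor $1/c$ from $\tilde g\le f/c$ (which the paper silently drops in this step) and flag the resulting mild constraint on $n_1$; this does not affect the argument.
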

  \begin{remark} The assumption of Lemma \ref{L:4.15} are for example satisfied if in Definition \ref{D:4.6} the numbers $\pt(n)$, $n\kin\N$, are chosen to be  odd numbers
  (since the image of $\sigma$ is a subset of $\{j_{2i}:i\kin\N\}$).  
  \end{remark}
  \begin{proof}[Proof of Lemma \ref{L:4.15}]
  We need  to estimate   $\|x_{n_s}\|_{l_t} $ for $s\in \{1,2\ldots \}$ and $t=t_0+1,t_0+2,\ldots k$ and then apply \eqref{E:50}. Recall that 
    \begin{equation}\label{E:4.15.3}
    x_{n_s}=\frac{\gt(\mt(n_s))}{\mt(n_s)} \sum_{r=1}^{\mt(n_s)} \xt_{\st_r(n_s)},
    \end{equation}
 where $\gt:[1,\infty)\to [1,\infty)$ is an increasing function with $\gt(\xi)\le f(\xi)$, $\xi\ge 1$ and so that
 $k_{n_s}=j_{\pt(n_s)}$,  $(\xt_n)$ is an RIS of constant $c$, and $\mt\in[j_{\pt(n_s)},  N(j_{\pt(n_s)}, \vp_{\pt(n_s) })]$, and 
 $\mt(n_s)\le \st_1(n_s)<\ldots <\st_{\mt}(n_s)$.
 
   We note that either $l_t<k_{n_s}=j_{\pt(n_s)}$, then, since $z_s$  is an  $\ell_1^{j_{\pt(n_s)}}$-average, we deduce from Lemma \ref{L:4.2}, and 
 \eqref{E:2.10} 
 that 
 \begin{equation}\label{E:4.15.4}
  \|x_{n_s} \|_{l_{t}}\le \frac{2}{f(l_{t})} \le\frac2{f( \sigma(z^*_1,z^*_2,\ldots z^*_{t-1}))}\le \vp_{n_1}.
  \end{equation}
 Or   we have that $l_{t}>k_{n_s}=j_{\pt(n_s)}$.   This implies  by \eqref{E:2.9} that
  $$f(l_t)\ge f(j_{\pt(n_s)+1})\ge \frac{2}{\vp_{\pt(n_s)+1}} N(j_{\pt(n_s)}, \vp_{\pt(n_s) })\ge \frac{2\mt(n_s)}{\vp_{\pt(n_s)+1}}\ge \frac{2\mt(n_s)}{\vp_{n_1}}.$$
 But then it follows from \eqref{E:4.15.3},
  Lemma \ref{L:4.7} (b),  and \eqref{E:4.3.1} that
    \begin{equation}
 \|x_{n_s}\|_{l_{t}}  \le 2\frac{\tilde g(\mt(n_s))}{\mt(n_s) }\le 2\frac{{f (\mt(n_s))}}{\mt(n_s) }    \le 2\frac{f (k_{n_s})}{k_{n_s}  }  \le  \vp_{n_1}.
   \end{equation}
Thus, \eqref{E:50} yields
  \begin{align*}
  |\zt^*(x_{n_s})|&\le \frac{|z^*_{t_0}(y)|}{\sqrt{f(k)}} +\frac{\max_{s\le m}|a_s| }{\sqrt{f(k)}}\min(m,k) \vp_{n_1} + 2\max_{s\le m}|a_s| 
  &\le \frac{|z^*_{t_0}(y)|}{\sqrt{f(k)}}+ 3\max_{s\le m}|a_s|,
  \end{align*}
  which proves our claim.
 \end{proof} 
 
We now can formulate  and prove our Key Lemma.

\begin{lem}\label{L:4.16}  For each $c\in(0,1]$ there is a constant $C=C_c>0$ so that the following holds. 

Let $(x_n)$ be a $c$-SRIS, 
and assume that  the $\pt(n)$, $n\in\N$, as in Definition \ref{D:4.6} are chosen to be odd numbers.  
 Let $m\le n_1<n_2<\ldots n_m$ be in $\N$ and put
  $y=\sum_{s=1}^m x_{n_s}$. 
Then
\begin{enumerate}
\item[a)]   $c \frac{m}{f(m)}\le   \|y\|\le  C\frac{m}{f(m)}$, if $c<1$  and 
                    $(1-\vp_{n_1}) \frac{m}{f(m)}\le   \|y\|\le  C\frac{m}{f(m)}$, if $c=1$.
\item[b)]      For $l\in \N$, $l\ge 2$ 
    $$\|y\|_l\le \begin{cases} \frac{C}{f(l)} \frac{m}{f(m/\min(2l,m))}   &\text{ if $f(l)\le m/\vp_{n_1}$,}\\
                                         2 C    &\text{ if $f(l)\ge m/\vp_{n_1}$.}
                    \end{cases}$$
\end{enumerate}
\end{lem}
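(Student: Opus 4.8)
The plan is to obtain the lower bounds in (a) from one explicit functional and the upper bounds by induction on $m$, reducing an arbitrary normalizing functional (up to an interval restriction) to the three generating shapes \eqref{E:2.11}--\eqref{E:2.13}. For the lower bound, since $(x_{n_s})_{s=1}^m$ is a block sequence the intervals $E_s=\ran(x_{n_s})$ are successive, so $\frac1{f(m)}\sum_{s=1}^m E_s^*\in A_m^*({\mathcal G \mathcal M})$, and since $E_s(y)=x_{n_s}$ we get $\|y\|\ge\|y\|_m\ge\frac1{f(m)}\sum_{s=1}^m\|x_{n_s}\|$. If $c<1$ then $\|x_{n_s}\|\ge c$, so $\|y\|\ge cm/f(m)$; if $c=1$ then $\|x_{n_s}\|\ge(1+\vp_{n_s})^{-1}\ge 1-\vp_{n_1}$, so $\|y\|\ge(1-\vp_{n_1})m/f(m)$.

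For the upper bounds I would fix the constant $C=C_c$ (depending only on $c$, through the constants appearing in Lemmas~\ref{L:4.7} and \ref{L:4.15}) at the end, and induct on $m$; note that a subsequence of a $c$-SRIS with odd $\pt$ is again one and that $\#I\le m\le n_1$ for every $I\subset\{1,\dots,m\}$, so the inductive hypothesis applies to all the subsums that occur below. To bound $\|y\|$ it suffices to bound $|z^*(y)|$ for each generating $z^*$, possibly restricted to an interval (Lemmas~\ref{L:4.7} and \ref{L:4.15} apply equally to such restrictions, cf. the proof of Lemma~\ref{L:4.7bis}): the shape \eqref{E:2.11} is a convex combination of lower-generation functionals and is handled by passing to its constituents; the shape \eqref{E:2.12} gives $|z^*(y)|\le\|y\|_l$; the shape \eqref{E:2.13} gives $|z^*(y)|\le\|y\|_{G^*_k}$; and the base case $e_n^*$ gives $|z^*(y)|\le1\le m/f(m)$. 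Thus the whole argument comes down to estimating $\|y\|_l$ (which is also part (b)) and $\|y\|_{G^*_k}$.

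For $\|y\|_l$ with $f(l)\ge m/\vp_{n_1}$, Lemma~\ref{L:4.7}(b) applied with $\vp=2\vp_{n_1}\in(0,1)$ gives $\|y\|_l\le 4\vp_{n_1}+1\le 2$, which is the second clause of (b). For $f(l)<m/\vp_{n_1}$, Lemma~\ref{L:4.7}(a) gives $\|y\|_l\le\frac1{f(l)}\bigl[\sum_{j=1}^{l_1}\|\sum_{s\in I_j}x_{n_s}\|+\sum_{s\in I_0}\tfrac{1+2\vp_{n_s}}c\|x_{n_s}\|\bigr]$, with $l_1+l_2\le\min(2l,m)$, $l_2=\#I_0$, and each $I_j$ a \emph{proper} subinterval of $\{1,\dots,m\}$ (for $l\ge2$ every piece in that decomposition is strictly shorter than $\ran(y)$). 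Applying the inductive hypothesis (part (a) for the subsums $\sum_{s\in I_j}x_{n_s}$), using $\sum_j\#I_j\le m-l_2$, $l_1\le\min(2l,m)-l_2$ and the concavity of $\xi\mapsto\xi/f(\xi)$, one bounds the first sum by $\frac{C(m-l_2)}{f(m/\min(2l,m))}$; a short case analysis (according to whether $m/\min(2l,m)$ is large or bounded, using $f(x)/x\le1$ in the latter case) then shows the diagonal is absorbed once $C$ is a suitable multiple of $1/c$, yielding $\|y\|_l\le\frac C{f(l)}\cdot\frac m{f(m/\min(2l,m))}$ --- the step imitating the recursion defining $\|\cdot\|_S$, in the spirit of Lemmas~\ref{L:1.6} and \ref{L:1.12}. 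For $\|y\|_{G^*_k}$ and $z^*\in\Gamma_k^*$ the hypothesis that the $\pt(n)$ are odd enters: since $\sigma$ takes values in $\{j_{2i}:i\in\N\}$, the weights $l_{t_0+1},\dots,l_k$ are all distinct from the $j_{\pt(n_s)}=k_{n_s}$, so Lemma~\ref{L:4.15} applies and $|z^*(y)|\le\frac{|z^*_{t_0}(y)|}{\sqrt{f(k)}}+3\le\|y\|_{l_{t_0}}+3$, which by the bound just proved is $\le C'm/f(m)+3$. Combining the three estimates and comparing $\frac1{f(l)}\cdot\frac m{f(m/\min(2l,m))}$ with $m/f(m)$ via $f(l)f(m/l)\ge f(m/2)\ge\tfrac12 f(m)$ (a consequence of $f(ab)\le f(a)f(b)$) gives $\|y\|\le Cm/f(m)$, which is (a), the $\|\cdot\|_l$-estimates being (b).

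The main obstacle is the treatment of the special functionals \eqref{E:2.13}: a priori a $z^*\in\Gamma_k^*$ whose coding $\sigma$ has been tuned to the successive weights $j_{\pt(n_s)}$ of the $\ell_1$-averages $x_{n_s}$ could read each of them off with a factor near $1$, producing $|z^*(y)|$ of order $m/\sqrt{f(k)}\gg m/f(m)$; it is precisely the interplay of the lacunarity of $J$, the image of $\sigma$ lying in $\{j_{2i}\}$, and the oddness of the $\pt(n)$ that rules this out, and this is exactly the content of Lemma~\ref{L:4.15} (which is why that lemma, rather than Lemma~\ref{L:4.7bis} alone, is needed here). A secondary, purely computational, difficulty is the choice of $C$: the diagonal $I_0$-terms and the factor-$2$ loss $l_1+l_2\le\min(2l,m)$ from Lemma~\ref{L:4.7}(a) must be shown to be absorbed into the final bound, which is why part (b) is stated with the generous $\min(2l,m)$ rather than $l$ inside $f$; this is routine in the manner of \cite{Sch1} and \cite{GM}, but must be carried out with care.
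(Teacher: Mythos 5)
Your proposal is correct and follows essentially the same route as the paper: the same explicit functional $\frac1{f(m)}\sum_s E_s^*$ for the lower bound, induction on $m$ with Lemma~\ref{L:4.7}(a),(b) to handle the $A_l^*$-functionals via concavity of $\xi\mapsto\xi/f(\xi)$ and submultiplicativity of $f$, and Lemma~\ref{L:4.15} (through the oddness of the $\pt(n)$ and the injectivity of $\sigma$) to dispose of the special functionals in $\Gamma_k^*$. You are somewhat more explicit than the paper about reducing a general normalizing functional to the three generating shapes, but the substance, the key lemmas invoked, and the constant bookkeeping are the same.
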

\begin{remark} Of course we can (and will later) replace $2C$ in the second case of (b) in Lemma \ref{L:4.16} by an another constant. Nevertheless the ``$2C$'' is needed so that 
the induction argument in the proof will work out.
\end{remark}
\begin{proof} The first inequality in (a) follows from the fact that
$$\|y\|\ge \|y\|_m \ge \frac1{f(m)}\sum_{i=1}^m \|x_{n_s}\|\ge c\frac{m}{f(m)},$$
if $c<1$. A similar argument works for $c=1$.

 Using the first condition in \eqref{E:2.1}   it is easy  to see that one can choose  $m_0\in\N$ so that
 \begin{align}\label{E:4.16.1}
  f(m)&\le \frac{f(l) f(m/2l)}{1+\vp_0},\,\, 2m\vp_{m}<c\vp_{m-1} \text{ and }\frac{f(m)}{f(m/4)} \le 2,\\
  &\qquad\qquad\qquad\qquad\text{ whenever $m\ge m_0$ and $2\le l\le m/4$}\notag
  \end{align}
 (note that the second inequality  is satisfied  as long as $c\ge1/m $, by the third condition in \eqref{E:2.1}).
Put $C=4 m_0$. 

We will prove the second inequality  in  (a) and (b)  by induction for each $m\in\N$. If $m\le m_0$ (a) and (b)  are trivial.
 
 So assume (a) and (b) are true for all $m'<m$, for some $m\ge m_0$. Let
  $m\le n_1< n_2< \ldots<n_m$ be in $\N$ and put
  $$y=\sum_{s=1}^m x_{n_s}.$$

 For $l\in \N$, $l\ge 2$, we first estimate $\|y\|_l$.
 
  If $f(l)\ge m/\vp_{n_1}$ then Lemma \ref{L:4.7} (b) implies that 
 $\|y\|_l\le 2\le  C$. 
 
  If $f(l)\le m/\vp_{n_1}$ it follows from  the second part of \eqref{E:4.16.1} and
 Lemma \ref{L:4.7} (a) that there are  natural numbers $0=s_0<s_1<\ldots s_{l'}=m$,
 with $l'=\min(2l,m)$
  so that 
\begin{align*}
 \|y\|_l&\le\frac{ \vp_{n_1-1}}{f(l)}   +\frac1{f(l)}  \sum_{j=1}^{l'}  \Big\|\sum_{s=s_{j-1}+1}^{s_j}  x_{n_s} \Big\|.\\
 \end{align*}
 If $l\ge m/4$ then  by   the third part of \eqref{E:4.16.1} 
 $$\|y\|_l \le\frac{ \vp_{n_1-1}}{f(l)} + \frac{m}{f(l)}\le \frac{ \vp_{n_1-1}}{f(l)}     +\frac{m}{f(m/4)}\le \frac{ \vp_{n_1-1}}{f(l)} +2 \frac{m}{f(m)}\le C\frac{m}{f(m)}.$$
 If $l\le m/4$ we are using the induction hypothesis and the fact that the map $[1,\infty)\ni x\mapsto x/f(x)$ is concave to obtain 
 \begin{align*}
 \|y\|_l&\le\frac{ \vp_{n_1-1}}{f(l)}   +\frac1{f(l)}  \sum_{j=1}^{l'} \Big\| \sum_{s=s_{j-1}+1}^{s_j} x_{n_s}\Big\|\\
& \le\frac{ \vp_{n_1-1}}{f(l)}   +\frac{C}{f(l)}  \sum_{j=1}^{l'}    \frac{s_{j}-s_{j-1}}{f(s_{j}-s_{j-1})}  \\
&\le \frac{ \vp_{n_1-1}}{f(l)}   +\frac{C}{f(l)} l' \frac{m/l'}{f(m/l')}\\
&\le \frac{ \vp_0}{f(2)}   +\frac{C}{1+\vp_0}   \frac{m}{f(m)} \text{ (by first condition in \eqref{E:4.16.1})}\\
&\le C \frac{m}{f(m)} ,
 \end{align*}
for the last inequality note that
$$\frac{\vp_0}{f(2)} \le C\frac{\vp_0}{1+\vp_0} \frac2{f(2)}= C\Big[1-\frac1{1+\vp_0}\Big] \frac{2}{f(2)} \le C\Big[1-\frac1{1+\vp_0}\Big] \frac{m}{f(m)}.$$

This proves that  $\|y\|_l\le Cm/f(m)$, for every $l\ge 2$. Together with Lemma \ref{L:4.15} 
  (which estimates $\|y\|_{\Gamma^*_k}$ for $k\in\N$)
this yields that $\|y\|\le  Cm/f(m)$. That finishes the induction step and the proof of (a).

Part (b) follows  if $f(l)\ge m/\vp_{n_1}$ directly from Lemma \ref{L:4.7} (b). If  $f(l)\le m/\vp_{n_1}$ we apply Lemma \ref{L:4.7} (a),
the concavity of  the map $[1,\infty)\ni x\mapsto x/f(x)$ and part (a) of this lemma, to obtain
 for some choice of  natural numbers $0=s_0<s_1<\ldots s_{l'}=m$,
 with $l'=\min(2l,m)$
  so that 
\begin{align*}
 \|y\|_l&\le\frac{ \vp_{n_1-1}}{f(l)}   +\frac1{f(l)}  \sum_{j=1}^{l'} \Big\|\sum_{s=s_{j-1}+1}^{s_j}  x_{n_s} \Big\|\\
         &  \le\frac{ \vp_{n_1-1}}{f(l)}   +\frac{C}{f(l)}  \sum_{j=1}^{l'}    \frac{s_{j}-s_{j-1}}{f(s_{j}-s_{j-1})}  \\
  &\le \frac{ \vp_{n_1-1}}{f(l)}   +\frac{C}{f(l)} \frac{m}{f(m/l')}\le\frac{2C}{f(l)} \frac{m}{f(m/l') }, \end{align*}
which proves our claim.
\end{proof}
\begin{remark}
Following now the proof in \cite{GM}  (from Lemma 7 in \cite{GM}    on)   one deduces that ${\mathcal G \mathcal M}$, as defined here has also no 
 unconditional basis sequence. In Section \ref{S:6} (see Theorem \ref{T:6.14}) we will prove that in every block subspace of ${\mathcal G \mathcal M}$ there are two   seminormalized block sequence 
$(u_n)$ and $(v_n)$, which are   {\em intertwinned}, i.e. $u_1<v_1<u_2<v_2<\ldots$, with the property that for some constants $0<c,C<\infty$
\begin{align*}
&\Big\|\sum_{s=1}^l u_{n_s} +v_{n_s}\Big\|\ge c \frac{l}{\sqrt{f(l)}} \text{ and }
\Big\|\sum_{s=1}^l u_{n_s} -v_{n_s}\Big\|\le C \frac{l}{f(l)} .
\end{align*}
for all $l\in\N$ and all choices of  $l\le n_1<n_2<\ldots n_l$ in $\N$. This certainly implies that ${\mathcal G \mathcal M}$ has no unconditional block sequence.

 We do not know whether or not ${\mathcal G \mathcal M}$ is HI, but we suspect it is. The point is that to use spreading models and other refinements, we needed to pass to subsequences of the Rapidly Increasing Sequences as defined in $GM$. Therefore we lost the freedom to pick the 
vectors of an RIS-sequence in arbitrary subspaces, as would be needed to repeat Gowers-Maurey's proof that $GM$ is HI.

 Nevertheless, Gowers' first dichotomy  yields  that
${\mathcal G \mathcal M}$ contains at least an infinite dimensional block subspace which is HI. 
\end{remark}

\section{Yardstick Vectors in ${\mathcal G \mathcal M}$}\label{S:5}
We will prove that every block basis
  in ${\mathcal G \mathcal M}$ has a further block basis whose spreading model is equivalent to the unit vector basis of $S$. Thus, we can define 
  in ${\mathcal G \mathcal M}$ the yardsticks as introduced in Section \ref{S:1}.
The following observation follows from Lemmas \ref{L:4.7} and \ref{L:4.16}, and an argument in \cite{AS2}.
 
   \begin{prop}\label{P:5.1}  Assume that $(x_n)$ is a $c$-RIS in ${\mathcal G \mathcal M}$,   for which the following condition 
   is satisfied:
   \begin{align}\label{E:5.1.1} &\text{There exists a constant $C'\ge1$, so that for all $m,k\in\N$,  $m\kleq n_1\kle n_2\kle\ldots n_m$}\\
   &\text{ in $\N$,
   all $(a_i)_{i=1}^m$ and all $z^*\in\Gamma^*_k$, it follows that}\notag\\
    &\qquad \Big|z^*\Big(\sum_{s=1}^m a_s x_{n_s}\Big)\Big|\le   \frac1{\sqrt{f(k)}} \max_{j\in J} \Big\|\sum_{s=1}^m a_s x_{n_s}\Big\|_j+ C'\max_{s\le m} |a_s|.\notag
   \end{align}
   Then the  spreading model of  $(x_n)$  is equivalent to the unit vector basis of $S$.
  More precisely there is a constant  $C$ so that for every $c$-SRIS $(x_n)$  in ${\mathcal G \mathcal M}$
  \begin{equation}\label{E:5.1.2}
   c'\Big\|\sum_{s=1}^m a_s e_s\Big\|_S\le   \Big\|\sum_{s=1}^m a_s x_{n_s}\Big\|_{{\mathcal G \mathcal M}}\le  C\Big\|\sum_{s=1}^m a_s e_s\Big\|_S,
  \end{equation}
  whenever $m\le n_1<n_2<\ldots n_{m}$ are in $\N$ and $(a_s)_{s=1}^m\subset \R$, where
$c'=c$ if $c<1$ and $c'=1-\vp_{n_1}$ if $c=1$.
  \end{prop}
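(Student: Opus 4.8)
The plan is to establish the two inequalities of \eqref{E:5.1.2} directly and then read off the spreading-model assertion by letting $n_1<\dots<n_m\to\infty$. The \emph{lower} estimate needs only that $\|x_{n_s}\|\ge c$ (resp.\ $\ge 1/(1+\vp_{n_s})$ when $c=1$), and follows from a tree-mimicking argument: given $\vp>0$, choose a functional $\phi=\sum_s b_se_s^*$ in the norming set of $S$ — i.e.\ one built from $\{\pm e_n^*\}$ by iterating the operation $(\theta_i)_{i=1}^l\mapsto\frac1{f(l)}\sum_i\theta_i$ on successive blocks — with $\phi\bigl(\sum_s a_se_s\bigr)\ge\|\sum_s a_se_s\|_S-\vp$, where by $1$-unconditionality of the basis of $S$ we may assume $b_sa_s\ge0$ for every $s$. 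For each $s$ pick $\phi_s\in\bigcup_m{\mathcal G \mathcal M}^*_m$ supported in $\ran(x_{n_s})$ with $\phi_s(x_{n_s})\ge c-\vp$ (resp.\ $\ge 1/(1+\vp_{n_s})$), and let $\psi$ result from running the \emph{same} tree on $(\operatorname{sign}(b_s)\phi_s)_s$ in place of $(e_s^*)_s$. Then $\psi\in\bigcup_m{\mathcal G \mathcal M}^*_m$ — only operation \eqref{E:2.12} occurs and the $x_{n_s}$ being successive keeps the children successive at every node — and since $\phi_s$ annihilates $x_{n_{s'}}$ for $s'\ne s$ while the tree-weight of the leaf $s$ is $|b_s|$, one gets $\psi\bigl(\sum_s a_sx_{n_s}\bigr)=\sum_s b_sa_s\,\phi_s(x_{n_s})\ge(c-\vp)\sum_s b_sa_s\ge(c-\vp)\bigl(\|\sum_s a_se_s\|_S-\vp\bigr)$; letting $\vp\to0$ yields the lower inequality with $c'=c$ (resp.\ $c'=1-\vp_{n_1}$, using $\vp_{n_s}\le\vp_{n_1}$).

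For the \emph{upper} estimate put $y=\sum_{s=1}^m a_sx_{n_s}$. From the recursive description of ${\mathcal G \mathcal M}^*$ — convexity for functionals of type \eqref{E:2.11}, the definition of $\|\cdot\|_l$ for those of type \eqref{E:2.12}, and the robustness of the $\Gamma^*_k$-estimates under interval restriction (as in the proof of Lemma \ref{L:4.7bis}) for those of type \eqref{E:2.13} — one obtains
\[
\|y\|_{{\mathcal G \mathcal M}}\ \le\ \max\Bigl(\|y\|_\infty,\ \sup_{l\ge2}\|y\|_l,\ \sup_k\sup_{z^*\in\Gamma^*_k}|z^*(y)|\Bigr).
\]
Here $\|y\|_\infty\le\max_s|a_s|\le\|\sum_s a_se_s\|_S$, and for $z^*\in\Gamma^*_k$ hypothesis \eqref{E:5.1.1} together with $f(k)\ge1$ gives $|z^*(y)|\le\max_{j\in J}\|y\|_j+C'\max_s|a_s|\le\sup_{l\ge2}\|y\|_l+C'\|\sum_s a_se_s\|_S$. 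So the whole matter comes down to bounding $\sup_{l\ge2}\|y\|_l$ by a fixed multiple of $\|\sum_s a_se_s\|_S$.

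I would do this by induction on $m$, following the scheme of the proof of Lemma \ref{L:4.16}. If $f(l)\ge m/\vp_{n_1}$ then Lemma \ref{L:4.7}(b) gives $\|y\|_l\le 3\max_s|a_s|$. If $f(l)<m/\vp_{n_1}$, Lemma \ref{L:4.7}(a) splits $\{1,\dots,m\}$ into successive intervals $I_1<\dots<I_{l_1}$ and the singletons of $I_0$, with $l_1+\#I_0\le\min(2l,m)$, so that
\[
\|y\|_l\ \le\ \frac1{f(l)}\Bigl[\sum_{j=1}^{l_1}\bigl\|\textstyle\sum_{s\in I_j}a_sx_{n_s}\bigr\|+\frac2c\sum_{s\in I_0}|a_s|\Bigr].
\]
One applies the inductive hypothesis to each $\sum_{s\in I_j}a_sx_{n_s}$ (of length $<m$ unless the split is trivial; each $(x_{n_s})_{s\in I_j}$ is again a $c$-RIS satisfying \eqref{E:5.1.1}, the relevant properties being hereditary), collapses the resulting sum of $S$-norms of at most $\min(2l,m)$ successive sets into $\|\sum_s a_se_s\|_S$ via the implicit equation \eqref{E:1.2}, and absorbs the loss $f(\min(2l,m))/f(l)\le f(2)$ and the factor $\tfrac2c$ by taking the constant large — exactly the way the slack $f(2)>1+\vp_0$ of \eqref{E:2.1}, the concavity of $\xi\mapsto\xi/f(\xi)$, and the submultiplicativity $f(ab)\le f(a)f(b)$ are exploited in \eqref{E:4.16.1}; this bootstrapping of the single-coefficient estimate of Lemma \ref{L:4.16} to arbitrary coefficients is carried out along the lines of \cite{AS2}. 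The trivial-split case ($l_1=1$, $I_0=\emptyset$) needs no induction: there $\|y\|_l\le\tfrac1{f(2)}\|y\|_{{\mathcal G \mathcal M}}$, whence the displayed three-term bound already gives $\|y\|_{{\mathcal G \mathcal M}}\le\tfrac{f(2)}{f(2)-1}(C'+1)\|\sum_s a_se_s\|_S$. This proves the upper inequality.

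Finally, a $c$-SRIS whose numbers $\tilde p(n)$ are chosen odd (as one may, by the Remark following Lemma \ref{L:4.15}) satisfies \eqref{E:5.1.1} with $C'=3$: this is Lemma \ref{L:4.15}, whose separation hypothesis is automatic here because every weight $l_t$ entering a functional of $\Gamma^*_k$ is a $j_i$ of even index while each $j_{\tilde p(n_s)}$ has odd index, and because $l_{t_0}\in J$ forces $|z^*_{t_0}(y)|\le\max_{j\in J}\|y\|_j$. Thus the upper estimate holds with a single constant $C$ for all such SRIS, which, combined with the lower estimate, is \eqref{E:5.1.2}; and letting $n_1<\dots<n_m\to\infty$ then identifies the spreading model of $(x_n)$, up to the constants $c'$ and $C$, with the unit vector basis of $S$. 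The one genuinely delicate step is closing the induction for $\sup_{l\ge2}\|y\|_l$ with a constant independent of $m$ and of the coefficients: the comparison with the $S$-norm inevitably costs a bounded but $>1$ factor $f(\min(2l,m))/f(l)$ at every stage, so one must — just as in Lemma \ref{L:4.16} — carry a sharper, $l$-dependent majorant for $\|y\|_l$ and spend the $\vp_0$-gap, this time against the genuine $S$-norm rather than against $m/f(m)$; this is precisely the point where the argument of \cite{AS2} is used.
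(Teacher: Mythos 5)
Your overall architecture matches the paper's: the lower bound by reproducing an $S$-norming tree inside $\bigcup_m{\mathcal G\mathcal M}^*_m$ via operation \eqref{E:2.12} (the paper dismisses this as "clear", and your tree-mimicking argument is a correct fleshing-out); the upper bound by induction on $m$, splitting $\|y\|_l$ with Lemma \ref{L:4.7} and disposing of $\Gamma^*_k$-functionals with hypothesis \eqref{E:5.1.1}; and the observation that Lemma \ref{L:4.15} supplies \eqref{E:5.1.1} for an SRIS with odd $\pt(n)$. However, there is a genuine gap exactly at the step you yourself flag as "the one genuinely delicate step": you do not actually close the induction. Comparing the split $\frac1{f(l)}\sum_{j}\|\sum_{s\in I_j}a_sx_{n_s}\|$ against the implicit equation \eqref{E:1.2} of $\|\cdot\|_S$ costs a factor $f(\min(2l,m))/f(l)>1$ at \emph{every} level of the recursion, and "taking the constant large" cannot absorb a loss incurred at every level; the accumulated factor is unbounded in $m$. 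Your proposed repair --- carry an $l$-dependent majorant and spend the $\vp_0$-gap as in Lemma \ref{L:4.16} --- does not transfer: the argument there rests on the concavity of $\xi\mapsto\xi/f(\xi)$ applied to the \emph{constant-coefficient} quantity $m/f(m)$, and there is no analogous concavity inequality for $\|\sum_{s\in I}a_se_s\|_S$ with arbitrary coefficients. A second, related leak is your error term $\frac1{f(l)}\cdot\frac2c\sum_{s\in I_0}|a_s|$: bounding it by a constant multiple of $\|\sum_sa_se_s\|_S$ again adds a fixed amount at every level, which also breaks the induction unless the errors carry small weights.

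The device the paper actually uses (imported from [AS2, Lemma 3.3]) is to replace $\|\cdot\|_S$ on the right-hand side by the equivalent norm $\nl\cdot\nr$ defined implicitly with $\frac1{f(l/2)}\sum_{j\le l}\nl E_j(x)\nr$ for $l\ge3$: since the splitting produces at most $l'=\min(2l,m)$ intervals and $f(l'/2)\le f(l)$, the doubling of the interval count is absorbed \emph{exactly}, with no multiplicative loss at any level. The singleton errors are kept with their $\vp_{n_{s_t}}$ weights from Lemma \ref{L:4.7}(a) and accumulated into the telescoping factor $\bigl(1+\frac2c\sum_{s\le m}\vp_{n_s}\bigr)$, which stays bounded by \eqref{E:2.1}; and the induction statement is carried for the full ${\mathcal G\mathcal M}$-norm (the paper's \eqref{E:5.1.3}), with a case distinction according to whether $C''\max_s|a_s|$ dominates $\max_{l\ge2}\|y\|_l$, rather than for $\sup_{l\ge2}\|y\|_l$ alone, since Lemma \ref{L:4.7}(a) produces full norms of the sub-blocks. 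You correctly localized the difficulty and pointed to the right reference, but the substitution of the $f(l/2)$-norm is the essential idea of the proof, and it is missing from your argument.
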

  \begin{remark} Note that Lemma \ref{L:4.15} and the remark thereafter establishes a case in which the assumption  \eqref{E:5.1.1} is satisfied.
  \end{remark}
  \begin{proof}[Proof of Proposition \ref{P:5.1}]  Consider the norm $\nl\cdot\nr$ on $\coo$ given by the implicit equation 
  $$\nl x\nr= \max\Big(\| x\|_\infty,\max_{\begin{matrix}\scriptstyle l\in\N, l\ge 3 \\ \scriptstyle E_1<E_2<\ldots E_l\end{matrix}} \frac1{f(l/2)}\sum_{j=1}^l \nl E_i(x)\nr \Big),
  \text{ $x\in \coo$}$$
 and  
  recall \cite[Lemma 3.3]{AS2} which states that   $\nl\cdot\nr$  is an equivalent norm on $S$.
  
  We put 
  $$C''=C'\frac{\sqrt{f(2)}}{\sqrt{f(2)}-1}.$$
  
  By induction we will show for each $m\in\N$  and all choices of $(a_s)_{s=1}^m\subset \R$ and $m\le n_1<n_2<\ldots n_m$ in $\N$, that 
  \begin{equation}\label{E:5.1.3}
  c'\Big\|\sum_{s=1}^m a_s e_s\Big\|_S\le   \Big\|\sum_{s=1}^m a_s x_{n_s}\Big\|_{{\mathcal G \mathcal M}}\le C''  \Bnl \sum_{s=1}^m a_s e_s\Bnr\Big(1+\frac2c\sum_{s=1}^m  \vp_{n_s}\Big).
  \end{equation}  
  This will, together with the above cited result from \cite{AS2}, prove our claim. 
  The first inequality in \eqref{E:5.1.3} is clear, and it is also clear that   \eqref{E:5.1.3} holds for $m=1$. So assume that 
    \eqref{E:5.1.3}  holds for  all $m'<m$,  $m\ge 2$,  $m'\le n_1<n_2<\ldots n_{m'}$ in $\N$, and $(a_s)_{s=1}^{m'}\subset \R$. 
  Let $m\le n_1<n_2<\ldots n_m$, $(a_s)_{s=1}^m\subset \R$ and put $y=\sum_{s=1}^m a_s x_{n_s}$. 
  We distinguish between two cases: If 
  $$C''\max_{s\le m}|a_s|\ge \max_{l\in\N, l\ge 2} \|y\|_l,$$
  then we note that  for all $l\in\N$, $l\ge 2$, 
  $$\|y\|_l\le C''\max|a_s|\le C''\Bnl \sum_{s=1}^m a_s e_s \Bnr,$$
  and, thus,  for any $k\in \N$, $k\ge 2$, and $z^*\in\Gamma_k^*$, it follows from our assumption \eqref{E:5.1.1} that
  $$|z^*(y)|\le \frac1{\sqrt{f(2)}}\max_{j\in J}\|y\|_j+C'\max_{s\le m}|a_s|\le \Big[\frac{C''}{\sqrt{f(2)}} + C'\Big] \max_{s\le m}|a_s|=C'' \max_{s\le m}|a_s|.$$
 If  
   $$C''\max_{s\le m}|a_s|< \max_{l\in\N, l\ge 2} \|y\|_l,$$
  we proceed as follows.
  
   If $l\in\N$, with $f(l)\ge 2m/\vp_{n_1}$, then Lemma \ref{L:4.7} (b) implies that 
   $$\|y\|_l\le 2 \max_{i\le m} |a_s|   \le 2 \Bnl \sum_{i=1}^m a_i e_i  \Bnr  \le C'' \Bnl \sum_{i=1}^m a_i e_i  \Bnr.$$
   If  $l\ge 2$  and $f(l)\le 2m/\vp_{n_1}$, then Lemma \ref{L:4.7} (a)  yields for some choice of $0=s_0<s_1<s_2<\ldots s_{l'}$ with $l'=\min(m,2l)$, that
   \begin{align}\label{E:5.1.4}
   \|y\|_l&\le \frac1{f(l)}\Bigg[\sum_{t=1}^{l'}\Big\|\sum_{s=s_{t-1}+1}^{s_t} a_s x_{n_s}\Big\|+\frac2c\sum_{t\le l', s_t=1+s_{t-1}} \vp_{n_{s_t}} |a_{s_t}|\Bigg]\\
  &\le \frac{C''}{f(l)}\Bigg[\sum_{t=1}^{l'}  \Bnl\sum_{s=s_{t-1}+1}^{s_t} a_s e_{s}\Bnr \Big(1+\frac2c\sum_{s=1}^m\vp_{n_s} \Big)\Bigg]\notag\\
    &\text{(By applying the induction hypothesis in cases that $s_t\ge 2+s_{t-1}$)} \notag\\
    &\le  C''\Bnl \sum_{s=1}^m a_s e_s \Bnr\Big(1+\frac2c\sum_{s=1}^m\vp_{n_s} \Big).\notag
   \end{align}
Our assumption \eqref{E:5.1.1}  yields for $k\in\N$, $k\ge 2$ and $z^*\in \Gamma_k^*$, that
  $$|z^*(y)|\le \frac1{\sqrt{f(k)}} \max_{j\in\N}\|y\|_j  +C'\max_{s\le m}|a_s|\le \Bigg[\frac1{\sqrt{f(2)}}  + \frac{C'}{C''}\Bigg] \max_{j\in\N}\|y\|_j =
   \max_{j\in\N}\|y\|_j,$$
   which together with \eqref{E:5.1.4}, finishes the proof of our induction step.
    \end{proof}
 Lemma \ref{L:2.8} and Proposition \ref{P:5.1} imply  therefore
\begin{cor}\label{C:5.2} There is a constant $D$,  so that for every asymptotically isometric SRIS  $\xb=(x_n)$  in ${\mathcal G \mathcal M}$, for which $\pt(n)$ is odd for all $n\in\N$, and every
    $l\in\N$ and any $s_1<s_2<\ldots s_l$ in   $\N$
  we have
 \begin{equation}\label{E:5.2.1} 
  \frac12 \le \|y_{\xb'} (j_{s_1}, j_{s_2},\ldots j_{s_l})\|_{j_{s_i}}\le \|y_{\xb'} (j_{s_1}, j_{s_2},\ldots j_{s_l})\|\le \frac{D}{2}, \text{ for all $i=1,2,\ldots l$}
\end{equation}
where $\xb'$ is a far enough out starting tail subsequence of $\xb$.
\end{cor}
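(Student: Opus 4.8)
The plan is to deduce the corollary from the two ingredients it is advertised to follow from, together with one elementary computation for the lower bound. Since $\xb$ is an asymptotically isometric SRIS (hence a $1$-SRIS) and the $\pt(n)$ are odd, Lemma \ref{L:4.15} and the remark after it show that $\xb$ satisfies hypothesis \eqref{E:5.1.1}, so Proposition \ref{P:5.1} applies: with the universal constant $C$ of that proposition, $\|\sum_{s=1}^{m}a_sx_{n_s}\|\le C\|\sum_{s=1}^{m}a_se_s\|_S$ whenever $m\le n_1<\cdots<n_m$. By \eqref{E:2.6} the set $J$, and every admissible subsequence of it, meets (a mild strengthening of) the growth requirements \eqref{E:1.13.1}--\eqref{E:1.13.2}, so Lemma \ref{L:1.13} and the remark after it are available for $J$. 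The middle inequality of \eqref{E:5.2.1} is immediate, since $A^*_{j_{s_i}}({\mathcal G \mathcal M})\subset B_{{\mathcal G \mathcal M}^*}$ gives $\|\cdot\|_{j_{s_i}}\le\|\cdot\|$.

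For the upper bound I would fix $l$ and $s_1<\cdots<s_l$, put $M=\sum_{r=1}^{l}j_{s_r}$, and note that $(j_{s_r})_r$ is admissible and $\supp y(j_{s_1},\ldots,j_{s_l})=[1,M]$; write $y(j_{s_1},\ldots,j_{s_l})=\sum_{p=1}^{M}a_pe_p$. Taking $\xb'=(x_n)_{n\ge n_0}$ with $n_0\ge M$ --- this is the meaning of ``far enough out starting tail'' --- one has $y_{\xb'}(j_{s_1},\ldots,j_{s_l})=\sum_{p=1}^{M}a_px_{n_0+p-1}$ with $M\le n_0$, so \eqref{E:5.1.2} gives $\|y_{\xb'}(j_{s_1},\ldots,j_{s_l})\|\le C\|y(j_{s_1},\ldots,j_{s_l})\|_S$. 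Lemma \ref{L:1.13} (via \eqref{E:1.13.3} and the subsequence form \eqref{E:1.14}) then bounds $\|y(j_{s_1},\ldots,j_{s_l})\|_S$ by a universal $D_0\ge1$, and $D=2CD_0$ does the job. Note the same $\xb'$ serves all $i$, since the upper bound does not involve $i$.

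For the lower bound I would argue directly, since Proposition \ref{P:5.1} compares only full norms. The key point is a combinatorial feature of yardsticks read off from Definition \ref{D:1.11}: for each $i$ exactly $j_{s_i}$ of the coefficients $a_p$ equal $f(j_{s_i})/j_{s_i}$ --- namely those created at the $i$-th ``filling'' step, which the later spreadings only relocate. Writing $\xb'=(w_p)_p$ and letting $p_1<\cdots<p_{j_{s_i}}$ enumerate the corresponding positions, set $E_t=\ran(w_{p_t})$; these are $j_{s_i}$ successive intervals and, since $(w_p)$ is a block sequence, $E_t\big(y_{\xb'}(j_{s_1},\ldots,j_{s_l})\big)=\frac{f(j_{s_i})}{j_{s_i}}w_{p_t}$. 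Because $\xb$ is asymptotically isometric, each $w_{p_t}$ is an $\ell_1^{+k}$-average of constant $\ge1/(1+\vp_1)>\frac12$, so $\|w_{p_t}\|>\frac12$, and hence $\|y_{\xb'}(j_{s_1},\ldots,j_{s_l})\|_{j_{s_i}}\ge \frac1{f(j_{s_i})}\cdot j_{s_i}\cdot\frac{f(j_{s_i})}{j_{s_i}}\cdot\frac12=\frac12$. Lemma \ref{L:2.8} enters only in the background, namely in guaranteeing that asymptotically isometric SRIS exist in every block subspace and that Proposition \ref{P:5.1} is applicable.

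The step I expect to require the most care is keeping $D$ genuinely universal, i.e.\ independent of $l$, of the choice $s_1<\cdots<s_l$, and of the particular SRIS $\xb$. One has to be sure that passing to a tail of length at least $M=\sum_r j_{s_r}$ is exactly what the restriction $m\le n_1$ in \eqref{E:5.1.2} asks for; and, more delicately, that the bound $\|y(j_{s_1},\ldots,j_{s_l})\|_S\le D_0$ from Lemma \ref{L:1.13} is uniform over \emph{all} admissible subsequences $(j_{s_r})_r$ of $J$ rather than only far-out ones --- which is where one checks, using $s_r\ge r$ (so $\vp_{s_r}\le\vp_r$ and $j_{s_{r-1}}\le j_{s_r-1}$), that the reindexed subsequence still obeys \eqref{E:1.13.1}--\eqref{E:1.13.2} up to bounded factors, or simply arranges in \eqref{E:2.6} that $J$ satisfies those conditions with enough slack.
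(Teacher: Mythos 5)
Your proposal is correct and supplies exactly the argument the paper leaves implicit: the upper bound via Proposition \ref{P:5.1} (whose hypothesis \eqref{E:5.1.1} holds by Lemma \ref{L:4.15} since the $\pt(n)$ are odd) combined with the uniform bound on yardstick vectors in $S$ from Lemma \ref{L:1.13} and \eqref{E:2.6}, and the lower bound by testing $\|\cdot\|_{j_{s_i}}$ against the $j_{s_i}$ successive ranges carrying the coefficient $f(j_{s_i})/j_{s_i}$. Your two flagged points of care (taking the tail long enough that $M\le n_1$, and checking that Lemma \ref{L:1.13} applies uniformly to arbitrary admissible subsequences of $J$) are indeed the only delicate spots, and you resolve them correctly.
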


 \section{Construction of two equivalent intertwined sequences}\label{S:6}
 
 We now want to construct in a given block subspace  $Y$ of ${\mathcal G \mathcal M}$ two seminormalized block sequences $(u_n)$ and $(v_n)$,
 which are equivalent and so that $u_1<v_1<u_2<\ldots $.
   
Let $\xb=(x_i)$ be any asymptotically isometric SRIS in $Y$, so that $\pt(n)$ is odd for $n\in\N$.
Using Proposition  \ref{P:5.1} and the remark thereafter, it follows that   the spreading model of $\xb$ is equivalent
to the unit vector basis of $S$, and, since   Corollary \ref{C:5.2} applies we let $D<\infty$ be chosen so that  \eqref{E:5.2.1} holds true.

By induction we choose a  block sequence $(z_n)$  of $\xb$.  The vectors $u_n$ and $v_n$ will then be chosen
 so that $u_n<v_n$ and $z_n=u_n+v_n$.
 
 For $n=1$ we first choose $k'_1$, so that  $f(k'_1)/ k'_1<\vp_1^2$ (which means that $k'_1$ satisfies condition  \eqref{E:4.3.1}  for $n=1$), and then 
 let  
 $$z_1=\frac1D y_{\xb^{(1)}}(j_{2q_1(1)})=\frac1D \frac{f(j_{2q_1(1)})}{j_{2q_1(1)}}\sum_{t=1}^{j_{2q_1(1)}} \xb^{(1)}_t,$$
 where $q_1(1)\in\N$ is chosen large enough so that $y(j_{2q_1(1)})$ is an $\ell_1^{k_1}$-average of constant $1-\vp_1$, with $k_1\ge k'_1$
 (using Lemma \ref{L:1.13}),
 $\xb^{(1)}$ is a  tail subsequence of $\xb$, which starts   far enough  out so that $\|z_1\|\le 1$ and so that $z_1$ is  an $\ell_1^{k_1}$-average of constant
 $\frac1D$ (using Proposition \ref{P:5.1} and  Corollary \ref{C:5.2}).
 Finally we choose 
 $$u_1=\frac1D\frac{f(j_{2q_1(1)})}{j_{2q_1(1)}}\sum_{t=1}^{j_{2q_1(1)}/2} \xb^{(1)}_t \text{ and }
 v_1=\frac1D\frac{f(j_{2q_1(1)})}{j_{2q_1(1)}}\sum_{t=1+(j_{2q_1(1)}/2) }^{j_{2q_1(1)}} \xb^{(1)}_t ,$$
 (recall that the elements of $J$ are even).
 
 Assume now that for some $n\ge 2$, we have chosen $z_1<z_2< \ldots z_{n-1}$ in $B_{{\mathcal G \mathcal M}}$, and assume that  the following conditions are satisfied:
  
 -  for  each $i<n$, $z_i$ is an $\ell_1^{k_i}$-average of constant $1/D$,  so that
  \begin{equation}
  f(k_i)/k_i <\vp_i^2\text{ and } f(\vp_i\sqrt{k_i})>\frac{1}{\vp^2_i}\max\supp(z_{i-1}), \text{if $i\ge 2$}
\end{equation}\label{E:6.1}
 (in other words  $z_1<z_2<\ldots z_{n-1}$ satisfies the condition \eqref{E:4.3.1}  of the first $n-1$ elements of an RIS).

- secondly $z_i$ is of the form 
\begin{equation}\label{E:6.2}
z_i= u_i+v_i=\frac1D y_{\xb^{(i)}} \big(j_{2q_i(1)},j_{2q_i(2)},\ldots j_{2q_i(l_i)}\big),
\end{equation}
where $l_i$, and $q_i(1)<q_i(2)<\ldots <q_i(l_i)$  are in $\N$ and $\xb^{(i)}$ is a tail subsequence of $\xb$, starting far enough  out to ensure that $(z_i)_{i=1}^{n-1}$ is a block sequence
and is in $B_{{\mathcal G \mathcal M}}$ (using Corollary \ref{C:5.2}).  By Definition \ref{D:1.11}  of the yardstick vectors in Section \ref{S:1} 
we can write $z_i$ as
$z_i=\sum_{r=1}^{l_i} z(i,r)$ where
 the $z(i,r)$, $r=1,2\ldots l_i$, have pairwise disjoint support and 
so that for each $r\le l_i$ the vector
$z(i,r)$ is of the form
\begin{equation}\label{E:6.3}
z(i,r)=\frac1D\frac{f(j_{2q_i(r_t)})}{j_{2q_i(r_t)}}\sum_{s=1}^{j_{2q_i(r)}} x(i,r,s).
\end{equation}
where $x(i,r,q)$, $q=1, 2,\ldots j_{2q_i(r)}$  are elements  of the sequence $\xb^{(i)}$, and we have 
\begin{align}\label{E:6.4}
u_i&=\frac1D\sum_{r=1}^{l_i} \frac{f(j_{2q_i(r)})}{j_{2q_i(r)}}\sum_{s=1}^{j_{2q_i(r)}/2} x(i,r,s)\text{ and }\\
v_i&=\frac1D\sum_{r=1}^{l_i} \frac{f(j_{2q_i(r)})}{j_{2q_i(r)}}\sum_{s=1+(j_{2q_i(r)}/2)}^{j_{2q_i(r)}} x(i,r,s).
\end{align}

- moreover we assume that so far the following condition is satisfied:

For each sequence $\ib=(i_t: t=1,2\ldots ,l)\subset \{1,2,\ldots, n-1\}$, with $1\le i_1<i_2<\ldots i_l\le n-1$, and for each
$\rhob=(\rho_t: t=1,2\ldots ,l)\in\{ -1,1\}^l$ there is a sequence of functionals  $\zb^*(\ib,\rhob)=(z^*_t:t=1,2,\ldots l)= (z^*_{(\ib,\rhob)}(t): t=1,2\ldots, l)$ in $B_{{\mathcal G \mathcal M}^*}$ so that for
 all $t=1,2\ldots, l-1$:
\begin{align}\label{E:6.5}
&(a)\quad\text{$\textstyle\supp(z^*_t)\subset \bigcup_{s=1}^{j_{2q_i(r_t)})} \ran(x(i_t,r_t,s))$, for some $r_t\kin \{1,2\ldots l_{i_t}\}$,}\qquad\\
&(b)\quad z^*_t\kin A^*_{j_{2q_i(r_t)}}\cap{\bf Q}, \notag\\ 
&(c)\quad\textstyle z^*_t(z(i_t,r_t)) \rho_t\ge \frac1{2D}, z^*_t(u(i_t))=z^*_t(v(i_t))=\frac12 z^*_t(z(i_t,r_t)), \text{ and,}\quad\qquad\qquad \notag \\
&(d)\quad\text{if $t\kge 2$, then }2j_{2q_i(r_t)}\!=\!\sigma\big(\rho_1z^*_1,\rho_2 z^*_2,\ldots,\rho_{t-1} z^*_{t-1}\big). \notag
\end{align}

 In order to choose $z_n$ we proceed  as follows. We first choose $k'_n\in J$ so that   $f(k'_n)/k'_n<\vp_n^2$ and
  $f(\vp_n\sqrt{k'_n})\ge \frac1{\vp_n^2} \max\supp(z_{n-1})$.  
  Assume that $q_n\in\N$ satisfies the following properties:
  \begin{align}\label{E:6.6}  j_{q_n}\ge k'_n \text{ and } \sqrt{f(j_{q_n}\vp_n)}>\frac{\max\supp(z_{n-1})}{\vp_n}.
  \end{align}

  For each increasing  sequence  $\ib=(i_t: t=1,2\ldots ,l)\subset \{1,2,\ldots n-1\}$, 
  and each $\rhob=(\rho_t: t=1,2\ldots ,l)\subset\{\pm1\}$
  we can assume  that 
  \begin{equation}\label{E:6.7}
  \sigma\big( z^*_{(\ib,\rhob)}(1), z^*_{(\ib,\rhob)}(2),\ldots,z^*_{(\ib,\rhob)}(l)\big)\ge j_{q_n+1},
  \end{equation}
  for any  $\ib=(i_t: t=1,2\ldots ,l)\subset \{1,2,\ldots n-1\}$, with $1\le i_1<i_2<\ldots i_l\le n-1$, and for each
$\rhob=(\rho_t: t=1,2\ldots ,l)\subset\{\pm1\}$. Note that this can be accomplished by only perturbing the last element $z^*_{(\ib,\rhob)}(l)$,
and thus still satifying condition \eqref{E:6.5} (c) (and all the other conditions of  \eqref{E:6.5}).
 Then we consider the set
 $$\Sigma_n=\left\{\sigma\big( z^*_{(\ib,\rhob)}(1), z^*_{(\ib,\rhob)}(2),\ldots,z^*_{(\ib,\rhob)}(l)\big):\begin{matrix} 
 \ib=(i_t: t\kleq l)\subset \{1,\ldots n-1\}\text{ increasing}\\
 \rhob=(\rho_t: t=1,2\ldots ,l)\subset\{\pm1\}
 \end{matrix}\right\}$$
and order it into 
\begin{equation}\label{E:6.7a}
j_{2q_n(1)} < j_{2q_n(2)}<\ldots <j_{2q_n(l_n)}.
\end{equation}
We then choose a tail subsequence  $\xb^{(n)}$ of $\xb$  whose first element starts after $z_{n-1}$ and so that its first 
$\sum_{r=1}^{l_n}  j_{2q_n(r)}$  elements are $(1+\vp_n)C$ ($C$ as in Proposition \ref{P:5.1}) equivalent to the
first $\sum_{r=1}^{l_n}  j_{2q_n(2)}$ elements of $S$, and then put
\begin{equation}\label{E:6.7b}
z_n= \frac1D y_{\xb}(j_{2q_n(1)} ,  j_{2q_n(2)}, \ldots, j_{2q_n(l_n)}).
\end{equation}
Then $z_n>z_{n-1}$ and $\|z_n\|_{\mathcal G \mathcal M}\le 1$ by Proposition \ref{P:5.1}.
Lemma \ref{L:1.13}, Proposition \ref{P:5.1}  and Corollary \ref{C:5.2} yield that 
$z_n$ is an $\ell_1^{k_n}$ average of constant $\frac1D$, with $k_n\ge k'_n$.

 As before we can, by the definition of the yardstick vectors,  write $z_n$ as 
\begin{equation}\label{E:6.7c}
 z_n =\sum_{r=1}^{l_n} z(n,r),
 \end{equation}
where
 the  $z(n,r)$ have pairwise disjoint support and $z(n,r)$ is
 for each $r\le l_i$  
  of the form
\begin{equation*}
z(i,r)=\frac1D\frac{f(j_{2q_n(r_t)})}{j_{2q_n(r_t)}}\sum_{s=1}^{j_{2q_n(r)}} x(i,n,s),
\end{equation*}
where $x(i,n,q)$, $q=1 ,2,\ldots j_{2q_n(r)})$  are elements  of the sequence $\xb^{(n)}$, and we let 
\begin{align}\label{E:6.8}
u_n&=\frac1D\sum_{r=1}^{l_n} \frac{f(j_{2q_n(r)})}{j_{2q_n(r)}}\sum_{s=1}^{j_{2q_n(r)}/2} x(n,r,s)\text{ and }\\
v_n&=\frac1D\sum_{r=1}^{l_n} \frac{f(j_{2q_n(r)})}{j_{2q_n(r)}}\sum_{t=s+(j_{2q_n(r)}/2)}^{j_{2q_n(r)}} x(n,r,s). \notag
\end{align}
It is now easy to find for every $\ib=(i_t: t=1,2\ldots ,l)\subset \{1,2,\ldots, n\}$, with $1\le i_1<i_2<\ldots i_l\le n$, and for each
$\rhob=(\rho_t: t=1,2\ldots ,l)\in\{-1,1\}^l$  a sequence of  functionals  $\zb^*(\ib,\rhob)=(z^*_t:t=1,2,\ldots l)= (z^*_{(\ib,\rhob)}(t): t=1,2\ldots, l) \in B_{{\mathcal G \mathcal M}^*}$ so that  \eqref{E:6.5} holds. Indeed, if the last element $i_l<n$, then we already have chosen  $\zb^*(\ib,\rhob)$. So let us assume $i_l=n$. 
 Let $\ib'=(i_t:t=1,2\ldots, l-1)$ and $\rhob'=(\rho_t:  t=1,2\ldots, l-1)$.  If $\ib'$ and $\rhob'$ are empty we choose $r=1$. Otherwise we choose $r=r(\ib,\rhob)\in\{1,2\ldots,l_n\}$ so that
 $$
  j_{2q_n(r)}= \sigma\big( z^*_{(\ib',\rhob')}(1), z^*_{(\ib',\rhob')}(2),\ldots,z^*_{(\ib',\rhob')}(l-1)\big)$$
  (by choice of $q_n(i)$, $i=1,2\ldots l_n$, this is possible).
  Then choose for every $s=1,2,\ldots, j_{2q_n(r)}$, a functional $x^*_s=x^*(n,s,\ib,\rhob)\in B_{{\mathcal G \mathcal M}^*}\cap {\bf Q}$, so that 
  \begin{align*}
 &x^*_s(x(r,n,s))=x^*_{s'}(x(i,n,s')) \ge 1-2\vp_n, 
  \text{ for $s\not=s'$ in $\{1,2\ldots  j_{2q_n(r)}\}$}\\
  &\text{and } \supp(x^*_q)\subset \ran(x(r,n,q)).\end{align*}
Let 
  $$z^*_{(\ib,\rhob)}(l)=\rho_l \frac1{f(j_{2q_n(r)})} \sum_{q=1}^{j_{2q_n(r)})} x^*_q,$$
and 
    $$\zb^*_{(\ib,\rhob)}=\big(\zb^*_{(\ib',\rhob')},z^*(\ib,\rhob)(l)\big).$$
    It follows therefore that \eqref{E:6.5} is satisfied, which finishes  our recursive definition of $z_n$, $u_n$ and $v_n$.
     
    \smallskip        
    For $n\in\N$ and $l\in J\setminus \{q_n(1),q_n(2), \ldots q_n(l_n)\}$ we estimate $\|z_n\|_l$.

   Note that the construction of the $z_n$ accomplishes the following:

   If $m\in\N$ and $m\le n_1<n_2< 
   \ldots <n_m$ are in $\N$ and $(a_s)_{s=1}^m\subset \R$, we  can 
   choose $\ib=(n_s)_{s1}^m$ and $\rhob=(\text{sign}(a_s))_{s=1}^m$, and conclude that 
   $$z^*=\frac1{\sqrt{f(k)} }\sum_{t=1}^m z^*_{(\ib,\rhob)}(t)\in \Gamma^*_m,$$
 and   
 \begin{equation}\label{E:6.9}
   \Big\|\sum_{s=1}^m a_s z_{n_s}\Big\|\ge z^*\Big(\sum_{s=1}^m a_s z_{n_s} \Big)\ge \frac1{\sqrt{f(k)}}\frac1D \sum_{s=1}^m |a_s|.
   \end{equation}

    After passing to a subsequence we can assume that $(z_n)$ has a spreading model and that it is a   $\frac1D$-RIS. We  define $w_n=u_n-v_n$. Then $(w_n)$ also satisfies  \eqref{E:4.3.1} of the definition of $\frac1D$-RIS, and passing to a subsequence,
    we may also assume that $(w_n)$  has a spreading model satisfying \eqref{E:4.3.2} and  is therefore also a $\frac1D$-RIS.  
  We claim that $(w_n)$  satisfies the condition \eqref{E:5.1.1} of Proposition \ref{P:5.1}
    and it follows therefore that $(w_n)$ has a spreading model which is equivalent to the unit vector basis in $S$.

    We first estimate $\|z_n\|_l$ for $n\in\N$ and  $l\in J$.
    Let $$r_0=\max\{ r=1,2\ldots l_n: j_{2q_n(r-1)} \le l\} \text{ with $j_{2q_n(0)}:=0$}.$$
    Then by the definition of $z_n$ and the $z(n,r)$, by condition \eqref{E:2.6} on the sequence $(j_i)$, and by Lemma \ref{L:1.13},
    it follows that $\sum_{r=r_0}^{l_n} z(n,r)$ is an $\ell_1^k$ average for some $k\ge j_{2q_n(r_0)-1}>l$. It follows therefore from Lemma \ref{L:4.2} that
    \begin{equation}\label{E:6.10} 
    \Big\|\sum_{r=r_0}^{l_n} z(n,r)\Big\|_l\le \frac2{f(l)}.
    \end{equation}
    If $r=1,2\ldots r_0-1$, and thus $j_{2q_n(r)}<l$, we deduce from \eqref{E:2.9} that
    $$f(l)\ge  f( j_{2q_n(r)+1})\ge \frac{2}{\vp_{2q_n(r)+1}} j_{2q_n(r)} $$
    and thus, by Lemma \ref{L:4.7} (b)  
      \begin{equation}\label{E:6.11} 
    \| z(n,r)\|_l\le  \frac{2f(j_{2q_n(r)})}{j_{2q_n(r)}}.
        \end{equation}
    It follows therefore from \eqref{E:6.10}, \eqref{E:6.11} and \eqref{E:2.5} that for $l\in J\setminus \{q_n(1),q_n(2), \ldots q_n(l_n)\}$
    \begin{equation}\label{E:6.12} 
    \| z_n\|_l\le   \sum_{r=1}^{l_n}  \frac{2f(j_{2q_n(r)})}{j_{2q_n(r)}}  + \frac2{f(l)}\le       \frac1{f(j_{2q_n(1)-1})} + \frac2{f(l)}       \le                   \frac1{j_{q_n}}+\frac2{f(l)}.
        \end{equation}
 Using   the same  argument we observe similar inequalities for  $u_n$, $v_n$, $w_n$ :
       \begin{equation}\label{E:6.13} 
    \| u_n\|_l\le   \frac1{j_{q_n}}+\frac2{f(l)} ,\quad    \| v_n\|_l\le   \frac1{j_{q_n}}+\frac2{f(l)}  \text{ and }   \| w_n\|_l\le   \frac1{j_{q_n}}+\frac2{f(l)}.
        \end{equation}
 
      In order to verify  condition \eqref{E:5.1.1} of Proposition \ref{P:5.1}
      let $m\le n_1<n_2<\ldots n_s$ be  in $\N$  and $(a_s)_{s=1}^m\subset \R\setminus\{0\}$ we put
$y=\sum_{s=1}^m a_s w_{n_s}.$
Secondly let $k\in\N$ and $z^*\in \Gamma^*_k$.
As before we write $z^*$ as
$$z^*=\frac1{\sqrt{f(k)}}\sum_{t=1}^k z^*_t \in \Gamma^*_k,$$
with $z^*_1\in A^*_{l_1}$, and  $l_1=j_{2k'}$, for some $k'\ge k$, and $z^*_{i}\in  A^*_{l_{i}}$, with $l_i=\sigma(z^*_1,z^*_2,\ldots,z^*_{i-1})$, for $i=2,\ldots k$
  and assume that 
$$ t_0=\min\{ t=1,\ldots k:      z^*_t(y)\not=0\},$$
 exists (otherwise  $z^*(y)=0$).
 Note that the equalities in   condition \eqref{E:6.5}(c)  imply that $z^*{(\ib,\rhob)}(t)(w_j)=0$ for every $j\in\N$, every increasing sequence 
    $\ib=(i_t: t=1,2\ldots ,l)\subset \{1,2,\ldots, \}$, for each
$\rhob=(\rho_t: t=1,2\ldots ,l)\in\{-1,1\}^l$,  and for every $t=1,2\ldots l$. So it follows that
the sequence $(z^*_1,z^*_2,\ldots z^*_{t_0})$  
cannot be one of the sequences $z^*{(\ib,\rhob)}(t)$,
where    $\ib=(i_t: t=1,2\ldots ,l)\subset \{1,2,\ldots, \}$, is increasing  and
$\rhob=(\rho_t: t=1,2\ldots ,l)\in\{-1,1\}^l$. From the injectivity of $\sigma$
it follows therefore that the sets $\{l_t:t>t_0\}$ and the set 
$\{ j_{2q_n(r)}: n\in\N, r\le l_n\}$ are disjoint. 
    
     We can now  apply \eqref{E:4.13} and \eqref{E:6.13}
     deduce that 
\begin{align*}
|z^*(y)|&\le  \frac{|z^*_{t_0}(y)|}{\sqrt{f(k)}} +\frac{\max_{s\le m}|a_s|}{\sqrt{f(k)}}\left[\sum_{t\in T_0} \max_{s\in S_t} \|w_{n_s}\|_{l_t} + 1+ 2m\vp_{n_1}    \right] \notag\\
  &\qquad + \frac1{\sqrt{f(k)}} \sum_{s=s_0+1}^m |a_s| \Big|  \sum_{t\in T_s} z^*_t(w_{n_s})\Big|\notag\\
  &\le  \frac{|z^*_{t_0}(y)|}{\sqrt{f(k)}} +\frac{\max_{s\le m}|a_s|}{\sqrt{f(k)}}\Big[2+\sum_{t=t_0+1}^k \frac2{f(l_k)}  +\sum_{s=1}^{s_0} \frac1{j_{q_{n_s}}} + \sum_{s=s_0+1}^m\frac{k}{j_{q_{n_s}}}     \Big] \notag
\notag\end{align*}
where by \eqref{E:4.10}   $s_0= \min\big\{s=1,2\ldots \max \supp(w_{s-1})<\vp_{n_1} \sqrt{f(k)}\big\} $.
It follows for $s>s_0$ from  \eqref{E:6.6} that 
$$\sqrt{f(k)}< \frac{\max\supp(z_{s_0})}{\vp_{n_1}}\le \sqrt{f(\vp_{n_s}j_{q_{n_s})}}
 $$
and thus, that $k/{j_{q_{n_s}}}< \vp_{_{n_s}}\le \vp_{n_1}$ which yields,  
$$|z^*(y)|\le\frac{|z^*_{t_0}(y)|}{\sqrt{f(k)}} +5\frac{\max_{s\le m}|a_s|}{\sqrt{f(k)}} $$
and allows us to conclude from Proposition  \ref{P:5.1} that the spreading model of 
$(w_n)$ is equivalent to the unit vector basis  in $S$.

Together with \eqref{E:6.9} we     therefore  proved  the following result.
\begin{thm}\label{T:6.14}  There is a constant $c>1$  so that in  every block subspace of ${\mathcal G \mathcal M}$ we can find  block sequences $u_n$ and $v_n$,
with  $u_1<v_1<u_2<v_2<\ldots$, 
so that   $(u_n-v_n)$ has a  spreading model  which is $c$-equivalent to the unit vector basis of $S$, and  the sequences $(u_n)$, $(v_n)$ 
$(u_n+v_n)$  have spreading models which $c^{-1}$-dominate the norm $\|\cdot\|_{f^{1/2}}$ which  was introduced in Section \ref{S:1}.  I.e. if  we put $x_n=u_n, v_n$ or $u_n+v_n$, for $n\in\N$,
 and we denote by  $(E,\|\cdot\|_E)$  the spreading model of $(x_n)$  and its   basis  by $(e_j)$ then 
\begin{equation}\label{E:6.14.1}
   c\Big\| \sum_{s=1}^\infty a_s e_s\Big\|_E\ge     \| (a_s)\|_{f^{1/2}}     =     \max_{m\in\N, s_1<s_2,\ldots s_m} \frac{1}{\sqrt{f(m)}} \sum_{i=1}^m |a_{s_i}| \text{ for $(a_{s})\in c_{00}$}.
\end{equation}
\end{thm}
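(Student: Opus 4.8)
The plan is to complete the recursive construction of the block sequence $(z_n)$ of the fixed asymptotically isometric SRIS $\xb=(x_i)\subset Y$ (all $\pt(n)$ odd) begun above, and to read off the two required estimates from it. Writing $z_n=\frac1D y_{\xb^{(n)}}\big(j_{2q_n(1)},\dots,j_{2q_n(l_n)}\big)=\sum_{r\le l_n}z(n,r)$ as a sum of disjointly supported yardstick ``leaves,'' I split each leaf at its midpoint to obtain $u_n<v_n$ with $z_n=u_n+v_n$, and set $w_n=u_n-v_n$. The crux of the construction is to choose, at stage $n$, the lengths $\{j_{2q_n(r)}:r\le l_n\}$ so as to enumerate \emph{all} values $\sigma\big(z^*_{(\ib,\rhob)}(1),\dots,z^*_{(\ib,\rhob)}(l)\big)$ over increasing $\ib\subset\{1,\dots,n-1\}$ and signs $\rhob\in\{\pm1\}^l$, first pushing each such value past $j_{q_n+1}$ by perturbing only the final functional $z^*_{(\ib,\rhob)}(l)$, which does not disturb \eqref{E:6.5}. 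This makes the coding self-consistent, so that for every future increasing $\ib\subset\{1,\dots,n\}$ and every $\rhob$ a functional sequence $\zb^*(\ib,\rhob)$ satisfying \eqref{E:6.5}(a)--(d) can be built; along the way one checks the RIS growth restrictions \eqref{E:4.3.1} by picking $k'_n\in J$ with $f(k'_n)/k'_n<\vp_n^2$ and $f(\vp_n\sqrt{k'_n})\ge\vp_n^{-2}\max\supp(z_{n-1})$ and then $q_n$ as in \eqref{E:6.6}, while Lemma~\ref{L:1.13}, Proposition~\ref{P:5.1} and Corollary~\ref{C:5.2} keep $\|z_n\|_{\mathcal G\mathcal M}\le1$ and make $z_n$ an $\ell_1^{k_n}$-average of constant $\tfrac1D$ with $k_n\ge k'_n$.

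For the lower estimate, given $m\le n_1<\dots<n_m$ and scalars $(a_s)$, I would take $\ib=(n_s)_{s\le m}$, $\rhob=(\text{sign}(a_s))_{s\le m}$ and form $z^*=\frac1{\sqrt{f(m)}}\sum_{t\le m}z^*_{(\ib,\rhob)}(t)$; condition \eqref{E:6.5}(d) puts $z^*$ in $\Gamma^*_m\subset B_{\mathcal G\mathcal M^*}$, and \eqref{E:6.5}(c) gives $z^*\big(\sum_s a_s z_{n_s}\big)\ge\frac1{D\sqrt{f(m)}}\sum_s|a_s|$, which is \eqref{E:6.9}. Since moreover $z^*_t(u_{i_t})=z^*_t(v_{i_t})=\tfrac12 z^*_t(z(i_t,r_t))$, the same bound holds with $u$ or $v$ in place of $z$. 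Passing to the spreading model and using that its basis is $1$-unconditional, this is exactly \eqref{E:6.14.1} with constant $D$ for each of $(u_n)$, $(v_n)$ and $(u_n+v_n)$.

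For the upper estimate I would verify hypothesis \eqref{E:5.1.1} of Proposition~\ref{P:5.1} for $(w_n)$. After passing to a subsequence, $(w_n)$ is a $\tfrac1D$-RIS with a spreading model, since it inherits \eqref{E:4.3.1} from $(z_n)$. First one records, for $l\in J$, that $\|z_n\|_l\le\frac1{j_{q_n}}+\frac2{f(l)}$, and likewise for $\|u_n\|_l$, $\|v_n\|_l$, $\|w_n\|_l$ (this is \eqref{E:6.13}): the tail $\sum_{r\ge r_0}z(n,r)$ with $j_{2q_n(r_0-1)}\le l$ is an $\ell_1^k$-average with $k>l$ by \eqref{E:2.6} and Lemma~\ref{L:1.13}, so Lemma~\ref{L:4.2} bounds its $\|\cdot\|_l$-norm by $2/f(l)$, whereas for $r<r_0$ one has $f(l)\ge f(j_{2q_n(r)+1})\ge 2j_{2q_n(r)}/\vp_{2q_n(r)+1}$ by \eqref{E:2.9}, so Lemma~\ref{L:4.7}(b) gives $\|z(n,r)\|_l\le 2f(j_{2q_n(r)})/j_{2q_n(r)}$, and one sums using \eqref{E:2.5}. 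Now fix $m\le n_1<\dots<n_m$, scalars $(a_s)$, $y=\sum_s a_s w_{n_s}$ and $z^*=\frac1{\sqrt{f(k)}}\sum_{t\le k}z^*_t\in\Gamma^*_k$ with least nonzero index $t_0$. The key point is that the equalities in \eqref{E:6.5}(c) force $z^*_{(\ib,\rhob)}(t)(w_j)=0$ for \emph{all} $j,t,\ib,\rhob$, so $(z^*_1,\dots,z^*_{t_0})$ cannot be an initial segment of any built-in special functional, and by injectivity of $\sigma$ the set $\{l_t:t>t_0\}$ is disjoint from $\{j_{2q_n(r)}:n\in\N,\ r\le l_n\}$. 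Feeding \eqref{E:6.13} into the RIS estimate \eqref{E:4.13}, splitting at $s_0=\min\{s:\max\supp(w_{s-1})<\vp_{n_1}\sqrt{f(k)}\}$ as in \eqref{E:4.10}, and using \eqref{E:6.6} to get $k/j_{q_{n_s}}<\vp_{n_s}\le\vp_{n_1}$ for $s>s_0$, one obtains $|z^*(y)|\le\frac{|z^*_{t_0}(y)|}{\sqrt{f(k)}}+\frac{5\max_s|a_s|}{\sqrt{f(k)}}$, which is \eqref{E:5.1.1} (the term $|z^*_{t_0}(y)|$ being $\le\max_{j\in J}\|y\|_j$, as $l_{t_0}\in J$). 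Proposition~\ref{P:5.1} then yields that the spreading model of $(w_n)=(u_n-v_n)$ is $C$-equivalent to the unit vector basis of $S$; taking $c=\max(C,D)$ proves the theorem.

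The main obstacle is the recursive bookkeeping of the first step: arranging the leaf-lengths $j_{2q_n(r)}$ to realize all future $\sigma$-values while simultaneously meeting the growth conditions \eqref{E:4.3.1} and \eqref{E:6.6} and the norm control from Proposition~\ref{P:5.1} and Corollary~\ref{C:5.2}, and confirming that the perturbation in \eqref{E:6.7} of the final functional leaves \eqref{E:6.5} intact. This is exactly where the two modifications of $GM$ --- allowing arbitrary $k$ and $n_1=j_{2k'}$ with $k'\ge k$ in \eqref{E:2.13} --- are used, and it is what forces the construction to run over one fixed SRIS rather than over arbitrary RIS vectors in the subspace. The built-in cancellation $z^*_t(w_j)\equiv0$, which is precisely the reason each yardstick leaf is cut into two halves carrying equal functional mass, is the mechanism that then decouples $(w_n)$ from the coding functionals and drives the $S$ upper estimate.
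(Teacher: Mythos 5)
Your proposal is correct and follows essentially the same route as the paper's own proof: the same recursive construction of $z_n=u_n+v_n$ as halved yardstick vectors over a fixed odd-indexed SRIS with leaf-lengths enumerating the $\sigma$-values of the coded functionals, the same lower bound via the special functionals of \eqref{E:6.5}, and the same verification of hypothesis \eqref{E:5.1.1} for $(w_n)$ using the cancellation $z^*_{(\ib,\rhob)}(t)(w_j)=0$, the injectivity of $\sigma$, and the estimates \eqref{E:6.13} fed into \eqref{E:4.13}. No substantive differences or gaps.
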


Thus,  Corollary \ref{C:1.10}  and Lemma \ref{L:1.9} yield our final result:

\begin{thm}\label{T:6.15} Let $(u_n)$ and $(v_n)$ be as in Theorem \ref{T:6.14}. Then there is a subsequence  
 $(n_k)$ of $\N$ so that  $(u_{n_k})$ and $(v_{n_k})$  are  equivalent.
 \end{thm}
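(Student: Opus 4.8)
The strategy is to factor the desired equivalence through the difference sequence $(w_n)=(u_n-v_n)$. By Theorem \ref{T:6.14} this sequence has a \emph{small} spreading model (it is $c$-equivalent to the unit vector basis of $S$), whereas $(u_n)$ and $(v_n)$ have \emph{large} spreading models (they $c^{-1}$-dominate $\|\cdot\|_{f^{1/2}}$). The gap between the exponents $1/2$ and $1$ is exactly what is needed to feed this asymmetry into Lemma \ref{L:1.8}, which produces bounded operators out of such a pair of estimates.

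First I would record that $(u_n)$, $(v_n)$ and $(w_n)$ are seminormalized weakly null block sequences of ${\mathcal G \mathcal M}$ (seminormalization is part of the conclusion of Theorem \ref{T:6.14}, and weak nullity is clear since normalized block bases of ${\mathcal G \mathcal M}$ are weakly null), so that Lemma \ref{L:1.8} applies to the relevant pairs. I then apply Lemma \ref{L:1.8} with $p=1/2$ and $q=1$ to $(x_n,y_n)=(u_n,w_n)$: the lower estimate in \eqref{E:1.8.1} for the spreading model $E$ of $(u_n)$ holds with constant $1/c$ by \eqref{E:6.14.1}, and the upper estimate in \eqref{E:1.8.1} for the spreading model $F$ of $(w_n)$ holds with constant $c$ because $F$ is $c$-equivalent to the basis of $S$ and, by Corollary \ref{C:1.10},
\[
\Big\|\sum_n a_n\tilde w_n\Big\|_F\le c\Big\|\sum_n a_n e_n\Big\|_S\le c\,\tn\sum_n a_n e_n\tn_f .
\]
Since $1/2<1$, Lemma \ref{L:1.8} yields a subsequence $(n^{(1)}_k)$ along which $u_{n^{(1)}_k}\mapsto w_{n^{(1)}_k}$ extends to a bounded operator $T_1$. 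Passing to $(n^{(1)}_k)$ — all spreading-model estimates of Theorem \ref{T:6.14} are inherited by subsequences, and boundedness of $T_1$ persists on the span of any sub-block-sequence — I invoke Lemma \ref{L:1.8} once more, now on $(x_n,y_n)=(v_n,w_n)$ with the same $p=1/2$, $q=1$, using \eqref{E:6.14.1} for $(v_n)$ and Corollary \ref{C:1.10} for $(w_n)$ as above. This gives a further subsequence $(n_k)\subseteq(n^{(1)}_k)$ along which $v_{n_k}\mapsto w_{n_k}$ extends to a bounded operator $T_2$, while $u_{n_k}\mapsto w_{n_k}$ is still realized by (the restriction of) $T_1$.

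For finitely supported scalars $(a_k)$ this yields
\begin{align*}
\Big\|\sum_k a_k v_{n_k}\Big\|
&=\Big\|\sum_k a_k u_{n_k}-\sum_k a_k w_{n_k}\Big\|\le\bigl(1+\|T_1\|\bigr)\Big\|\sum_k a_k u_{n_k}\Big\|,\\
\Big\|\sum_k a_k u_{n_k}\Big\|
&=\Big\|\sum_k a_k v_{n_k}+\sum_k a_k w_{n_k}\Big\|\le\bigl(1+\|T_2\|\bigr)\Big\|\sum_k a_k v_{n_k}\Big\|,
\end{align*}
so the map $u_{n_k}\mapsto v_{n_k}$ extends to an isomorphism of $[u_{n_k},k\in\N]$ onto $[v_{n_k},k\in\N]$; that is, $(u_{n_k})$ and $(v_{n_k})$ are equivalent. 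The only genuinely delicate point is the double use of Lemma \ref{L:1.8}: one must check that its hypotheses (seminormalized, weakly null, and the two spreading-model inequalities with $p=1/2<q=1$) really do hold for both pairs $(u_n,w_n)$ and $(v_n,w_n)$, and that extracting the first subsequence destroys none of this before the second application. All the conceptual content — that the spreading model of the difference sequence carries a strictly smaller power of $f$ than those of $(u_n)$ and $(v_n)$ — is already supplied by Theorem \ref{T:6.14} and Corollary \ref{C:1.10}.
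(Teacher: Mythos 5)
Your proposal is correct and is essentially the paper's own argument: the paper likewise applies Lemma \ref{L:1.8} twice (with the estimates supplied by Theorem \ref{T:6.14} and Corollary \ref{C:1.10}) to get bounded maps $u_n\mapsto u_n-v_n$ and $v_n\mapsto u_n-v_n$, and then recovers $u_n\mapsto v_n$ and $v_n\mapsto u_n$ as identity-plus-bounded perturbations. Your additional care about weak nullity, the choice $p=1/2<q=1$, and the stability of the hypotheses under the two successive subsequence extractions just makes explicit what the paper leaves implicit.
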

\begin{proof} Using Corollary \ref{C:1.10} and Lemma \ref{L:1.8} twice, we may assume that
\begin{align*}
S_1&: [u_{n} :n\kin\N] \to  [u_{n}-v_{n}: n\kin\N], \quad \text {defined by } u_{n}\mapsto u_{n}-v_n, \text{ and }\\
S_2&: [v_n:n\kin\N] \to  [u_n-v_n:n\kin\N], \quad \text {defined by } v_n\mapsto u_n-v_n,\text{ for $n\kin \N$}
\end{align*}
are bounded. 
So the bounded map $Id_{|[u_n]}-S_1$ defines $u_n \mapsto v_n$, while
$Id_{|[v_n]}+S_2$ defines $v_n \mapsto u_n$, proving the claim.
\end{proof}

\begin{remark} It is worth noting that when $(u_n)$ and $(v_n)$ are intertwined and equivalent in ${\mathcal G \mathcal M}$,  the sequences $(v_n)$ and $(u_{n+1})$ are not, in general, equivalent. Otherwise the shift on $[u_n:n\kin\N]$ would be an isomorphism  and we would obtain an isomorphism of a subspace of ${\mathcal G \mathcal M}$ with its hyperplanes. But this is impossible if $(u_n)$ was picked inside  an HI subspace of ${\mathcal G \mathcal M}$.
\end{remark}
\section{Consequences of the main result}\label{S:7}

\subsection{Asymptotic unconditionality}

\

Recall that a seminormalized basis $(e_n)$ is said to be {\em asymptotically unconditional}
if there exists a constant $C \geq 1$ such that for any $k \in \N$ and any
successive blocks $k<x_1<\cdots<x_k$ on the basis, the sequence
$(x_1,\ldots,x_k)$ is $C$-unconditional. The following is an easy consequence
of Theorem \ref{T:6.14}.

\begin{prop}\label{P:6.16} The space $\mathcal G \mathcal M$ does not contain any asymptotically unconditional block sequence.
\end{prop}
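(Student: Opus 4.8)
The plan is to derive Proposition \ref{P:6.16} directly from Theorem \ref{T:6.14} by exhibiting, inside any block sequence, a finite block segment that is badly unconditioned with constant tending to infinity. Suppose toward a contradiction that $(y_n)$ is a block sequence in $\mathcal G\mathcal M$ that is asymptotically unconditional with constant $C$. Passing to a block subspace, Theorem \ref{T:6.14} furnishes intertwined block sequences $(u_n)$, $(v_n)$ inside $[y_n:n\in\N]$ with $u_1<v_1<u_2<v_2<\cdots$ whose spreading models behave as stated: $(u_n-v_n)$ has spreading model $c$-equivalent to the unit vector basis of $S$, while $(u_n)$, $(v_n)$ and $(u_n+v_n)$ have spreading models that $c^{-1}$-dominate $\|\cdot\|_{f^{1/2}}$.

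The key estimate is the comparison between $\|\sum_{i=1}^k(u_{n_i}-v_{n_i})\|$ and $\|\sum_{i=1}^k(u_{n_i}+v_{n_i})\|$ for large $k$. Using that $(u_n-v_n)$ has spreading model equivalent to the basis of $S$ and the defining norm \eqref{E:1.2} of $S$ (or more simply the submultiplicativity-type estimate $\|\sum_{i=1}^k e_i\|_S\le f(k)$ coming from a single functional in $A^*_k$), for $n_1<\cdots<n_k$ sufficiently far out we get
$$\Big\|\sum_{i=1}^k (u_{n_i}-v_{n_i})\Big\|\le 2c\,\frac{k}{f(k)}.$$
On the other hand, applying the domination of the spreading model of $(u_n+v_n)$ over $\|\cdot\|_{f^{1/2}}$ to the all-ones coefficient vector gives, for $n_1<\cdots<n_k$ far enough out,
$$\Big\|\sum_{i=1}^k (u_{n_i}+v_{n_i})\Big\|\ge \frac1{c}\,\frac{k}{\sqrt{f(k)}}.$$
Hence the ratio $\|\sum(u_{n_i}-v_{n_i})\|\big/\|\sum(u_{n_i}+v_{n_i})\|$ is at most $2c^2/\sqrt{f(k)}\to 0$ as $k\to\infty$. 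Now fix $k$ large enough that $2c^2/\sqrt{f(k)}<1/(2C)$ and then choose $n_1<\cdots<n_k$ far enough along so that both spreading-model estimates hold with distortion less than $2$; this forces $k$ to be small compared with $\min\supp(u_{n_1})$, which is fine since we first fix $k$ and then push the indices out. The finite block sequence $(u_{n_1},v_{n_1},u_{n_2},v_{n_2},\ldots,u_{n_k},v_{n_k})$ has $2k$ terms, lies after any prescribed point of the basis, and the signs $(+,-,+,-,\ldots)$ applied to it turn $\sum(u_{n_i}+v_{n_i})$ into $\sum(u_{n_i}-v_{n_i})$; so the unconditionality constant of this block segment is at least $\|\sum(u_{n_i}+v_{n_i})\|\big/\|\sum(u_{n_i}-v_{n_i})\|>2C$, contradicting the assumed $C$-asymptotic unconditionality of $(y_n)$ (which requires every block segment far enough out to be $C$-unconditional).

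The one point requiring care — and the main technical obstacle — is making the two spreading-model inequalities valid \emph{simultaneously} for the same indices $n_1<\cdots<n_k$: the spreading model is only an asymptotic object, so one must first fix $k$, then invoke the definition of spreading model three times (for $(u_n-v_n)$, for $(u_n+v_n)$, and implicitly for the lower $S$-estimate) to choose a tail along which a single finite configuration of length $2k$ realizes all the relevant norms up to a factor close to $1$. This is routine given that all three sequences are simultaneously extracted in Theorem \ref{T:6.14}; after a common further subsequence, for each $k$ there is $N_k$ such that any $N_k\le n_1<\cdots<n_k$ works. Once this is set up the contradiction is immediate, so the proof is short.

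\begin{proof}
Suppose, for contradiction, that $\mathcal G\mathcal M$ contains a block sequence $(y_n)$ which is asymptotically unconditional with some constant $C\ge 1$. Apply Theorem \ref{T:6.14} inside the block subspace $[y_n:n\in\N]$ to obtain intertwined block sequences $(u_n)$ and $(v_n)$, with $u_1<v_1<u_2<v_2<\cdots$, such that $(u_n-v_n)$ has a spreading model $c$-equivalent to the unit vector basis of $S$ and each of $(u_n)$, $(v_n)$, $(u_n+v_n)$ has a spreading model which $c^{-1}$-dominates $\|\cdot\|_{f^{1/2}}$, where $c>1$. Since all these sequences are obtained by a common passage to subsequences, after a further common subsequence we may assume that for every $k\in\N$ there is $N_k\in\N$ with the following property: for all $N_k\le n_1<n_2<\cdots<n_k$,
\begin{align}\label{E:P616.1}
\Big\|\sum_{i=1}^k (u_{n_i}-v_{n_i})\Big\| &\le 2\,c\,\frac{k}{f(k)},\\
\label{E:P616.2}
\Big\|\sum_{i=1}^k (u_{n_i}+v_{n_i})\Big\| &\ge \frac{1}{2c}\,\frac{k}{\sqrt{f(k)}}.
\end{align}
Indeed, \eqref{E:P616.2} follows from \eqref{E:6.14.1} applied to the coefficient vector $(1,1,\ldots,1)$ of length $k$, which gives $\|(1,\ldots,1)\|_{f^{1/2}}=k/\sqrt{f(k)}$, together with the fact that the spreading model is realized up to a factor $2$ along a far enough tail; and \eqref{E:P616.1} follows from the $c$-equivalence of the spreading model of $(u_n-v_n)$ with the basis of $S$ and the estimate $\|\sum_{i=1}^k e_i\|_S\le f(k)$, again realized up to a factor $2$ along a far enough tail.

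Now choose $k\in\N$ large enough that
$$\frac{2\,c\,\frac{k}{f(k)}}{\frac{1}{2c}\,\frac{k}{\sqrt{f(k)}}}=\frac{4c^2}{\sqrt{f(k)}}<\frac{1}{2C},$$
which is possible since $f(k)\to\infty$. Pick $n_1<n_2<\cdots<n_k$ with $n_1\ge N_k$. Consider the finite block sequence
$$(w_1,w_2,\ldots,w_{2k})=(u_{n_1},v_{n_1},u_{n_2},v_{n_2},\ldots,u_{n_k},v_{n_k}),$$
which is a successive block sequence lying beyond any prescribed point of the basis (by taking $n_1$ large). Applying the signs $\theta_j=(-1)^{j}$ turns $\sum_{j=1}^{2k} w_j=\sum_{i=1}^k (u_{n_i}+v_{n_i})$ into $\sum_{j=1}^{2k}\theta_j w_j=\sum_{i=1}^k (v_{n_i}-u_{n_i})$. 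Hence, using \eqref{E:P616.1} and \eqref{E:P616.2},
$$\frac{\big\|\sum_{j=1}^{2k} w_j\big\|}{\big\|\sum_{j=1}^{2k}\theta_j w_j\big\|}\ge\frac{\frac{1}{2c}\,\frac{k}{\sqrt{f(k)}}}{2\,c\,\frac{k}{f(k)}}=\frac{\sqrt{f(k)}}{4c^2}>2C.$$
Thus the block segment $(w_1,\ldots,w_{2k})$ is not $C$-unconditional. Since $n_1$ can be taken arbitrarily large, this contradicts the asymptotic unconditionality of $(y_n)$ with constant $C$. Therefore $\mathcal G\mathcal M$ contains no asymptotically unconditional block sequence.
\end{proof}
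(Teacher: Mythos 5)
Your proof is correct and is exactly the argument the paper has in mind: the paper states Proposition \ref{P:6.16} as ``an easy consequence of Theorem \ref{T:6.14}'' without writing it out, and the comparison $\|\sum_{i=1}^k(u_{n_i}-v_{n_i})\|\lesssim k/f(k)$ versus $\|\sum_{i=1}^k(u_{n_i}+v_{n_i})\|\gtrsim k/\sqrt{f(k)}$, applied to the alternating-sign $2k$-term block $(u_{n_1},v_{n_1},\ldots,u_{n_k},v_{n_k})$, is precisely the mechanism sketched in Subsections 1.5 and 7.2. One small slip: the upper estimate you need is $\|\sum_{i=1}^k e_i\|_S=k/f(k)$ (as stated in the paper), not ``$\le f(k)$,'' and it is this equality, not a single functional in $A_k^*$ (which only gives the lower bound), that justifies your first displayed inequality; the rest of the argument is unaffected.
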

We recall that the asymptotically unconditional HI space $G$ of Gowers is tight by range \cite{FR2} and therefore   contains no  intertwined and equivalent block sequences.

The sequences $(u_n)$ and $(v_n)$ are chosen in an arbitrary, but fixed subspace $Y$ of ${\mathcal G \mathcal M}$, and this is why our techniques do not seem to imply  that ${\mathcal G \mathcal M}$  is HI (although we suspect it is). This restriction is essentially technical, however, since as we shall now see, by using Gowers' Ramsey theorem, it disappears when  passing to an appropriate subspace
of ${\mathcal G \mathcal M}$.

\subsection{Applications of Gowers' Theorem}

\

Recall that Gowers' game ${\mathcal G}_X$ in a space $X$ with a basis is a game between two players, where Player 1 plays block subspaces
$Y_n$ of $X$ and Player 2 successive blocks $y_n \in Y_n$, the outcome of the game being the block-sequence $(y_n)$.

The set $b(X)$ of block-sequences of $X$ is seen as a subset of $X^\omega$ equipped with the product of the norm topology on $X$. Also for $\Delta=(\delta_n)_n$ a sequence of positive number, and $\A \subset b(X)$, the set ${\A}_{\Delta}$ is defined as
$${\A}_{\Delta}=\{(x_n) \in b(X) \del \exists (y_n) \in \A, \|y_n-x_n\| \leq \delta_n \forall n\}.$$

\begin{thm}[Gowers' Ramsey Theorem, \cite{g:dicho}]\label{ramsey} Let $X$ be a space with a basis, and $\A$ an analytic subset of $b(X)$. Let $\Delta>0$. Then there exists a block-subspace $Y$ of $X$ such that $\A \cap b(Y) = \emptyset$, or such that Player 2 has a winning strategy in Gowers' game ${\mathcal G}_Y$ to
produce an outcome in ${\A}_{\Delta}$. 
\end{thm}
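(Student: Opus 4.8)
The statement is Gowers' block Ramsey theorem, so the plan is to reproduce the combinatorial forcing argument of \cite{g:dicho}. First I would reduce the analytic set to a manageable shape: write $\A$ as the projection to $b(X)$ of a closed subset $F$ of $b(X)\times\omega^{\omega}$ and pass to the \emph{unfolded} Gowers game, in which Player 2, together with each block vector, also reveals one further coordinate of a guessed point of $\omega^{\omega}$; a winning strategy of Player 2 in the unfolded game with outcome in (the pullback of) $F$ projects to one with outcome in $\A_{\Delta}$, and a block subspace on which the unfolded outcome set is empty projects to one $Y$ with $b(Y)\cap\A=\emptyset$. I would also replace each tail of the basis by a countable $\vp$--net, taken fine enough relative to $\Delta$, so that only countably many finite block sequences ever have to be examined and the resulting perturbations are absorbed by the passage from $\A$ to $\A_{\Delta}$. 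After these reductions the target is a countable closed set $\cB$ of infinite block sequences, and for a finite block sequence $s$ and a block subspace $Y$ one sets up the forcing notions: $Y$ \emph{accepts} $s$ if Player 2 has a winning strategy in the game played inside $Y$ from the position $s$ all of whose outcomes lie in $\cB$, and $Y$ \emph{rejects} $s$ if no block subspace of $Y$ accepts $s$; acceptance is monotone under passing to block subspaces.

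The argument would then run in three stages. \emph{(i) Stabilization.} Since only countably many finite block sequences are relevant, a fusion produces a single block subspace $Y_{0}$ that \emph{decides} every finite block sequence supported in it, and --- because both acceptance and rejection are inherited by block subspaces of $Y_{0}$ --- ``accepted'' and ``rejected'' become properties of $s$ alone. \emph{(ii) Amalgamation.} The core is a pigeonhole lemma: if $s$ is rejected then, inside any prescribed block subspace of $Y_{0}$, there is a further block subspace $Z$ with $s^{\frown}y$ rejected for every block vector $y$ of $Z$ after $s$. For if not, then in every block subspace there would be a block vector $y$ with $s^{\frown}y$ accepted, and Player 2 could win from $s$ by choosing, each time Player 1 presents a block subspace, such a good vector inside it and then following the strategy witnessing the acceptance of $s^{\frown}y$; this contradicts that $s$ is rejected. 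Iterating this lemma along a fusion gives $Y_{1}\subseteq Y_{0}$ such that, starting from the hypothesis that the empty sequence is rejected, every finite block sequence supported in $Y_{1}$ is rejected. \emph{(iii) Dichotomy.} If some finite block sequence supported in $Y_{1}$ is accepted, then running the amalgamation in reverse shows that the empty sequence is accepted, i.e. Player 2 has a winning strategy in $\cG_{Y_{1}}$ with outcome in $\cB\subseteq\A_{\Delta}$, which is the second alternative. Otherwise the empty sequence is rejected, and the rejection data assemble --- via one more diagonalization --- into a block subspace $Z$ of $Y_{1}$ together with a coherent, subspace--valued way for Player 1 to confine every block sequence of $Z$ outside $\cB$, so that $b(Z)\cap\cB=\emptyset$; undoing the net reduction yields a block subspace $Y$ with $b(Y)\cap\A=\emptyset$, the first alternative.

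I expect stage (ii) to be the main obstacle, because it is precisely where the game cannot be dispensed with: the game--free statement --- that any partition of $b(X)$ into an analytic piece and its complement has one piece containing a whole block subspace --- is false, so the proof must genuinely exploit that Player 1 may restrict to block subspaces and that Player 2 answers only after seeing the restriction, and the extraction of a Player 2 strategy in the proof of the pigeonhole lemma, together with organizing the fusions in (i) and (ii) so that all the relevant $s$ are handled without collapsing the subspace (block subspaces not being closed under intersection, the amalgamations must be nested rather than intersected), is where the real work lies. By comparison, the unfolding reduction and the final extraction in (iii) are routine bookkeeping once this pattern is in place.
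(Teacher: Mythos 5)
The paper does not prove this statement: Theorem \ref{ramsey} is quoted verbatim from Gowers' paper \cite{g:dicho} and used as a black box, so there is no in-paper proof to compare yours against. Judged against the known proofs, your architecture --- unfolding the analytic set to a closed one via an auxiliary $\omega^\omega$-coordinate, discretizing to a countable net absorbed by $\Delta$, and then running a Nash--Williams style combinatorial forcing with stabilization, a strategy-stealing pigeonhole step, and nested (not intersected) fusions --- is the right skeleton, and is closer to the streamlined treatments of the analytic case (Rosendal, Bagaria--L\'opez-Abad) than to Gowers' original handling of analytic sets.

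There is, however, one genuine gap, and it sits exactly where you locate the ``real work'': your forcing notions are oriented the wrong way for the first alternative. You define ``$Y$ accepts $s$'' as Player 2 having a winning strategy from $s$ into $\cB$, and ``rejects'' as no block subspace accepting. Stages (i) and (ii) then go through, but stage (iii) does not: if every finite block sequence supported in $Y_1$ is rejected, the diagonalization you describe only produces a block subspace $Z$ in which \emph{Player 1 has a strategy to keep the outcome of the game} out of $\cB$. That is the weaker ``adversarial/strategic'' dichotomy; it does not yield $b(Z)\cap\cB=\emptyset$, because a single block sequence of $Z$ lying in $\cB$ gives Player 2 no strategy and hence contradicts nothing. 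Gowers' actual argument avoids this by making the combinatorial (game-free) notion primary: for the set $\sigma$ of relevant finite block sequences one defines $\sigma$ to be \emph{large} for $Y$ if every block subspace of $Y$ contains a member of $\sigma$, the first alternative is precisely the failure of largeness (which for the closed set $\cB$ directly gives a $Y$ with $b(Y)\cap\A=\emptyset$), and the content of the key lemma is that largeness, after stabilization and the pigeonhole step, can be upgraded to \emph{strategic} largeness for $\sigma_\Delta$. In other words, the strategy for Player 2 is the \emph{output} of the forcing, not its definition; with that reorientation your stages (i)--(iii) become the correct proof.
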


Given $c \geq 1$, consider the set $\A$ of block sequences $(x_n)_n$ in ${\mathcal G \mathcal M}$
such that $(x_{2n}-x_{2n+1})$ has a spreading model which is $c$-equivalent to the unit vector basis of $S$, and  the sequences $(x_{2n})$, $(x_{2n+1})$ 
$(x_{2n}+x_{2n+1})$  have a spreading model which $c^{-1}$ dominate the norm $\|\cdot\|_{f^{1/2}}$. It is easily checked that $\A$ is Borel. So up to modifying the constant $c$ to take into account a small enough perturbation $\Delta$, we may apply Gowers' Theorem to find a block-subspace $Y$ of
${\mathcal G \mathcal M}$, so that the vectors $u_n$ and $v_n$ of Theorem  \ref{T:6.14} may be chosen in arbitrary block-subspaces of $Y$ prescribed by Player~1. 

\begin{prop}\label{HIHI} There exists $c \geq 1$ and a block subspace of ${\mathcal G \mathcal M}$ in which Player 2 has a winning strategy to produce
 $u_1<v_1<u_2<v_2<\ldots$, 
so that   $(u_n-v_n)$ has a  spreading model  which is $c$-equivalent to the unit vector basis of $S$, and  the sequences $(u_n)$, $(v_n)$ 
$(u_n+v_n)$  have a spreading models which  $c^{-1}$ dominate the norm $\|\cdot\|_{f^{1/2}}$.
\end{prop}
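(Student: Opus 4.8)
The plan is to feed the Borel set $\A$ described just above into Gowers' Ramsey Theorem (Theorem~\ref{ramsey}) and to exclude its first alternative by means of Theorem~\ref{T:6.14}. First I would fix the constant $c_0\ge 1$ provided by Theorem~\ref{T:6.14} and take $\A$ to be the set of block sequences $(x_n)$ of ${\mathcal G \mathcal M}$ such that, writing $u_n,v_n$ for their consecutive pairs, $(u_n-v_n)$ generates a spreading model $c_0$-equivalent to the unit vector basis of $S$, while $(u_n)$, $(v_n)$ and $(u_n+v_n)$ generate spreading models $c_0^{-1}$-dominating $\|\cdot\|_{f^{1/2}}$. The next point is that $\A$ is Borel: requiring a block sequence to generate a spreading model is the Borel condition that $\lim_{n_1<\cdots<n_k}\|\sum_i a_i x_{n_i}\|$ exist for every $k$ and every choice of coefficients from a fixed countable dense subset of $\R$, and on the set where these limits exist both $c_0$-equivalence to the basis of $S$ and $c_0^{-1}$-domination of $\|\cdot\|_{f^{1/2}}$ are closed conditions on them; since passing from $(x_n)$ to $(u_n-v_n)_n$, $(u_n)_n$, etc. is continuous, $\A$ is Borel, a fortiori analytic.

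Then I would observe that $\A$ meets $b(Y)$ for \emph{every} block subspace $Y$ of ${\mathcal G \mathcal M}$: this is precisely what Theorem~\ref{T:6.14} delivers, for the intertwined sequences $u_1<v_1<u_2<v_2<\cdots$ it produces inside $Y$ assemble into a single block sequence $(u_1,v_1,u_2,v_2,\dots)\in b(Y)$ whose consecutive pairs are exactly the $(u_n,v_n)$, and which therefore lies in $\A$. (These sequences are automatically seminormalized: the spreading-model domination of $\|\cdot\|_{f^{1/2}}$ bounds each of $(u_n)$, $(v_n)$, $(u_n+v_n)$ below, and each is an interval part of the norm-at-most-one vector $z_n=u_n+v_n$.) Consequently the first alternative in Theorem~\ref{ramsey} — a block subspace $Y$ with $\A\cap b(Y)=\emptyset$ — cannot occur, so Gowers' theorem hands us a block subspace $Y_0$ of ${\mathcal G \mathcal M}$ in which Player~2 has a winning strategy in ${\mathcal G}_{Y_0}$ forcing every outcome into $\A_\Delta$.

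It remains to calibrate the perturbation sequence $\Delta=(\delta_n)$ small enough — $\delta_n$ a suitably small multiple of $2^{-n}$ relative to the uniform seminormalization lower bound coming from Theorem~\ref{T:6.14} — so that any block sequence in $\A_\Delta$ stays seminormalized and, along the subsequences witnessing proximity to $\A$, its spreading models of $(u_n-v_n)$, $(u_n)$, $(v_n)$ and $(u_n+v_n)$ differ from those of the nearby element of $\A$ by at most a factor $2$; then, reading the outcome of Player~2's strategy as the intertwined pair $(u_n),(v_n)$, the Proposition follows with, say, $c=2c_0$. The substantive content has already been proved in Theorem~\ref{T:6.14}, so I do not expect any real obstacle here; the one step requiring a little care is the choice of $\Delta$ together with the (routine) stability of these spreading-model estimates under small, seminormalization-preserving perturbations.
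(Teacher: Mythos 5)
Your proof is correct and follows essentially the same route as the paper: define the Borel set $\A$ from the conclusion of Theorem \ref{T:6.14}, observe that Theorem \ref{T:6.14} rules out the first alternative of Gowers' Ramsey theorem in every block subspace, and absorb the perturbation $\Delta$ into the constant. The paper compresses this into the short paragraph preceding the proposition; your extra details on the Borelness of $\A$ and on the stability of the spreading-model estimates under small, seminormalization-preserving perturbations are correct and routine.
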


Note that $\|\sum_{i=1}^m e_i\|_{f^{1/2}}=mf(m)^{-1/2}$ while
$\|\sum_{i=1}^m e_i\|_S=mf(m)^{-1}$. So for any $\epsilon>0$, one can find $m \in \N$ with following property: for any $U, V$ block subspaces of $Y$, there exist $u_1<v_1<\cdots<u_m<v_m$, with $u_i \in U, v_i \in V$ for each $i$, such that
$\|\sum_{i=1}^m u_i-v_i\|<\epsilon \|\sum_{i=1}^m u_i+v_i\|$,
which of course implies the  HI property. This property is actually the uniform version of the HI property which appears as the counterpart of asymptotic unconditionality in the dichotomy proved by Wagner  \cite{W}.

\

The third dichotomy implies that we may assume that the space of Proposition \ref{HIHI} is tight, and the fourth dichotomy that it is subsequentially minimal. Actually slightly more may be observed.  

\begin{thm}\label{goma} There exists a tight HI block-subspace ${\mathcal X}_{GM}$ of ${\mathcal G \mathcal M}$ with a normalized basis which is subsequentially minimal.
 More precisely, there exists $c \geq 1$, such that for any block-subspace $Y$ of ${\mathcal X}_{GM}$, there exists a block-sequence
$(y_k)$ of $Y$ and a subsequence $(f_k)$ of the basis  of ${\mathcal X}_{GM}$ such that
\begin{itemize}
\item[(a)] $y_1<f_{1}<y_2<f_{2}<\cdots$ 
\item[(b)] $(y_k), (f_k), (y_k+f_k)$ have spreading models which $c^{-1}$ dominate the norm $\|\cdot\|_{f^{1/2}}$,
\item[(c)] $(y_k-f_k)$ has  a spreading model which is $c$-equivalent to the unit vector basis of $S$,
\item[(d)] consequently, 
$(f_k)$ is equivalent to $(y_{k})$.
\end{itemize}
\end{thm}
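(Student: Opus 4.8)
The plan is to combine Gowers' Ramsey theorem with the Kalton--Werner type third and fourth dichotomies, using the construction from Theorem \ref{T:6.14} as the combinatorial engine. Concretely, I would first invoke Proposition \ref{HIHI} to pass to a block subspace $Y_0$ of ${\mathcal G \mathcal M}$ in which Player~2 has a winning strategy, in Gowers' game ${\mathcal G}_{Y_0}$, to produce an intertwined sequence $u_1<v_1<u_2<v_2<\cdots$ with $(u_n-v_n)$ having a spreading model $c$-equivalent to the unit basis of $S$ and $(u_n),(v_n),(u_n+v_n)$ spreading models $c^{-1}$-dominating $\|\cdot\|_{f^{1/2}}$. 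Then I would apply the third dichotomy (Theorem \ref{T:0.3}) together with the fourth dichotomy (Theorem \ref{main2}) inside $Y_0$: since $Y_0$ is a subspace of ${\mathcal G \mathcal M}$, it has no unconditional block basis (by Proposition \ref{P:6.16}, or directly by Theorem \ref{T:6.14}), so it is HI-saturated and in particular contains no minimal subspace (an HI space is never minimal, being non-isomorphic to its proper subspaces by \cite{GM}); hence the third dichotomy forces a tight subspace, and the fourth forces it, after a further refinement, to be subsequentially minimal. Choosing ${\mathcal X}_{GM}$ to be such a subspace that \emph{also} retains the winning strategy for Player~2 (which survives passing to further block subspaces, since Gowers' theorem produces a block subspace on which the strategy is valid) gives a tight, HI, subsequentially minimal block subspace with a normalized basis $(f_k)$.

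The second and more quantitative half is to upgrade ``subsequentially minimal'' to the explicit statement (a)--(d). Given a block subspace $Y$ of ${\mathcal X}_{GM}$, I would run Player~2's winning strategy against a specific strategy for Player~1: Player~1 alternately offers $Y$ (when Player~2 must produce a ``$u$''-type vector) and a tail of the basis $[f_k:k\ge N]$ for suitable $N$ (when Player~2 must produce a ``$v$''-type vector). Because Player~2 follows the winning strategy, the resulting outcome is, after relabeling $u_k:=y_k$ and $v_k:=f_k$, an intertwined sequence $y_1<f_1<y_2<f_2<\cdots$ with exactly the spreading-model properties of Proposition \ref{HIHI}: $(y_k-f_k)$ has spreading model $c$-equivalent to the unit basis of $S$, and $(y_k),(f_k),(y_k+f_k)$ have spreading models $c^{-1}$-dominating $\|\cdot\|_{f^{1/2}}$. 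This is items (a), (b), (c). For item (d) I would invoke Theorem \ref{T:6.15} verbatim: its proof only uses the spreading-model hypotheses of Theorem \ref{T:6.14}, via Corollary \ref{C:1.10}, Lemma \ref{L:1.9} and two applications of Lemma \ref{L:1.8} (with $p=1/2<q$ chosen appropriately), to produce a subsequence along which $y_k\mapsto f_k$ and $f_k\mapsto y_k$ both extend to bounded operators, whence $(f_k)$ is equivalent to $(y_k)$.

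The main obstacle I anticipate is the coordination of the three passages to subspaces so that the final ${\mathcal X}_{GM}$ simultaneously (i) carries Player~2's winning strategy, (ii) is tight, and (iii) is subsequentially minimal --- and moreover so that in step two Player~1 is allowed to play \emph{tails of the fixed basis of ${\mathcal X}_{GM}$}, not merely arbitrary block subspaces. The point is that tightness and subsequential minimality come from the abstract dichotomies applied to a countable analytic family, while the winning-strategy property is preserved under the block-subspace passages in Gowers' theorem; one must check that applying Theorems \ref{T:0.3} and \ref{main2} to a subspace on which the strategy already holds does not destroy the strategy --- it does not, because having a winning strategy in ${\mathcal G}_Y$ for an outcome set of the form ${\A}_\Delta$ is hereditary in the sense that it persists for block subspaces of $Y$. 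A secondary technical point is that to get (d) with a \emph{single} constant $c$ uniform over all block subspaces $Y$ of ${\mathcal X}_{GM}$, one needs the constants in Lemmas \ref{L:1.6}, \ref{L:1.7}, \ref{L:1.8} and Corollary \ref{C:1.10} to depend only on the parameters $p,q$ and the fixed spreading-model constant $c$ from Proposition \ref{HIHI}, which is indeed the case since all those constants are absolute functions of $p$, $q$ (and the ambient function $f$); I would remark on this uniformity explicitly rather than re-derive it.
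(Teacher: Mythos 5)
Your overall architecture (Gowers' Ramsey theorem plus the dichotomies, with Theorem \ref{T:6.14}/\ref{T:6.15} supplying the analytic set $\A$ and item (d)) matches the paper's, but there is a genuine gap in the step where you produce $(f_k)$ as a \emph{subsequence of the basis} of ${\mathcal X}_{GM}$. You propose to have Player~1 alternately play $Y$ and tails $[f_k : k\ge N]$ of the basis, and to read off the ``$v$-type'' moves as the desired $f_k$. But in Gowers' game Player~2's response to the subspace $[f_k : k\ge N]$ is only guaranteed to be a \emph{block vector} supported on that tail, i.e.\ a finite linear combination of basis vectors --- not an actual basis vector $f_k$. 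Nothing in the winning strategy forces Player~2 to answer with an element of the basis, so your outcome sequence interleaves $Y$-blocks with blocks of the basis, which is strictly weaker than what the theorem asserts. This is precisely the obstruction that the paper's proof (following \cite[Proposition 6.5]{FR}) is designed to overcome: one does \emph{not} fix the basis first and then run the game against it; instead one extracts the countable tree $T$ of finite initial segments of plays consistent with Player~2's winning strategy, uses the fact that every node of $T$ can be extended by a block of \emph{any} prescribed block sequence (\cite[Lemma 6.4]{FR}), and then \emph{defines} the basis $(v_n)$ of ${\mathcal X}_{GM}$ as a countable ``dense'' set of continuations, so that every node of $T$ admits a successor that is literally one of the $v_n$. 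Only with the basis built a posteriori from the tree can one alternate between extending by a block of the given $Y$ and extending by an actual basis vector, landing in $[T]\subseteq\A$.

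A related, smaller point: you invoke the fourth dichotomy to obtain subsequential minimality, but that dichotomy is a black box that produces \emph{some} subsequentially minimal basis with no control over which subsequence embeds where; it cannot deliver the quantitative clauses (a)--(c) for the specific basis of ${\mathcal X}_{GM}$. In the paper the fourth dichotomy is deliberately bypassed: subsequential minimality (in the strengthened form (a)--(d)) is read off directly from the tree construction, and only the first and third dichotomies are used (to get HI and tightness). Your observations that the winning strategy is hereditary under passing to block subspaces, and that the constants in Lemmas \ref{L:1.6}--\ref{L:1.8} are uniform, are both correct and are indeed the reasons the order of the subspace passages is harmless; but the tree argument is the missing essential idea.
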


This is a variation on \cite[Proposition 6.5]{FR}. Since the proof is much shorter than the demonstration of the fourth dichotomy, we give a sketch of it.

\begin{proof} Let $\A \subset b({\mathcal G \mathcal M})$ be defined as after Theorem \ref{ramsey}. Using Gowers'  first dichotomy  (see Theorem \ref{T:0.1}), the fact that no
 HI space has a minimal subspace, and  the third dichotomy proven in \cite{FR} (see Theorem \ref{T:0.3})
 we may pass to an HI tight subspace.
  By Theorem \ref{T:6.15} and Gowers' Ramsey Theorem (Theorem \ref{ramsey})
  we can, after  modifying $c$,   assume that Player 2 has a winning strategy in Gowers'
game to play inside  $\A$; also we may and shall only use blocks with rational coordinates in this proof (and assume Gowers' game is played with such blocks). Then the finite block-sequences of initial moves prescribed by the winning strategy of Player~2 form a non-empty tree $T$ which does not have any maximal elements.
  We denote by $[T]$ the infinite block sequences $(x_j)$ for which all the initial segments $(x_j)_{j=1}^n$, $n\in\N$, lie in $T$. 
   Then $[T]\subseteq \A$ and for all
$(y_0,\ldots,y_m)\in T$ and all block sequences $(z_n)$, there is a block
$y_{m+1}$ of $(z_n)$ such that $(y_0,\ldots,y_m, y_{m+1})\in T$, \cite[Lemma 6.4]{FR}.
Since $T$ is
countable, we can construct inductively a block sequence $(v_n)$ 
such that for all $(u_0,\ldots,u_m)\in T$ there is some $v_n$ with
$(u_0,\ldots,u_m,v_n)\in T$.

We claim that ${\mathcal X}_{GM}:=[v_n, n \in \N]$ works. 
Indeed if $(z_n)$ is any block sequence of $(v_n)$,
we may construct inductively a block-sequence $(y_i)$ of $(z_n)$ and a subsequence $(f_i)$  of $(v_n)$ such that
$(y_0,f_0,\cdots,y_n,f_n)$ belongs to $T$ for all $n$. Therefore 
$(y_0,f_0,y_1,f_1,\cdots)$ belongs to $\A$.
Finally the normalized basis $(v_n/\|v_n\|)$ of ${\mathcal X}_{GM}$ satisfies the conclusion
of Theorem \ref{goma}.
\end{proof}

Since this construction can be done in any block-subspace of ${\mathcal G \mathcal M}$,  we may assume that ${\mathcal X}_{GM}$ is actually sequentially minimal.

\subsection{Local minimality}

\

We briefly expose the fifth dichotomy obtained in \cite{FR},
which is related to the second general kind of tightness called  tightness with constants. A space $X=[e_n]$ is
tight {\em with constants} when for for every infinite dimensional space $Y$, the
sequence of successive subsets $I_0<I_1<\ldots$ of $\N$ witnessing the
tightness of $Y$ in $X$ may be chosen so that
$Y \not\sqsubseteq_K [e_n \del n \in \N \setminus I_K]$ for each $K$. Equivalent no infinite dimensional space embeds uniformly into the tail subspaces of $X$ \cite[Proposition 4.1]{FR}. This is the case for Tsirelson's space $T$ or its $p$-convexified version $T^{(p)}$.

On the other hand we already mentioned that a space $X$ is said to be {\em locally minimal} if there exists a constant $K \geq 1$ such that every finite dimensional subspace of $X$ $K$-embeds into every infinite dimensional subspace of $X$. 

\begin{thm}[Fifth dichotomy \cite{FR}] Any Banach space  contains a subspace with a basis which is either tight
with constants or  locally minimal. \end{thm}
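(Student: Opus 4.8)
The plan is to derive the dichotomy from Gowers' Ramsey theorem (Theorem \ref{ramsey}) together with the equivalences for tightness with constants established in \cite{FR}. First I would pass to a subspace, so that $X$ has a Schauder basis $(e_n)$. Two reformulations are useful: a space with a basis is locally minimal if and only if there is a single constant $K$ such that $[e_1,\ldots,e_j]$ $K$-embeds into every infinite dimensional block subspace for every $j$ (every finite dimensional subspace of $X$ $(1+\epsilon)$-embeds into some $[e_1,\ldots,e_j]$, and every infinite dimensional subspace contains a block subspace up to $(1+\epsilon)$); and, by \cite[Proposition 4.1]{FR} recalled above, $X$ is tight with constants if and only if no infinite dimensional space embeds uniformly into the tail subspaces $[e_n:n\ge m]$, $m\in\N$, of $X$.

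For each $K\in\N$ let $\cA_K\subseteq b(X)$ consist of the block sequences $(x_n)$ such that $[e_1,\ldots,e_j]\sqsubseteq_K[x_n:n\ge m]$ for all $j$ and all $m$. This set is analytic, being a countable intersection over pairs $(j,m)$ of the analytic conditions ``there exist finitely many vectors of the closed span $[x_n:n\ge m]$ implementing a $K$-embedding of $[e_1,\ldots,e_j]$'', and it is tail-stable: every tail of a member of $\cA_K$ is again in $\cA_K$. Fixing a small summable $\Delta$ and applying Theorem \ref{ramsey} iteratively -- at step $K$ working inside the block subspace $Y_{K-1}$ produced at the previous step -- one obtains a decreasing chain $Y_1\supseteq Y_2\supseteq\cdots$ of block subspaces such that for each $K$ either $\cA_K\cap b(Y_K)=\emptyset$, or Player~2 has a winning strategy in the Gowers game $\cG_{Y_K}$ to produce an outcome in $(\cA_K)_\Delta$.

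Suppose first that for some $K$ the second alternative holds in $Y_K$. Then $Y_K$ is locally minimal: given an infinite dimensional $Z\subseteq Y_K$, pick normalized vectors $(z_n)$ in $Z$ that are $(1+\epsilon)$-equivalent to a block sequence $(z_n')$ of $Y_K$, and let Player~1 play the block subspace $[z_n']$ at every move while Player~2 follows the winning strategy; the resulting outcome $(x_n)$ is a block sequence of $[z_n']$ lying in $(\cA_K)_\Delta$, so for $\Delta$ small we have $[e_1,\ldots,e_j]\sqsubseteq_{K+1}[x_n]$ for every $j$, and since $[x_n]$ is $(1+\epsilon)$-isomorphic to a subspace of $[z_n]\subseteq Z$ it follows that $[e_1,\ldots,e_j]$ embeds into $Z$ with a constant depending only on $K$ for every $j$. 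As every finite dimensional subspace of $Y_K$ lies, up to $1+\epsilon$, inside some $[e_1,\ldots,e_j]$ and $Z$ was arbitrary, $Y_K$ is locally minimal and the proof ends here.

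In the remaining case $\cA_K\cap b(Y_K)=\emptyset$ for every $K$. Passing to a diagonal block subspace $Y_\infty$ of $(Y_K)$, with basis $(w_k)$ chosen so that each tail $(w_k)_{k\ge K}$ is a block sequence of $Y_K$, one reads off that for every block sequence $(x_n)$ of $Y_\infty$ and every $K$ there are $j$ and $q$ with $[e_1,\ldots,e_j]\not\sqsubseteq_K[x_n:n\ge q]$. The goal is then to conclude, via \cite[Proposition 4.1]{FR}, that $Y_\infty$ is tight with constants: were some infinite dimensional $W\subseteq Y_\infty$ to embed with a uniform constant into all tails $[w_k:k\ge p]$, a gliding-hump perturbation would replace $W$ by a block sequence of $Y_\infty$ whose span still embeds with a uniform constant into all tails of $Y_\infty$, and one would confront this with the displayed property to reach a contradiction. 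I expect this last implication to be the main obstacle: the Ramsey alternative speaks of finite dimensional spaces failing to embed into tails of block sequences, while tightness with constants speaks of infinite dimensional spaces failing to embed uniformly into tail subspaces, and reconciling the two requires the perturbation and stabilisation arguments underlying Proposition 4.1 of \cite{FR}; it is conceivable that the set $\cA_K$ above has to be replaced by a more carefully designed finite dimensional invariant for this bridge to close cleanly.
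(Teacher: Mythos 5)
This theorem is not proved in the paper at all: it is quoted verbatim from \cite{FR}, so there is no internal argument to measure your proposal against, and for the purposes of this paper no proof is required. Judged on its own terms, your sketch gets the easy half right --- if Player~2 wins the game for some $\cA_K$, the argument that $Y_K$ is locally minimal is essentially correct modulo the standard perturbation facts you invoke (finite dimensional subspaces sit $(1+\epsilon)$-inside initial segments $[e_1,\ldots,e_j]$, infinite dimensional subspaces contain $(1+\epsilon)$-block subspaces, and a small $\Delta$ only degrades the embedding constant slightly).

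The gap you flag at the end is, however, not a technicality but the crux, and with the set $\cA_K$ as you have defined it the second alternative simply does not deliver tightness with constants. Trace the quantifiers: after the reductions, the hypothesis to be contradicted is that some block subspace $[x_n]$ of $Y_\infty=[w_k]$ satisfies $[x_n:n\in\N]\sqsubseteq_{K'}[w_k:k\ge p]$ for all $p$, i.e.\ an \emph{infinite dimensional} space embeds, in a fixed direction, into all \emph{tails} of $Y_\infty$. What the negative Ramsey alternative gives you is that for every block sequence $(x_n)$ of $Y_\infty$ and every $K$ there are $j,q$ with $[e_1,\ldots,e_j]\not\sqsubseteq_K[x_n:n\ge q]$ --- a statement about embeddings of initial segments of the \emph{ambient} basis, going in the \emph{opposite} direction, into tails of $(x_n)$ rather than of $(w_k)$. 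These two assertions are about different pairs of spaces and are not in tension, so no contradiction is available. Nor is the fix as simple as replacing $[e_1,\ldots,e_j]$ by $[x_1,\ldots,x_j]$: the failure of embedding produced by diagonalizing over the $\cA_K$ lives in some small further block subspace, whereas uniform embeddability into tails is a much weaker hypothesis (tails are large subspaces), so one still needs the bridge --- via crude finite representability and the stabilization argument behind \cite[Proposition 4.1]{FR} --- from ``some infinite dimensional $W$ embeds uniformly into all tails'' to a uniform statement about finite dimensional subspaces of a block subspace embedding into all further block subspaces. That bridge is precisely the content of the proof in \cite{FR}, and it is the part your proposal leaves open.
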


Since $S$ contains $\ell_\infty^n$'s uniformly and since ${\mathcal G \mathcal M}$ is saturated with sequences with  spreading model $c$-equivalent to the basis of $S$, ${\mathcal G \mathcal M}$ also contains $\ell_\infty^n$'s uniformly in every subspace. So by the universal properties of these spaces, ${\mathcal G \mathcal M}$ is locally minimal.

\begin{thm} There exists a locally and sequentially minimal HI Banach space. 
\end{thm}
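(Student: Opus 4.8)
The plan is to take for the desired space the block-subspace ${\mathcal X}_{GM}$ of ${\mathcal G \mathcal M}$ produced in Theorem \ref{goma}. By that theorem together with the remark immediately following it, ${\mathcal X}_{GM}$ is HI and sequentially minimal (and tight). Thus the only point that remains to be verified is that ${\mathcal X}_{GM}$ is locally minimal.

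For this I would first observe that local minimality is inherited by subspaces: if $X$ is locally minimal with constant $K$ and $Z\subseteq X$, then any finite dimensional $F\subseteq Z$ is a finite dimensional subspace of $X$, hence $K$-embeds into every infinite dimensional subspace of $X$, in particular into every infinite dimensional subspace of $Z$. Since ${\mathcal X}_{GM}$ is a block-subspace of ${\mathcal G \mathcal M}$, it then suffices to invoke the fact established just before the fifth dichotomy, namely that ${\mathcal G \mathcal M}$ itself is locally minimal.

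Alternatively, and essentially equivalently, one can run the argument directly inside ${\mathcal X}_{GM}$: every block subspace $Y$ of ${\mathcal X}_{GM}$ contains an SRIS, whose spreading model is $c$-equivalent to the unit vector basis of $S$ by Proposition \ref{P:5.1}; since the yardstick vectors $z_1,\dots,z_k$ of Section \ref{S:1} exhibit $\ell_\infty^k$ inside $S$ with constant arbitrarily close to $1$, it follows that for every $n$ the space $\ell_\infty^n$ is finitely representable in $Y$ with a constant depending only on ${\mathcal G \mathcal M}$ and not on $Y$. On the other hand, for each $n$ and $\epsilon>0$ there is $N=N(n,\epsilon)$ such that every $n$-dimensional normed space $(1+\epsilon)$-embeds into $\ell_\infty^N$ (take an $\epsilon$-net of the dual unit ball). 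Composing these two embeddings, and using a routine small perturbation to reduce an arbitrary finite dimensional subspace of ${\mathcal X}_{GM}$ to a block one, we obtain a constant $K$, depending only on ${\mathcal G \mathcal M}$, such that every finite dimensional subspace of ${\mathcal X}_{GM}$ $K$-embeds into every infinite dimensional subspace of ${\mathcal X}_{GM}$; that is, ${\mathcal X}_{GM}$ is locally minimal.

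The argument is essentially a bookkeeping of already-proved facts, so there is no serious obstacle; the only point that requires a little care is the uniformity of $K$: although $N=N(n,\epsilon)$ blows up with $n$, the embedding of an $n$-dimensional $F$ into $\ell_\infty^N$ still has constant at most $1+\epsilon$, and the representation of $\ell_\infty^N$ in $Y$ has a constant independent of $N$, so that $K$ can indeed be chosen independently of $\dim F$ and of the subspace $Y$. Combining this with the HI and sequential minimality already recorded completes the proof.
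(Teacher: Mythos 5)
Your proposal is correct and follows essentially the same route as the paper: the paper also deduces local minimality of ${\mathcal G \mathcal M}$ from its saturation with sequences whose spreading model is equivalent to the unit vector basis of $S$, the uniform presence of $\ell_\infty^n$'s in $S$ (via the yardstick vectors), and the universality of the $\ell_\infty^n$'s, and then combines this with the HI, sequentially minimal subspace ${\mathcal X}_{GM}$ of Theorem \ref{goma}. Your explicit remarks on heredity of local minimality and on the uniformity of the constant $K$ merely spell out steps the paper leaves implicit.
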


Since an HI space does not contain a minimal subspace, this answers  \cite[Problem 5.2]{FR2}, that is, the space ${\mathcal G \mathcal M}$ demonstrates that there are other forms of tightness than tightness by range or with constants.

The fifth dichotomy and a dichotomy due to A. Tcaciuc \cite{Tc} are used in \cite{FR} to refine the types (1)-(6) into  subclasses. In their terminology, ${\mathcal X}_{GM}$ is of type (2b).

\subsection{Open problems}

\

The most important problem which remains open in Gowers' classification program is whether there exist spaces of type (4).
Note that such a space would satisfy the criterion of Casazza, and therefore would not be isomorphic to its proper subspaces.

\begin{prob} Find a space with an unconditional basis, tight by range and quasi-minimal.
\end{prob}

The nature of the tightness of ${\mathcal X}_{GM}$ remains to be understood. This property is a consequence of the non-minimality of HI spaces and of the third dichotomy, with no information on how the sequence $(I_n)$ of subsets of $\N$ depends on the subspace $Y$.

\begin{prob} Find information on the sequences $(I_n)$ in the definition of the tightness of ${\mathcal X}_{GM}$. Is ${\mathcal G \mathcal M}$ or $GM$ itself tight? \end{prob}

C. Rosendal \cite{R} defined notions of $\alpha$-minimality and $\alpha$-tightness, where $\alpha<\omega_1$ is an ordinal. Local minimality implies that ${\mathcal X}_{GM}$ is $\omega^2$-minimal and not $\omega$-tight. On the other hand, being tight, it must be $\alpha$-tight for some $\alpha<\omega_1$, \cite[Theorem 3]{R}.

\begin{prob} Find $\min \{\alpha \in \omega_1 \del {\mathcal X}_{GM} {\rm \ is\ } \alpha{\rm-tight}\}$. \end{prob}

It is unknown whether an HI space may be tight with constants. With the exception of
the  uniformly convex HI space of \cite{F:unif},
examples of the Gowers-Maurey family usually
contain $\ell_\infty^n$'s uniformly - and therefore are locally minimal. 

\begin{prob} Find an HI space which is tight with constants.
\end{prob}

  \begin{flushleft}

{\em Address of V. Ferenczi:}\\

Departamento de Matem\'atica,\\

Instituto de Matem\'atica e Estat\' \i stica,\\

Universidade de S\~ao Paulo,\\

rua do Mat\~ao, 1010, \\

05508-090 S\~ao Paulo, SP,\\

Brazil.\\
\texttt{ferenczi@ime.usp.br}
\end{flushleft}

\

\begin{flushleft}
{\em Address of Th. Schlumprecht:}\\

Department of Mathematics,\\

Texas A\&M University,\\

College Station, Texas, 77843,

USA  \\
\texttt{schlump@math.tamu.edu}

\end{flushleft}

  \end{document}